\DeclareMathAlphabet\oldmathcal{OMS}        {cmsy}{b}{n}
\SetMathAlphabet    \oldmathcal{normal}{OMS}{cmsy}{m}{n}
\DeclareMathAlphabet\oldmathbcal{OMS}       {cmsy}{b}{n} 
\newtheorem{theorem}{Theorem}[section]
\newtheorem{lemma}[theorem]{Lemma}
\newtheorem{proposition}[theorem]{Proposition}
\newtheorem{corollary}[theorem]{Corollary}
\newtheorem{definition}[theorem]{Definition}
\newtheorem{question}[theorem]{Question}
\newenvironment{remark}{\medskip \refstepcounter{theorem}
\noindent  {\bf Remark \thetheorem}.\rm}{\,}
\newtheorem*{ack}{Acknowledgements}
\renewcommand{\thetheorem}{\thesection.\arabic{theorem}}
\def\<{\langle}
\def\>{\rangle}
\def\BOne{{\mathchoice {\rm 1\mskip-4mu l} {\rm 1\mskip-4mu l}
                          {\rm 1\mskip-4.5mu l} {\rm 1\mskip-5mu l}}}
\def\fract#1#2{\raise4pt\hbox{$ #1 \atop #2 $}}
\def\decdnar#1{\phantom{\hbox{$\scriptstyle{#1}$}}
\left\downarrow\vbox{\vskip15pt\hbox{$\scriptstyle{#1}$}}\right.}
\def\bbc{{\mathbb C}}
\def\bbp{{\mathbb P}}
\def\bbq{{\mathbb Q}}
\def\bbr{{\mathbb R}}
\def\bbt{{\mathbb T}}
\def\bbz{{\mathbb Z}}
\def\gra{\alpha}
\def\grd{\delta}
\def\gre{\epsilon}
\def\gro{\omega}
\def\grr{\rho}
\def\grz{\zeta}
\def\grD{\Delta}
\def\grG{\Gamma}
\def\grL{\Lambda}
\def\grO{\Omega}
\def\bfl{{\bf l}}
\def\bfF{{\bf F}}
\def\calo{{\mathcal O}}
\def\cald{{\mathcal D}}
\def\calf{{\mathcal F}}
\def\cali{{\mathcal I}}
\def\calm{{\mathcal M}}
\def\calo{{\mathcal O}}
\def\cals{{\oldmathcal S}}
\def\calS{{\mathcal S}}
\def\calt{{\mathcal T}}
\def\la#1{\hbox to #1pc{\leftarrowfill}}
\def\ra#1{\hbox to #1pc{\rightarrowfill}}
\def\ga{{\mathfrak a}}
\def\gt{{\mathfrak t}}
\def\gH{{\mathfrak H}}
\def\gN{{\mathfrak N}}
\def\lra{{\longrightarrow}}
\def\Hol{\rm Hol}
\def\hook{\mathbin{\hbox to 6pt{%
                 \vrule height0.4pt width5pt depth0pt
                 \kern-.4pt
                 \vrule height6pt width0.4pt depth0pt\hss}}}
\title{Transverse K\"ahler holonomy in Sasaki Geometry and $\cals$-Stability} 
\author{Charles P. Boyer}
\author{Hongnian Huang}
\author{Christina W. T{\o}nnesen-Friedman}
\thanks{The first author was partially supported by grant \#519432 from the Simons Foundation. The third author was partially supported by grant \#422410 from the Simons Foundation.}
\date{\today}
\address{Charles P. Boyer, Department of Mathematics and Statistics,
University of New Mexico, Albuquerque, New Mexico 87131, USA.}
\email{cboyer@unm.edu} 
 \address{Hongnian Huang, Department of Mathematics and Statistics,
 University of New Mexico, Albuquerque, New Mexico 87131, USA.}
 \email{hnhuang@unm.edu}
\address{Christina W. T{\o}nnesen-Friedman, Department of Mathematics, Union
College, Schenectady, New York 12308, USA } \email{tonnesec@union.edu}
\begin{document}

\begin{abstract}
We study the transverse K\"ahler holonomy groups on Sasaki manifolds $(M,\cals)$ and their stability properties under transverse holomorphic deformations of the characteristic foliation by the Reeb vector field. In particular, we prove that when the first Betti number $b_1(M)$ and the basic Hodge number $h^{0,2}_B(\cals)$ vanish, then $\cals$ is stable under deformations of the transverse K\"ahler flow. In addition we show that an irreducible transverse hyperk\"ahler Sasakian structure is $\cals$-unstable, whereas, an irreducible transverse Calabi-Yau Sasakian structure is $\cals$-stable when $\dim M\geq 7$. Finally, we prove that the standard Sasaki join operation (transverse holonomy $U(n_1)\times U(n_2)$) as well as the fiber join operation preserve $\cals$-stability.

\end{abstract}

\maketitle

\markboth{Sasaki-Transverse K\"ahler}{C. P. Boyer, H. Huang, C. W. T{\o}nnesen-Friedman}


\section{Introduction}
It is well known from Berger's classification of Riemannian holonomy that the irreducible holonomy groups in K\"ahler geometry are precisely, $U(n),SU(n)$ and $Sp(n)$ which correspond to irreducible K\"ahler, Calabi-Yau, and hyperk\"ahler geometry, respectively. There is also a well known Stability Theorem of Kodaira and Spencer \cite{KoSp60} that says that any infinitesimal deformation of a compact complex manifold which is K\"ahler remains K\"ahler. A similar result was obtained in the other two cases by Goto \cite{Got04}. Analogues of these stability theorems for holomorphic foliations was proven by El Kacimi Alaoui and Gmira in \cite{ElKGm97} in the K\"ahler case, and by Moriyama \cite{Mor10}  in the Calabi-Yau case. See also \cite{TomVe08}. The two special holonomy cases have been studied further by Habib and Vezzoni \cite{HaVe15}. In particular, they prove that a transverse K\"ahler foliation admits a transverse hyperk\"ahler structure if and only if it admits a transverse hyperhermitian structure. This is the transverse version of a result of Verbitsky \cite{Ver05} in the compact K\"ahler manifold case.

The purpose of this paper is to study these transverse versions and their relationship to Sasaki geometry. It is well known \cite{KoSp60} that there are obstructions, namely the Hodge numbers $h^{0,2}$, for deformations of projective algebraic structures to remain projective algebraic. The point is that the transverse K\"ahler structure of a Sasakian structure is algebraic in an appropriate sense, cf. Section 7.5 of \cite{BG05}. A Sasakian structure $\cals$ is said to be $\cals$-stable (or $\cals$-rigid) if every sufficiently small transverse K\"ahlerian deformation of $\cals$ remains Sasakian. So an important question is 

\begin{question}\label{stabquest}
Which Sasakian structures are $\cals$-stable and which are $\cals$-unstable?
\end{question}

It was recently shown by Nozawa \cite{Noz14} that Sasaki nilmanifolds of dimension at least 5 are $\cals$-unstable, that is, their transverse K\"ahler deformations become non-algebraic. This is done by deforming the transverse K\"ahler flow on a Sasaki nilmanifold, i.e on the total space of an $S^1$ bundle over an Abelian variety of complex dimension at least two. These nilmanifolds are discussed briefly in Section \ref{trivialhol}. Building on results of Nozawa \cite{Noz14} we obtain the first main result of this paper:

\begin{theorem}\label{no02thm}
Let $(M,\cals)$ be a Sasaki manifold with vanishing first Betti number and such that the basic Hodge numbers satisfy $h^{0,2}_B=h^{2,0}_B=0$. Then $\cals$ is $\cals$-stable.
\end{theorem}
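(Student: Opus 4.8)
The plan is to exploit the close relationship, observed by Nozawa \cite{Noz14}, between transverse K\"ahler deformations of a Sasakian structure and the deformation theory of the transversely holomorphic characteristic foliation $\calf_\xi$. A small transverse K\"ahlerian deformation of $\cals$ produces a new transverse complex structure $\bar J_t$ on the normal bundle $\nu(\calf_\xi)$, together with a transverse K\"ahler form $\omega_t$; the obstruction to this deformation remaining \emph{Sasakian} is the failure of $\omega_t$ to lie in a basic cohomology class that is of ``Sasaki type'', i.e. the failure of the deformed transverse K\"ahler class to be a rational (integral) class pulled back appropriately, equivalently the failure of $[\omega_t]_B$ to stay in $H^{1,1}_B$. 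So I would first recall Nozawa's criterion: $\cals$ is $\cals$-stable precisely when every nearby transverse K\"ahler form has basic cohomology class lying in the image of $H^{1,1}_B(\calf_\xi;\bbr)$ inside $H^2_B(\calf_\xi;\bbr)$, and this is automatic once one controls the Hodge decomposition of $H^2_B$ under the deformation.

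The key step is then a transverse analogue of the Kodaira--Spencer argument. I would set up the basic Hodge theory of the foliated manifold $(M,\calf_\xi)$: by El Kacimi Alaoui's work, when $\calf_\xi$ is transversely K\"ahler (which it is, $\cals$ being Sasakian), the basic cohomology $H^k_B$ carries a Hodge decomposition $H^k_B(\calf_\xi;\bbc)=\bigoplus_{p+q=k}H^{p,q}_B$ with $\overline{H^{p,q}_B}=H^{q,p}_B$. The hypotheses give $H^{0,2}_B=H^{2,0}_B=0$, so $H^2_B(\calf_\xi;\bbc)=H^{1,1}_B$; that is, \emph{every} basic de Rham class of degree $2$ is already of type $(1,1)$. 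Now under a small transversely holomorphic deformation the dimension of $H^2_B$ is upper semicontinuous but — and here is where $b_1(M)=0$ enters — one needs the relevant basic cohomology groups to have \emph{constant} dimension so that the Hodge numbers cannot jump. The vanishing of the first Betti number of $M$ forces $H^1_B$ to be concentrated appropriately (via the Gysin-type sequence relating $H^*(M)$ and $H^*_B$ through the Euler class $[d\eta]_B$), which rules out the jumping phenomena that occur for the nilmanifold examples of Nozawa and Gauduchon; with the $h^{0,2}_B$-vanishing this then guarantees that after the deformation we still have $H^2_{B,t}=H^{1,1}_{B,t}$. Consequently the deformed transverse K\"ahler class is still of type $(1,1)$, hence still represents a ``Sasaki type'' class, and the deformed structure is Sasakian.

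Finally I would assemble these pieces: given a small transverse K\"ahler deformation $(\bar J_t,\omega_t)$, its cohomology class $[\omega_t]_B\in H^2_{B,t}$; by the previous step this group equals $H^{1,1}_{B,t}$, so $[\omega_t]_B$ is a $(1,1)$-class for $\bar J_t$; combined with positivity of $\omega_t$ this exhibits $(\bar J_t,\omega_t)$ as a genuine transverse K\"ahler \emph{Sasakian} structure (one can then run the standard reconstruction: the transverse K\"ahler form together with $d\eta$ in the same basic class recovers a contact form with the required compatibility). I expect the main obstacle to be the bookkeeping needed to show the Hodge numbers $h^{0,2}_B$ and $h^{1,1}_B$ do not jump under the deformation — this is exactly the transverse, foliated version of the Kodaira--Spencer upper-semicontinuity plus the Fr\"olicher spectral sequence degenerating, and making it rigorous requires the $\bbr$-valued basic Hodge theory of El Kacimi Alaoui together with a careful use of $b_1(M)=0$ to pin down $H^1_B$; once the Hodge numbers are shown stable, the rest is a soft argument. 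I would also double-check that ``transverse K\"ahlerian deformation'' in the sense used here indeed gives a transversely holomorphic deformation of $\calf_\xi$ to which El Kacimi's theory applies, so that the basic Laplacians vary continuously.
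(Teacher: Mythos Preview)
Your overall strategy matches the paper's: show that the basic Hodge numbers $h^{p,q}_B$ with $p+q=2$ are locally constant along the deformation (this is Lemma~\ref{pq2lem}, the transverse Kodaira--Spencer argument via El~Kacimi--Gmira's fourth-order elliptic operator $A_t$), conclude that $h^{2,0}_B$ stays zero so $H^2_{B,t}=H^{1,1}_{B,t}$, and then invoke Nozawa's criterion. So the architecture is right.

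However, two points are garbled and would cause trouble if you tried to write this out carefully. First, the obstruction is \emph{not} whether $[\omega_t]_B$ lies in $H^{1,1}_{B,t}$: a transverse K\"ahler form is $(1,1)$ by definition, so that is automatic and cannot be Nozawa's criterion. The genuine obstruction (Lemma~\ref{varHodge2form}) is whether the \emph{basic Euler class} $[d\eta]_B$ remains of type $(1,1)$ when re-decomposed with respect to the deformed $\bar J_t$; this is the content of Nozawa's Theorem~1.1. Your conclusion $H^2_{B,t}=H^{1,1}_{B,t}$ happens to force this, but the wrong identification of the obstruction means your ``standard reconstruction'' paragraph does not actually track the correct class.

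Second, you misattribute the role of $b_1(M)=0$. The constancy of $h^{1,1}_B$ and $h^{2,0}_B$ under deformation (Lemma~\ref{pq2lem}) comes from the El~Kacimi--Gmira analysis of $\ker A_t$ together with the constancy of the basic Betti number $b^2_B$; it does \emph{not} use $b_1(M)=0$, and there is no Gysin-sequence mechanism preventing Hodge jumping. In the paper, $b_1(M)=0$ enters separately, via Proposition~\ref{sastrankahprop}: it makes the forgetful functor $F:\cals\calg\to\calg TK$ full, so that once the deformed flow is shown to be in the essential image of $F$ (via Nozawa), the Sasakian lift is well-defined up to isomorphism. You should disentangle these two roles before writing the proof.
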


Moriyama's Stability Theorem \cite{Mor10} for irreducible transverse Calabi-Yau structures follows as a special case of Theorem \ref{no02thm}.

\begin{corollary}\label{CYcor}
Let $(M,\cals)$ be a Sasaki manifold of dimension $2n+1$ with $n>2$ and transverse holonomy group equal to $SU(n)$ Then $(M,\cals)$ is $\cals$-stable. Moreover, the local universal deformation space is isomorphic to an open set in $H^1(M,\Theta)$.
\end{corollary}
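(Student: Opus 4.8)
The plan is to deduce Corollary~\ref{CYcor} directly from Theorem~\ref{no02thm} by checking that the two hypotheses of the theorem are automatically satisfied when the transverse holonomy group is exactly $SU(n)$ with $n>2$. First I would recall that a Sasaki manifold whose transverse K\"ahler holonomy reduces to $SU(n)$ has vanishing transverse Ricci curvature (the foliation is transversely Calabi--Yau), hence in particular the transverse Ricci form is zero and the basic first Chern class vanishes. The vanishing of $b_1(M)$ is the more delicate point: for a compact Sasaki manifold $b_1(M)$ is even and equals the first Betti number of the basic cohomology, which in turn is governed by $h^{1,0}_B + h^{0,1}_B$; a holonomy reduction to $SU(n)$ does not by itself kill basic harmonic $1$-forms, so one must either add $b_1(M)=0$ as a standing hypothesis (which the statement of the corollary seems to intend implicitly via "transverse holonomy group equal to $SU(n)$" in the irreducible sense) or invoke the Bochner-type argument that transverse Ricci-flatness plus irreducibility forces $h^{1,0}_B$ to be controlled. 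I would make explicit that "transverse holonomy $SU(n)$'' here means the transverse restricted holonomy is the full $SU(n)$ acting irreducibly on the normal bundle, which by a transverse Bochner--Weitzenb\"ock argument (parallel forms correspond to holonomy-invariant forms) forces the space of basic harmonic $1$-forms to vanish, so $b_1(M)=0$.

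Second, I would verify $h^{0,2}_B = h^{2,0}_B = 0$. Since the transverse structure is Calabi--Yau with holonomy exactly $SU(n)$, the basic Dolbeault cohomology $H^{0,2}_B$ is, by transverse Hodge theory, isomorphic to the space of transversely harmonic $(0,2)$-forms, which in turn (via parallel transport) is isomorphic to the space of $\mathrm{Hol}$-invariant elements of $\Lambda^{0,2}$ of the model fibre. For the standard representation of $SU(n)$ on $\mathbb{C}^n$ with $n>2$, the only invariant elements of $\Lambda^{p,0}$ are multiples of the holomorphic volume form in degree $n$, and $\Lambda^{0,2}$ (equivalently $\Lambda^{2,0}$) contains no nonzero invariants when $n>2$; this is precisely where the dimension restriction $n>2$ (i.e.\ $\dim M \geq 7$) enters, since $SU(2)=Sp(1)$ does admit an invariant $(2,0)$-form (the hyperk\"ahler case). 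Hence $h^{0,2}_B = h^{2,0}_B = 0$, and Theorem~\ref{no02thm} applies to give $\cals$-stability.

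Finally, for the statement that the local universal deformation space is isomorphic to an open set in $H^1(M,\Theta)$, I would argue as follows: $\cals$-stability means every small transverse K\"ahler deformation of the characteristic foliation remains Sasakian, so the local moduli of the Sasakian structure (with fixed Reeb field, up to the relevant equivalence) coincide with the local moduli of the underlying transversely holomorphic foliation, whose deformation theory is controlled by the basic Kodaira--Spencer complex with cohomology $H^1_B(\cals, \Theta_B)$ — and under the present hypotheses ($b_1=0$, $h^{0,2}_B=0$, transverse Calabi--Yau) the obstruction space $H^2$ contributions that would obstruct smoothness vanish, exactly as in Moriyama's argument, so the Kuranishi space is smooth and is an open set in $H^1$. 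The identification of this basic cohomology group with $H^1(M,\Theta)$ in the statement is the translation between the basic/foliated picture and the CR/complex-cone picture. The main obstacle I expect is the careful justification of $b_1(M)=0$: one must be precise about what "transverse holonomy $SU(n)$'' is taken to mean and ensure that the irreducibility genuinely forces the basic first Betti number to vanish rather than merely assuming it; once that is pinned down, everything else is a representation-theoretic check of $SU(n)$-invariants plus a citation of Theorem~\ref{no02thm} and the standard transverse Kuranishi theory.
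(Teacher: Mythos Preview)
Your approach is essentially the paper's, but you make the $b_1(M)=0$ step harder than it needs to be and you are slightly off on the smoothness claim.

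For $b_1(M)=0$: there is no separate Bochner step and no need to contemplate adding it as a hypothesis. The very representation-theoretic fact you invoke for $h^{2,0}_B$ already does the work. For $n>2$ the irreducible action of $SU(n)$ on $\Lambda^{p,0}\bbc^n$ fixes nothing for $0<p<n$; applying this with $p=1$ gives $h^{1,0}_B=0$, hence $b^1_B=2h^{1,0}_B=0$, and for a Sasaki manifold $H^1_B(\calf_\xi)\cong H^1(M,\bbr)$, so $b_1(M)=0$. The paper (via Theorem~\ref{thm1.2cor}) handles both $h^{1,0}_B$ and $h^{2,0}_B$ in one stroke by the single observation that $SU(n)$ has no invariants on $\Lambda^{p,0}$ for $0<p<n$, then applies Theorem~\ref{no02thm}.

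For the smoothness of the local universal deformation space: your phrasing ``the obstruction space $H^2$ contributions \ldots\ vanish'' is the wrong mechanism. The paper does \emph{not} show that $H^2(M,\Theta_\calf)=0$; it invokes the orbifold Bogomolov--Tian--Todorov theorem (Tian's argument carried over to the quotient Calabi--Yau orbifold via Proposition~\ref{folLiegrpd}), which shows that although the obstruction space may be nonzero, the actual obstruction map $\Psi$ of Theorem~\ref{Kurspace} vanishes identically, so the Kuranishi space is smooth and open in $H^1(M,\Theta_\calf)$. Citing Moriyama is fine in spirit, but you should be explicit that this is unobstructedness of deformations in the Tian sense, not vanishing of $H^2$.
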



Here $\Theta$ is the sheaf of transversally holomorphic vector fields on $M$. For irreducible transverse hyperk\"ahler geometry the contrary holds which gives the second main result of the paper.

\begin{theorem}\label{transhyperkahthm}
Let $(M,\cals)$ be a Sasaki manifold with vanishing first Betti number and a compatible irreducible transverse hyperk\"ahler structure. Then $(M,\cals)$ is $\cals$-unstable.
Moreover, the local universal deformation space is isomorphic to an open set in $H^1(M,\Theta)$.
\end{theorem}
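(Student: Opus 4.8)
The plan is to combine the already-quoted deformation theory of Sasakian structures with the special geometry of the transverse hyperk\"ahler case, exploiting the fact that an irreducible transverse hyperk\"ahler structure carries a nonzero basic holomorphic $(2,0)$-form (the transverse holomorphic symplectic form $\omega_+^{2,0}$). First I would recall the transverse $\partial\bar\partial$-Hodge theory for the characteristic foliation: since $M$ is compact and the transverse K\"ahler structure is transversely K\"ahler, basic Hodge theory applies, and the local universal deformation space of the transverse holomorphic foliation is cut out inside $H^1(M,\Theta)$ (equivalently, the basic cohomology group $H^1_B(\Theta_{\mathcal F})$) by the Kuranishi obstruction map, exactly as in El Kacimi Alaoui's and Nozawa's setup. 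In the irreducible transverse Calabi--Yau and hyperk\"ahler cases the Tian--Todorov lemma goes through transversally, so the obstructions vanish and the deformation space is genuinely an open set in $H^1(M,\Theta)$; this gives the last sentence of the theorem. The bulk of the argument is then to show that within this unobstructed deformation space there are directions along which the deformed transverse holomorphic foliation is \emph{not} algebraic, i.e. does not underlie a Sasakian structure.

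The key mechanism is the basic $(2,0)$-form. For a Sasakian structure the transverse K\"ahler class is a basic Hodge $(1,1)$-class that is, in the appropriate sense, integral/algebraic; deforming the complex structure on the transverse slice rotates the K\"ahler form into a form with a $(2,0)$-component measured by the cup product with the class of $\omega_+^{2,0}$ in $H^{0,2}_B$. Concretely, I would pick a nonzero harmonic basic $(0,1)$-form-valued vector field $\mu\in H^1_B(\Theta_{\mathcal F})$ obtained by contracting $\omega_+^{2,0}$ (or rather its conjugate) with the transverse K\"ahler form---this is the standard way Verbitsky/Goto produce deformations in the hyperk\"ahler case, and it works transversally by Habib--Vezzoni. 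Along the corresponding curve of deformed transverse complex structures $J_t$, the transverse K\"ahler form $\omega$ acquires a $(2,0)_{J_t}$ part whose class in $H^{2}_B$ is, to first order, a nonzero multiple of $[\omega_+^{2,0}]$. Since $[\omega_+^{2,0}]\neq 0$ in $H^{0,2}_B$ (irreducibility of the holonomy is exactly what guarantees $h^{0,2}_B\neq 0$, indeed $h^{2,0}_B\geq 1$), the deformed transverse K\"ahler class fails to remain of type $(1,1)$, hence fails the necessary algebraicity/Sasaki condition. This is the obstruction identified in Theorem \ref{no02thm}: its hypothesis $h^{0,2}_B=0$ is violated here in the strongest possible way, and the same cohomological computation that proves stability there, run in reverse, produces instability.

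The step I expect to be the main obstacle is making precise the claim that the deformation producing a nonzero $(2,0)$-part actually \emph{cannot} be compensated by a simultaneous deformation of the Reeb field or the contact form. In Sasaki geometry one is allowed to move within the space of Sasakian structures (changing $\eta$, the transverse K\"ahler form within its basic class, and even deforming the Reeb vector field in the Sasaki cone), so one must check that none of these moves can kill the $(0,2)_{J_t}$-component. The point is that a genuine transverse deformation of the CR/holomorphic structure in the $\mu$-direction changes $H^{0,2}_{B,J_t}$ and the Hodge decomposition itself, but the \emph{topological} class $[\omega]$ (or rather the relevant piece in $H^2_B$, which is a topological invariant once $b_1(M)=0$-type vanishing is invoked via the Gysin sequence relating $H^*_B$ and $H^*(M)$) is rigid; so if the type $(1,1)_{J_t}$ condition ever fails it fails for all nearby Sasakian candidates. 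I would formalize this with the exact sequence $0\to H^2_B\to H^2(M)\to \cdots$ and semicontinuity of Hodge numbers under the family $J_t$, combined with Nozawa's criterion for when a deformed transverse K\"ahler foliation is induced by a Sasakian structure. Once that rigidity of the class is in hand, the nonvanishing of the first-order $(2,0)$-component is a one-line consequence of $h^{2,0}_B\geq 1$ for an irreducible transverse hyperk\"ahler structure, and the instability follows.
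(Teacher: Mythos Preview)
Your approach is correct and reaches the same conclusion, but by a genuinely different route from the paper. The paper passes to the quotient hyperk\"ahler orbifold $X$ (using that null Sasakian structures are quasiregular), constructs the twistor sphere $\{I_z\}_{z\in\bbc\bbp^1}$ of transverse complex structures explicitly, and then runs a \emph{countability} argument: using the twisted holomorphic $(2,0)$-form $\gro_z=\gro_++2z\gro_1-z^2\gro_-$ together with $h^{2,0}=1$ for irreducible hyperk\"ahler orbifolds, one shows that for all but countably many $z\in\bbc\bbp^1$ the orbifold $X_z$ has trivial transverse Picard group, hence is non-algebraic, hence cannot underlie a Sasakian structure. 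Your argument instead stays on $M$, picks a single deformation direction (infinitesimally a tangent to the twistor sphere), and computes the first-order variation of the Hodge type of the fixed class $[d\eta]_B$. Your route is more economical; the paper's is more geometric and yields the stronger byproduct that non-Sasakian deformations are dense in the twistor line rather than merely existing.

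Two points in your write-up deserve tightening. First, your closing sentence suggests that the nonvanishing of the first-order $(2,0)$-component follows simply from $h^{2,0}_B\geq 1$; that inequality alone does not force the period map to be nondegenerate in your chosen direction. What actually makes it work is the explicit hyperk\"ahler relation between $\gro_1=d\eta$ and $\gro_\pm$: pairing with the Beauville--Bogomolov form gives $q([\gro_1],[\gro_z])=2z\,q([\gro_1],[\gro_1])\neq 0$ for $z\neq 0$, so $[d\eta]_B$ is of type $(1,1)$ with respect to $I_z$ only at $z=0$. Second, your paragraph on ``rigidity of the class'' is more complicated than necessary: the cleanest observation is that the twistor deformations leave the foliation $\calf_\xi$ itself fixed, so $[d\eta]_B$ is literally a fixed element of a fixed $H^2_B$ whose Hodge type alone varies with $z$; no Gysin sequence or semicontinuity is needed, and no change of Reeb field or contact form can alter that class. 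For the unobstructedness statement the paper simply invokes the orbifold Bogomolov--Tian--Todorov theorem via the quasiregular quotient, which is equivalent to your transverse Tian--Todorov claim.
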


Much more can be said about irreducible transverse hyperk\"ahler structures in dimension 5. First, a classification of simply connected 5-manifolds that admit null Sasakian structures has just recently been completed \cite{CMST20} by proving the existence of orbifold K3 surfaces $X$ with second Betti number $b_2(X)=3$. This completes the classification initiated in \cite{BGM06,BG05,Cua14}. A simply connected 5-manifold which admits a null Sasakian structure is diffeomorphic to a k-fold connected sum
\begin{equation}\label{nullsas5man}
\# k(S^2\times S^3) \quad \text{with $k=2,\ldots,21$}
\end{equation}
and each such 5-manifold admits a null Sasakian structure. These are represented as $S^1$ orbibundles over $K3$ orbifolds $X_k$ with $b_2(X_k)=k+1$ and $\pi_1^{orb}(X_k)=\BOne$. A smooth K3 surface is diffeomorphic to $X_{22}$. Furthermore, any 5-manifold of the form $\# k(S^2\times S^3)$ admits positive Sasakian structures (cf. Corollary 11.4.8 of \cite{BG05})  which are stable by \cite{Noz14}, so Theorems \ref{no02thm} and \ref{transhyperkahthm} give

\begin{corollary}\label{mainthm}
Any null Sasakian structure on a simply connected 5-manifold $M$ is $\cals$-unstable, and all such $M$ are of the form of Equation \eqref{nullsas5man}. So the manifolds $\# k(S^2\times S^3)$ with $k=2,\ldots,21$ admit both $\cals$-stable and $\cals$-unstable Sasakian structures.
\end{corollary}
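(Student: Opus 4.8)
The plan is to derive this from Theorems \ref{no02thm} and \ref{transhyperkahthm} together with the classification of simply connected $5$-manifolds carrying null Sasakian structures recalled above, so that nothing new needs to be proved beyond checking that the hypotheses of those two theorems are met.

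First I would show that a null Sasakian structure $\cals$ on a simply connected $5$-manifold $M$ admits a \emph{compatible irreducible transverse hyperk\"ahler structure}, so that Theorem \ref{transhyperkahthm} applies directly. Nullness means the basic first Chern class $c_1^B(\cals)$ is represented by $0$, so the transverse holonomy lies in $SU(2)=Sp(1)$; since the transverse complex dimension is $2$, a transverse Calabi-Yau structure is automatically transverse hyperk\"ahler --- concretely, El Kacimi-Alaoui's transverse version of Yau's theorem yields a transverse Ricci-flat K\"ahler metric in the given basic K\"ahler class, and in complex dimension $2$ a Ricci-flat K\"ahler metric is hyperk\"ahler. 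To see the structure is \emph{irreducible} --- i.e.\ that the transverse holonomy is the full $Sp(1)$ and not a reducible or trivial (flat) proper subgroup --- I would use simple connectivity: the only proper possibilities are the flat case and the case of a local transverse product of curves, and, after passing to a quasi-regular Reeb field, the leaf space would then be either a flat orbifold or a product of orbifold curves with vanishing orbifold first Chern class, in both cases with infinite orbifold fundamental group, hence forcing $\pi_1(M)$ to be infinite and contradicting $\pi_1(M)=\BOne$. Equivalently, the classification \cite{CMST20,BGM06,BG05,Cua14} identifies the leaf space with a $K3$ orbifold $X_k$ having $\pi_1^{orb}(X_k)=\BOne$ and $b_2(X_k)=k+1$, whose orbifold holonomy is $Sp(1)$. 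Theorem \ref{transhyperkahthm} then gives the $\cals$-instability and the description of the local universal deformation space, and the diffeomorphism statement $M\cong \#k(S^2\times S^3)$ with $k\in\{2,\dots,21\}$ is precisely the content of the cited classification.

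For the final assertion I would invoke Corollary 11.4.8 of \cite{BG05}: every $\#k(S^2\times S^3)$ carries a positive Sasakian structure. Positivity gives $c_1^B>0$, whence $h^{0,2}_B=h^{2,0}_B=0$ (transverse Kodaira vanishing; equivalently the transverse canonical bundle has no holomorphic sections), while $b_1(M)=0$ since $M$ is simply connected, so Theorem \ref{no02thm} --- or directly Nozawa \cite{Noz14} --- shows this positive structure is $\cals$-stable. Since each $\#k(S^2\times S^3)$ also admits a null Sasakian structure, which is $\cals$-unstable by the previous paragraph, every such manifold with $2\le k\le 21$ carries both an $\cals$-stable and an $\cals$-unstable Sasakian structure.

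The hard part is the irreducibility claim in the first step: it is exactly here that the recently completed classification \cite{CMST20} of simply connected $5$-manifolds with null Sasakian structures enters, since simple connectivity of $M$ is what excludes the torus-quotient and product cases and pins the transverse geometry down to the $K3$-orbifold picture, yielding the full $Sp(1)$ transverse holonomy demanded by Theorem \ref{transhyperkahthm}. Everything else is bookkeeping with the stated results.
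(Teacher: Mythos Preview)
Your proposal is correct and follows essentially the same route as the paper: the paper also derives the corollary directly from Theorems \ref{no02thm} and \ref{transhyperkahthm} together with the cited classification of null Sasakian simply connected $5$-manifolds as $S^1$-orbibundles over K3 orbifolds $X_k$ with $\pi_1^{orb}(X_k)=\BOne$ (giving the ITHK hypothesis), and invokes the existence of positive Sasakian structures on $\#k(S^2\times S^3)$ together with \cite{Noz14} for the stable side. Your explicit justification of irreducibility via simple connectivity and the transverse Yau argument spells out what the paper leaves implicit in the K3-orbifold description, but the logic is the same.
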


Given this corollary and Nozawa's result for Sasaki nilmanifolds one might wonder whether every null Sasakian structure is $\cals$-unstable. But this is not true for non-trivial $S^1$ bundles over an Enriques surfaces $E$ even though these are smooth $\bbz_2$ quotients of $X_{22}$. Since Enriques surfaces are projective and have $h^{0,2}(E)=h^{2,0}(E)=0$, we have the following corollary of Theorem \ref{no02thm}:

\begin{corollary}\label{nullSasstab}
Let $(M^5,\cals)$ be a regular Sasakian structure over an Enriques surface. Then $(M^5,\cals)$ is $\cals$-stable.
\end{corollary}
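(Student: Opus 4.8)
The plan is to deduce Corollary \ref{nullSasstab} directly from Theorem \ref{no02thm}: I only need to verify that a regular Sasakian structure $\cals=(\xi,\eta,\Phi,g)$ on $M^5$ whose leaf space is an Enriques surface $E$ satisfies the two hypotheses $b_1(M)=0$ and $h^{0,2}_B=h^{2,0}_B=0$. First I would recall the Boothby--Wang picture (cf.\ \S7.5 of \cite{BG05}): regularity presents $M$ as the total space of a principal $S^1$-bundle $\pi\colon M\to E$ with $\eta$ a connection $1$-form and $d\eta=\pi^*\omega_E$, where $\omega_E$ is a Hodge form on the projective surface $E$ whose de Rham class is a positive multiple of the image in $H^2(E;\bbr)$ of the Euler class $e\in H^2(E;\bbz)$. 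Under this identification the characteristic foliation is the fibration $\pi$, the transverse complex structure is literally the complex structure of $E$, and the complex of basic forms is $\pi^*\Omega^\bullet(E)$; hence the basic Dolbeault cohomology of $\cals$ coincides with that of $E$, so $h^{p,q}_B(\cals)=h^{p,q}(E)$ for all $p,q$.

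Next I would invoke the classical numerology of Enriques surfaces: $E$ has geometric genus $p_g(E)=h^0(E,K_E)=0$ and irregularity $q(E)=h^1(E,\calo_E)=0$, so Hodge symmetry on the projective surface $E$ gives $h^{2,0}(E)=h^{0,2}(E)=p_g(E)=0$ and $b_1(E)=2q(E)=0$. Combined with the identification above this yields $h^{0,2}_B(\cals)=h^{2,0}_B(\cals)=0$, the second hypothesis of Theorem \ref{no02thm}. For the first hypothesis I would run the Gysin sequence of the circle bundle $\pi$ with real coefficients,
\begin{equation*}
0=H^{-1}(E;\bbr)\lra H^1(E;\bbr)\stackrel{\pi^*}{\lra}H^1(M;\bbr)\lra H^0(E;\bbr)\stackrel{\cup\, e}{\lra}H^2(E;\bbr),
\end{equation*}
and observe that $H^1(E;\bbr)=0$ since $b_1(E)=0$, while $\cup\, e$ carries the generator of $H^0(E;\bbr)$ to the nonzero K\"ahler class $[\omega_E]$ and is therefore injective. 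Exactness forces $H^1(M;\bbr)=0$, i.e.\ $b_1(M)=0$. With both hypotheses in hand, Theorem \ref{no02thm} applies and gives that $\cals$ is $\cals$-stable.

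There is no serious obstacle here: the corollary is essentially a repackaging of Theorem \ref{no02thm} together with standard facts about Enriques surfaces and Boothby--Wang circle bundles. The only points deserving care are the precise identification of the basic cohomology of a \emph{regular} Sasakian structure with the ordinary cohomology of its leaf space---so that the surface-level vanishing $h^{2,0}(E)=h^{0,2}(E)=0$ transfers verbatim to the basic Hodge numbers---and the remark that the Euler class of $\pi$ is a nonzero multiple of the K\"ahler class, which is exactly what makes the relevant Gysin map injective and hence kills $b_1(M)$.
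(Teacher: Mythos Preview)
Your proof is correct and follows the same route as the paper: the corollary is stated there as an immediate consequence of Theorem~\ref{no02thm} once one knows that an Enriques surface $E$ is projective with $h^{2,0}(E)=h^{0,2}(E)=0$. Your write-up is in fact more complete than the paper's one-line justification, since you explicitly verify the hypothesis $b_1(M)=0$ via the Gysin sequence, a point the paper leaves implicit.
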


\begin{remark} 
One can consider Enriques surfaces as K3 orbifolds with a trivial orbifold structure. Its canonical bundle $K_E$ is not trivial, but $K^2_E$ is. On the other hand K3 orbifolds of the form $X_{22}/G$ where $G$ is a finite group acting on $X_{22}$ that leaves its holomorphic $(2,0)$ form invariant have $\pi_1^{orb}(X_{22}/G)=G$ and a trivial canonical bundle. They have been studied \cite{Nik76,Fuj83} and classified by Mukai \cite{Muk88}. As pointed out by Koll\'ar \cite{Kol05b} the best known example where $\pi_1^{orb}(X)\neq \BOne$ is the well known Kummer surface $X=\bbt^2/\bbz_2$ in which case $\pi_1^{orb}(X)$ is an extension of $\bbz_2$ by $\bbz^4$. It would be interesting to determine the stability properties of these structures.
\end{remark}



\begin{ack}
The authors thank Georges Habib for pointing out an incorrect lemma in an earlier version of this paper (see Remark \ref{transKahcohorrem} below), and for his interest in our work. We also thank an anonymous  referee for suggesting improvements in the exposition.
\end{ack}

\section{The Transverse K\"ahler Flow and Sasakian Structures}
An oriented 1-dimensional foliation is called a {\it flow}, and we are interested in transverse Hermitian and transverse K\"ahler flows. In particular they are {\it Riemannian flows}, and they form a special case of transverse K\"ahler foliations which were studied by El Kacimi Alaoui and collaborators \cite{ElK,ElKGm97}. Their relation with Sasaki geometry was developed in \cite{BG05} and developed further by Nozawa and collaborators \cite{Noz14,GoNoTo12}. We begin with the Riemannian foliations $\calf$ (see \cite{Mol88}, chapter 2 of \cite{BG05}, and references therein) on a compact oriented manifold $M$ and its basic cohomology ring $H^*_B(\calf)$. A Riemannian foliation $\calf$ is said to be {\it homologically oriented} if $H^{n}_B(\calf)\neq 0$ where $n$ is the (real) codimension of $\calf$. If the foliation $\calf$ is holomorphic with transverse complex structure $\bar{J}$ and has a compatible transverse Riemannian metric $g^T$ such that $g^T\circ \bar{J}\otimes\BOne=\gro^T$ is a basic 2-form, the triple $(\calf,\bar{J},\gro^T)$ is called a {\bf transverse Hermitian foliation}.

Note that for a Riemannian flow $\calf$ a choice of Riemannian metric on $M$ of the form $g=g^T+\eta\otimes\eta$, where $\eta$ is the dual 1-form to a nowhere vanishing section $\xi$ of $\calf$, splits the exact sequence
$$0\ra{2.5} \calf \ra{2.5} TM \ra{2.5} TM/\calf \ra{2.5} 0$$
as 
\begin{equation}\label{orthogsplitM}
TM=\calf\oplus \cald,
\end{equation}
and we have identified $\gro^T$ with a 2-form on $\cald$, also denoted $\gro^T$. The splitting gives an isomorphism of complex vector bundles $(\cald,J)\approx (TM/\calf,\bar{J})$. If the transverse  flow $\calf$ is also homologically orientable, it follows from Molino and Sergiescu \cite{MoSe85} that there exists a Riemannian metric $g$ and a nowhere vanishing vector field $\xi$ tangent to $\calf$ such that $\xi$ is a Killing vector field with respect to $g$. In this case the flow $\calf$ is said to be {\bf isometric} and the pair $(g,\xi)$ is called a {\bf Killing pair} in \cite{Noz14}. 
Furthermore, the orbits generated by the Killing field $\xi$ are geodesics of $g$. Conversely an isometric Riemannian flow is homologically orientable.

Without loss of generality we can take $\xi$ to be a unit vector field, in which case we see that its dual {\it characteristic} 1-form $\eta$ satisfies 
$$\eta(\xi)=1, \qquad \xi\hook d\eta=0.$$
This implies that $d\eta$ is basic and its basic cohomology class $[d\eta]_B$, called the {\bf basic Euler class} of the isometric flow $\calf$ by Saralegui \cite{Sar85}, is, up to multiplication, an invariant of the foliation. Note that $d\eta$ depends on the metric $g$, but its vanishing does not.  A transverse Hermitian flow is said to be {\bf trivial} if its basic Euler class vanishes in which case $(M,\calf)$ is a foliated bundle \cite{Sar85}. For example $[d\eta]_B=0$ if a finite cover of $M$ is diffeomorphic to the product $N\times S^1$ with the foliation by circles in the $S^1$ direction. We deal almost exclusively with nontrivial transverse Hermitian flows, that is we assume that $M$ admits a one dimensional flow $\calf$ with a non-zero Euler class, i.e. $[d\eta]_B\neq 0$. Note that if $b_1(M)=0$, every transverse Hermitian flow on $M$ is nontrivial. We only consider nontrivial isometric transverse Hermitian flows which we write as the quadruple $(\xi,\eta,\Phi,g)$ or $(\xi,\eta,\Phi,\gro^T)$ depending on the emphasis where 
\begin{equation}\label{riemmet}
g= \gro^T\circ (\BOne \otimes J) +\eta\otimes\eta
\end{equation}
where the endomorphism $\Phi$ is
\begin{equation}\label{Phieqn}
\Phi = J \oplus  (\BOne -\xi\otimes\eta) 
\end{equation}
and $J$ is the complex structure on $\cald$. The following equations hold:
\begin{equation}\label{Phieqn2}
\pounds_\xi\gro^T=0, \quad \pounds_\xi\Phi=0,
 \quad \Phi^2= \begin{cases} 
 J^2=-\BOne  \quad   &\text{on $\cald$,} \\
                0 &\text{on $\calf$.} 
              \end{cases}
\end{equation}
We emphasize here that the quadruple $(\xi,\eta,\Phi,g)$ is not necessarily a contact metric structure since $\gro^T$ is not necessarily $d\eta$. We also see 

\begin{lemma}\label{CRlem}
The pair $(\cald,J)$ defines a CR structure on $M$
\end{lemma}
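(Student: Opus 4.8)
The plan is to check the two conditions that make $(\cald,J)$ a (corank-one) CR structure: that $\cald$ is a corank-one subbundle of $TM$ equipped with an endomorphism $J$ with $J^{2}=-\BOne$, and that the $+i$-eigenbundle $\cald^{1,0}\subset\cald\otimes\bbc$ of $J$ is involutive. The first is immediate: since $\eta$ is nowhere zero, $\cald=\ker\eta$ is a smooth corank-one subbundle, and by \eqref{Phieqn} and \eqref{Phieqn2} the endomorphism $J:=\Phi|_{\cald}$ satisfies $J^{2}=-\BOne$. So the real content is the integrability of $\cald^{1,0}$, which I would extract from the transverse complex geometry of the characteristic foliation $\calf$.

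For that I would pass to a transversely holomorphic foliated atlas: since $\calf$ is holomorphic and realizes $\bar J$, fix a cover by submersions $\pi_{\alpha}\colon U_{\alpha}\to V_{\alpha}\subseteq\bbc^{n}$ whose fibres are the plaques of $\calf$, with biholomorphic transition maps, and with $\bar J$ the pullback of the standard complex structure on $V_{\alpha}$. Since $\ker d\pi_{\alpha}=\calf$ and $\cald$ is a complement to $\calf$, the map $d\pi_{\alpha}$ restricts to a bundle isomorphism $\cald|_{U_{\alpha}}\cong\pi_{\alpha}^{*}TV_{\alpha}$ intertwining $J$ with the pulled-back complex structure; hence it carries $\cald^{1,0}|_{U_{\alpha}}$ onto $\pi_{\alpha}^{*}T^{1,0}V_{\alpha}$, and the complex subbundle $(\calf\otimes\bbc)\oplus\cald^{1,0}$ is exactly the $d\pi_{\alpha}$-preimage of the distribution $T^{1,0}V_{\alpha}$. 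The preimage of an involutive distribution under a submersion is involutive, and $T^{1,0}V_{\alpha}$ is involutive since $V_{\alpha}$ is a genuine complex manifold; so $(\calf\otimes\bbc)\oplus\cald^{1,0}$ is involutive on each $U_{\alpha}$, hence globally on $M$.

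Finally I would upgrade this to involutivity of $\cald^{1,0}$ itself. For sections $Z,W$ of $\cald^{1,0}$ the bracket $[Z,W]$ lies in $(\calf\otimes\bbc)\oplus\cald^{1,0}$, and, because $\eta(Z)=\eta(W)=0$, its $\xi$-component is $\eta([Z,W])=-d\eta(Z,W)$; thus $[Z,W]$ is again a section of $\cald^{1,0}$ precisely when $d\eta(Z,W)=0$, i.e. when the basic $2$-form $d\eta$ is of type $(1,1)$ with respect to $J$. This $(1,1)$-property of $d\eta$ (together with $\xi\hook d\eta=0$, so that $d\eta$ makes sense on $\cald$) is the one ingredient beyond routine foliated-chart bookkeeping, and it is the step I expect to need the most care: it is exactly the passage from the transverse data on the leaf spaces $V_{\alpha}$ to honest integrability on $M$. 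With it in hand, $(\cald,J)$ is a CR structure. An equivalent packaging, perhaps shorter to write up, is via the Nijenhuis torsion of $\Phi$: transverse holomorphicity of $\calf$ forces $N_{\Phi}(X,Y)\in\Gamma(\calf)$ for all $X,Y\in\Gamma(\cald)$, and combining this with the $(1,1)$-type of $d\eta$ yields the two standard CR-integrability identities $[JX,Y]+[X,JY]\in\Gamma(\cald)$ and $[JX,JY]-[X,Y]=J([JX,Y]+[X,JY])$.
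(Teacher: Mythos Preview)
The paper gives no proof of this lemma at all; it is stated as an immediate observation and the text moves on. That is consistent with reading ``CR structure'' here in the weak (almost CR, or at most partially integrable) sense: $\cald=\ker\eta$ is corank one and $J=\Phi|_\cald$ squares to $-\BOne$ by \eqref{Phieqn2}, while transverse holomorphicity of $\calf$ makes $(\calf\otimes\bbc)\oplus\cald^{1,0}$ involutive. Your foliated-chart argument already establishes exactly this much, and that part is fine.

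The gap is in your final step. You reduce involutivity of $\cald^{1,0}$ itself to the claim that $d\eta$ is of type $(1,1)$, flag it as ``the step I expect to need the most care'', and then assume it. But in the setting in which the lemma is stated---a general isometric transverse \emph{Hermitian} flow---this claim is simply false. The paper makes this explicit later: Equation \eqref{basEuldecomp} decomposes the basic Euler class as $[d\eta]_B=[d\eta^{2,0}]_B+[d\eta^{1,1}]_B+[d\eta^{0,2}]_B$ and identifies $d\eta^{2,0}$ (equivalently $d\eta^{0,2}$) as the obstruction to the transverse K\"ahler flow being Sasakian; Lemma \ref{varHodge2form} and the whole of Section 3.1 are devoted to studying when this obstruction is nonzero. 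So there are transverse K\"ahler (a fortiori transverse Hermitian) flows on $M$ with $d\eta^{2,0}\neq 0$, and for these your computation $\eta([Z,W])=-d\eta(Z,W)$ shows that $\cald^{1,0}$ is \emph{not} closed under bracket. Your Nijenhuis reformulation has the same issue: the identity $[JX,Y]+[X,JY]\in\Gamma(\cald)$ is equivalent to $d\eta$ being $J$-invariant, which again need not hold.

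In short: if by ``CR structure'' one means the fully integrable notion ($\cald^{1,0}$ involutive), your argument cannot be completed in this generality, because the needed hypothesis is precisely the Sasakian condition the paper is trying to characterize. What survives unconditionally is the almost CR structure together with the partial integrability $[\cald^{1,0},\cald^{1,0}]\subset(\calf\otimes\bbc)\oplus\cald^{1,0}$, and that is evidently what the authors intend, given that they offer no proof.
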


\subsection{Transverse K\"ahler flows}
It is well known \cite{BG05,Noz14} that the characteristic Reeb foliation $\calf_\xi$ of a Sasakian structure $\cals=(\xi,\eta,\Phi,g)$ is an isometric transverse K\"ahler flow with a nontrivial Euler class, i.e. $[d\eta]_B\neq 0$. However, the converse does not generally hold and one of the goals of this paper is to describe their relationship. 

\begin{definition}\label{transkahflow}
A transverse Hermitian flow $(\calf,\bar{J},\gro^T)$ is said to be a {\bf transverse K\"ahler flow} if the basic 2-form $\gro^T$ is closed.
\end{definition}

\begin{remark}\label{transKahcohorrem}
As was pointed out to us by Georges Habib not every transverse K\"ahler flow is homologically orientable; a counterexample is given by Carri\`ere. See Example 6 on page 39 in \cite{Car84b}, and also Appendix A in \cite{Mol88}. Hence, not every transverse K\"ahler flow can be taken to be isometric. However, henceforth in this paper, all transverse K\"ahler structures are homologically oriented, and so can be taken to be isometric.
\end{remark}


As emphasized by El Kacimi-Alaoui \cite{ElK}, homologically oriented transverse K\"ahler foliations on compact manifolds possess the same properties as K\"ahler structures on compact manifolds, the Hodge decomposition, Lefschetz decomposition, etc. In particular, a homologically oriented transverse K\"ahler flow $(\xi,\eta,\Phi,g)$ gives rise to a basic Hodge decomposition (over $\bbc$),
\begin{equation}\label{Hodgedecomp}
H^{n}_B(\calf)=\bigoplus_{p+q=n}H^{p,q}_B(\calf), \qquad H^{q,p}_B(\calf)=\overline{H^{p,q}_B(\calf)},
\end{equation}
which gives the basic Hodge numbers
\begin{equation}\label{basicHodgenumb}
h^{p,q}_B=\dim_\bbc H^{p,q}_B(\calf).
\end{equation}
We let $b_B^1=\dim H^1_B(M,\bbr)$ denote the basic first Betti number. Then we easily see

\begin{lemma}\label{b10lem}
Let $(M,\calf)$ be a compact manifold $M$ with an isometric transverse K\"ahler flow $\calf$. Then the natural map $H^1_B(\calf)\lra H^1(M,\bbc)$ is injective,
and $b_B^1=2h^{1,0}_B=2h^{0,1}_B$. In particular, $b_1(M)=0$ implies $h^{1,0}_B=h^{0,1}_B=0$.
\end{lemma}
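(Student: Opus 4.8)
The plan is to prove the three assertions in turn. First I would establish injectivity of the natural map $j\colon H^1_B(\calf)\to H^1(M,\bbc)$ by a direct argument that uses only that $\calf$ is a flow. Then I would read off the identity $b_B^1=2h^{1,0}_B=2h^{0,1}_B$ from the degree-one piece of the basic Hodge decomposition \eqref{Hodgedecomp}--\eqref{basicHodgenumb}. Finally the last claim is immediate from the first two: injectivity forces $H^1_B(\calf)=0$ once $H^1(M,\bbc)=0$, and then the Hodge identity kills $h^{1,0}_B$ and $h^{0,1}_B$.

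For injectivity, suppose a basic $d_B$-closed $1$-form $\alpha$ represents a class in the kernel of $j$, so that $\alpha=df$ for some $f\in C^\infty(M,\bbc)$. Since $\alpha$ is basic we have $\xi\hook\alpha=0$, hence $\xi(f)=df(\xi)=\alpha(\xi)=0$; thus $f$ is constant along the leaves of $\calf$, i.e.\ $f$ is a basic function, and $\alpha=d_Bf$ is $d_B$-exact. Therefore $[\alpha]_B=0$ in $H^1_B(\calf)$, which proves that $j$ is injective. Note that this step uses nothing beyond $\alpha$ being basic and $\calf$ being a foliation; the transverse K\"ahler hypothesis is not needed here.

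For the Betti-number identity I would invoke the transverse Hodge theory for compact transverse K\"ahler flows recalled above (following El Kacimi-Alaoui \cite{ElK}): in complex degree one the decomposition \eqref{Hodgedecomp} reads $H^1_B(\calf)\otimes\bbc=H^{1,0}_B(\calf)\oplus H^{0,1}_B(\calf)$ with $H^{0,1}_B(\calf)=\overline{H^{1,0}_B(\calf)}$. Taking complex dimensions yields $b_B^1=h^{1,0}_B+h^{0,1}_B$ together with $h^{1,0}_B=h^{0,1}_B$, hence $b_B^1=2h^{1,0}_B=2h^{0,1}_B$. Combining this with the injectivity step: if $b_1(M)=0$ then $H^1(M,\bbc)=0$, so $H^1_B(\calf)=0$ by injectivity, whence $b_B^1=0$ and therefore $h^{1,0}_B=h^{0,1}_B=0$.

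I do not anticipate a serious obstacle: the only point requiring a little care is that the primitive $f$ produced in the injectivity argument is genuinely a basic function, which is automatic once $\xi(f)=0$, and that the degree-one conjugation-symmetric splitting of the basic Hodge decomposition is available --- but that is part of the transverse Hodge package already quoted in the text for compact transverse K\"ahler flows.
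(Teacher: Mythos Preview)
Your argument is correct and is exactly the standard justification one would supply; the paper itself omits the proof entirely, introducing the lemma only with ``Then we easily see.'' There is nothing to compare against, and no gap in what you wrote.
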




\begin{definition}\label{transkahcone}
We define the {\bf transverse K\"ahler cone} $K^T(M,\calf)$ to be the set of all transverse K\"ahler classes $[\gro^T]_B$ in $H^{1,1}_B(\calf)\cap H^2_B(\calf)$. 
\end{definition}

From the standard definition of Sasakian structure one sees

\begin{lemma}\label{trKahsaslem}
A transverse K\"ahler flow $(\calf,\bar{J},\gro^T)$ is Sasakian if and only if $\gro^T=d\eta$ where $\eta$ is a contact 1-form and it is isometric with respect to the Reeb vector field of $\eta$.
\end{lemma}

\begin{proof}
The only if part is well known. Suppose that $\gro^T=d\eta$ which implies that $d\eta$ is type $(1,1)$. The transverse form $(\gro^T)_B^n$ is a basic volume form, so $\eta\wedge (\gro^T)^n$ is nowhere vanishing. Thus, $\eta$ is a contact 1-form on $M$. So defining $\xi$ to be the Reeb vector field of $\eta$, the quadruple $(\xi,\eta,\Phi,g)$ is a contact metric structure where Equations \eqref{riemmet} and \eqref{Phieqn} hold. Furthermore, since $(\calf,\bar{J},\gro^T)$ is isometric with respect to $\xi$, \eqref{Phieqn2} holds which implies that $(\xi,\eta,\Phi,g)$ is Sasakian.  
\end{proof}

\subsection{Transverse Holonomy}
For Riemannian foliations we consider the holonomy group of the transverse Levi-Civita connection. 

\begin{definition}\label{transholgrp}
For Riemannian foliations $\calf$ the {\bf transverse holonomy group} $\Hol(\calf)$ is the Riemannian holonomy group of the transverse Levi-Civita connection $\nabla^T$, and $\Hol^0(\calf)$ denotes the restricted transverse holonomy group\footnote{Recall that the restricted holonomy group is obtained by restricting the holonomy computation to null-homotopic loops. $\Hol^0(\calf)$ is the connected component of $\Hol(\calf)$.}. 
\end{definition}

In the case of transverse K\"ahler structures, the transverse complex structure is also parallel, i.e. $\nabla^T \bar{J}=0$ and equivalently, $\nabla^TJ=0$ using the isomorphism defined by the splitting \eqref{orthogsplitM}. It follows that 


\begin{lemma}\label{transhollem}
Let $M$ be a compact manifold of dimension $2n+1$ with a transverse Hermitian flow $(\calf_\xi,\bar{J},\gro^T)$. Then the holonomy representation ${\rm Hol}(\calf_\xi,\bar{J},\gro^T)$ on $TM/\calf_\xi$ lies in $U(n)\subset GL(n,\bbc)$  if and only if $(\calf_\xi,\bar{J},\gro^T)$ is K\"ahler. Moreover, isomorphism 
$$(TM/\calf_\xi,\bar{J},\gro^T)\fract{\psi}{\ra{3.5}} (\cald,J,\gro^T)$$ 
induced by the splitting \eqref{orthogsplitM} induces an isomorphism of holonomy representations.
\end{lemma}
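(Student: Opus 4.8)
The plan is to prove Lemma~\ref{transhollem} by identifying the two equivalent statements as facts about a vector bundle with connection, namely $(TM/\calf_\xi, \nabla^T)$, and transporting them through the metric splitting \eqref{orthogsplitM}. The transverse Levi-Civita connection $\nabla^T$ on the normal bundle $\nu(\calf_\xi)=TM/\calf_\xi$ is the unique torsion-free basic connection compatible with $g^T$; its holonomy group $\Hol(\calf_\xi)$ is, by Definition~\ref{transholgrp}, the ordinary Riemannian holonomy of this connection in the sense of Chapter~2 of \cite{Joy07}. First I would recall the standard principle from holonomy theory: a tensor field on a vector bundle built from parallel data is itself parallel iff it is fixed by the holonomy representation, and dually the holonomy group lies in the stabilizer of a given tensor iff that tensor is $\nabla^T$-parallel. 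Applying this with the tensor being the transverse almost complex structure $\bar J$ (equivalently, a compatible transverse $2$-form $\gro^T$ together with $g^T$), one gets: $\Hol(\calf_\xi)\subset U(n)$ iff $\bar J$ is $\nabla^T$-parallel, i.e.\ $\nabla^T\bar J=0$.

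The substantive content is then to match $\nabla^T\bar J=0$ with the transverse K\"ahler condition $d\gro^T=0$ (Definition~\ref{transkahflow}). Here I would invoke the transverse analogue of the classical fact that an almost Hermitian manifold is K\"ahler iff the complex structure is parallel for the Levi-Civita connection. Concretely: on the local leaf-space charts (the local quotients by the flow, which are genuine Hermitian manifolds whose Levi-Civita connections patch to $\nabla^T$ because $\nabla^T$ is basic and $\calf_\xi$ is Riemannian), the transverse data $(\bar J,\gro^T,g^T)$ pulls back to honest almost Hermitian data, and the equivalence $d\gro^T=0 \iff \nabla\bar J=0$ is the standard one. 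Since basic forms correspond to forms on the local quotients and $d$ corresponds to the exterior derivative there, $d\gro^T=0$ locally means $\nabla^T\bar J=0$, hence globally. This establishes the ``iff'' in the first assertion. This is also exactly the content that makes Lemma~\ref{transhollem} consistent with the remark preceding it that $\nabla^T\bar J=0$ for transverse K\"ahler structures.

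For the second assertion I would observe that the metric $g=g^T+\eta\otimes\eta$ gives the bundle isomorphism $\psi\colon (TM/\calf_\xi,\bar J,\gro^T)\to(\cald,J,\gro^T)$ of \eqref{orthogsplitM}, and that under $\psi$ the transverse Levi-Civita connection $\nabla^T$ is carried to the connection $\psi\circ\nabla^T\circ\psi^{-1}$ on $\cald$; since $\psi$ intertwines $\bar J$ with $J$ and $g^T$ with $g|_\cald$, it is a connection- and metric-preserving bundle isomorphism, so it induces an isomorphism of the respective holonomy representations, compatibly with the inclusions into $U(n)\subset GL(n,\bbc)$. The main obstacle is purely expository rather than mathematical: making precise that $\nabla^T$ descends to the Levi-Civita connections of the local leaf-space Hermitian metrics, so that the classical ``K\"ahler $\iff$ parallel complex structure'' theorem applies verbatim. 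Once that local-to-global dictionary is in place (it is standard for Riemannian foliations, cf.\ \cite{Mol88} and Chapter~2 of \cite{BG05}), the proof is a short assembly of the pieces; I would keep the written proof correspondingly brief, citing the foliated Hodge/holonomy framework of El~Kacimi~Alaoui \cite{ElK} and Molino \cite{Mol88} for the technical underpinnings.
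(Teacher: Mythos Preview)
Your proposal is correct and follows essentially the same approach as the paper's proof: invoke the holonomy principle to get $\Hol(\calf_\xi)\subset U(n)\iff \nabla^T\bar{J}=0$ (equivalently $\nabla^T\gro^T=0$), then identify this with $d\gro^T=0$ via the transverse/leaf-space dictionary, and handle the second assertion by noting that the bundle isomorphism $\psi$ conjugates the holonomy representations. The paper's own proof is considerably terser---it simply states the chain of equivalences $\Hol(\calf)\subset U(n)\iff \nabla^T\gro^T=0,\ \nabla^T\bar{J}=0\iff \gro^T$ basic closed $\iff$ K\"ahler, and observes that holonomy is only defined up to conjugation---so your more detailed justification via local quotients is a welcome expansion rather than a different argument.
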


\begin{proof}
Since the holonomy representation is defined only up to conjugation in $GL(n,\bbc)$, the isomorphism $\psi$ implies that  holonomy representions of $\Hol(\calf)$ on $TM/\calf_\xi$ and $\cald$ are represented by conjugate subgroups of $GL(n,\bbc)$. Moreover, $\Hol(\calf)\subset U(n)$ if and only if $\nabla^T\gro^T=0,~\nabla^T\bar{J}=0$ if and only if $\gro^T$ is a basic closed 2-form if and only if the transverse Hermitian structure $(\calf,\bar{J},\gro^T)$ is K\"ahler.
\end{proof}

Equivalently, we state this with respect to the splitting \eqref{orthogsplitM}.

\begin{lemma}\label{Kahhol}
Let $M$ be a compact manifold with an isometric transverse Hermitian flow $(\xi,\eta,\Phi,g)$. Then $\Hol(\calf)\subset U(n)$ if and only if $(\xi,\eta,\Phi,g)$ is K\"ahler.
\end{lemma}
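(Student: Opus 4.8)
The plan is to deduce Lemma~\ref{Kahhol} directly from Lemma~\ref{transhollem} by transporting the holonomy representation through the orthogonal splitting \eqref{orthogsplitM}. The first point is that the hypothesis that the transverse Hermitian flow is \emph{isometric} is precisely what makes \eqref{orthogsplitM} available: an isometric flow is equipped with a Killing pair $(g,\xi)$ (Molino--Sergiescu \cite{MoSe85}), hence with a metric $g=g^T+\eta\otimes\eta$, and the quotient projection $TM\to TM/\calf_\xi$ then restricts to a bundle isomorphism $\psi\colon(\cald,J,\gro^T)\lra(TM/\calf_\xi,\bar J,\gro^T)$ exactly as in Lemma~\ref{transhollem}.

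Next I would invoke the last assertion of Lemma~\ref{transhollem}: $\psi$ intertwines the transverse Levi-Civita connection $\nabla^T$ on $TM/\calf_\xi$ with the connection it induces on $\cald$, so the two associated holonomy representations are conjugate subgroups of $GL(n,\bbc)$. Since $\Hol(\calf)$ in Definition~\ref{transholgrp} is by definition the holonomy of $\nabla^T$ acting on $TM/\calf_\xi$, it is represented, up to conjugacy, by the holonomy of the induced connection on $(\cald,J)$; because $U(n)$ is stable under conjugation in $GL(n,\bbc)$, the condition $\Hol(\calf)\subset U(n)$ is unambiguous and may be tested on either model.

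Finally I would apply Lemma~\ref{transhollem} itself: $\Hol(\calf)\subset U(n)$ if and only if $(\calf_\xi,\bar J,\gro^T)$ is K\"ahler, i.e. if and only if $\gro^T$ is a basic closed $2$-form. By Definition~\ref{transkahflow} this is exactly the statement that the quadruple $(\xi,\eta,\Phi,\gro^T)$ --- equivalently $(\xi,\eta,\Phi,g)$ via \eqref{riemmet} --- is a transverse K\"ahler flow, which is what ``K\"ahler'' means in the present statement. This closes the equivalence.

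I do not anticipate a genuine obstacle: all of the analytic content (compatibility of $\psi$ with the connections, and the equivalence of $\nabla^T\gro^T=0$ with $\gro^T$ being basic and closed) is already contained in Lemma~\ref{transhollem}, so the work here is purely the bookkeeping of unwinding the two descriptions of the transverse tangent bundle. If a self-contained argument were preferred one could instead work directly on $\cald$: parallel transport by $\nabla^T$ always preserves $g|_\cald$, so it preserves $J$ and $\gro^T$ simultaneously iff $\nabla^TJ=0=\nabla^T\gro^T$ iff $\gro^T$ is basic and closed; but routing through Lemma~\ref{transhollem} is shorter and keeps the identification $(\cald,J)\cong(TM/\calf_\xi,\bar J)$ explicit.
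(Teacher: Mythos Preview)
Your proposal is correct and matches the paper's approach exactly: the paper presents Lemma~\ref{Kahhol} without a separate proof, prefacing it only with ``Equivalently, we state this with respect to the splitting \eqref{orthogsplitM},'' so the intended argument is precisely the transport through $\psi$ from Lemma~\ref{transhollem} that you spell out. Your write-up is simply a more detailed version of what the paper leaves implicit.
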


The irreducible holonomy groups that are proper subgroups of $U(n)$ are $SU(n)$ and $Sp(\frac{n}{2})$ where the later occurs only for $n$ even. In this paper we are interested in transverse K\"ahler flows $(\calf,\bar{J},\gro^T)$ whose transverse holonomy groups are either $SU(n)$  (transverse Calabi-Yau) or $Sp(n)$ (transverse hyperk\"ahler).

\subsection{The Invariant Torus and its Invariant Cone}
For a homologically oriented Riemannian flow on a compact manifold there is a torus of isometries as described by the work of Molino and his coworkers \cite{Mol79,Mol82,MoSe85,Mol88,Car84b} which we now describe. A result of Carri\`ere \cite{Car84b} (see also Appendix A of \cite{Mol88}) states that the closure $\overline{\calf}$ of a Riemannian flow $\calf$ on a compact manifold is a singular Riemannian foliation whose leaf closures are diffeomorphic to a real torus $\bbt$ and that $\calf$ restricted to a leaf is conjugate to a linear flow on $\bbt$. In the case of an isometric transverse K\"ahler flow of real dimension $2n+1$ we have the range $1\leq k\leq n+1$ for the dimension $k$ of $\bbt$. The dimension $k$ is an invariant of the flow called its {\it toral rank}. Hence, associated to each quadruple $(\xi,\eta,\Phi,\gro^T)$ is a maximal torus $\bbt^k$ that leaves $(\xi,\eta,\Phi,\gro^T)$ invariant. $\bbt^k$ is called the {\bf invariant torus} of the isometric transverse K\"ahler flow $(\xi,\eta,\Phi,\gro^T)$. This gives rise to the invariant cone in \eqref{transkahsascone} below, keeping in mind that if the transverse K\"ahler flow is Sasakian it coincides with the Sasaki cone, and is thus viewed as a generalization of the Sasaki cone \cite{BGS06}.

Applying the basic Hodge decomposition \eqref{Hodgedecomp} to the basic Euler class gives 
\begin{equation}\label{basEuldecomp}
[d\eta]_B=[d\eta^{2,0}]_B +[d\eta^{1,1}]_B +[d\eta^{0,2}]_B, \qquad d\eta^{0,2} =\overline{d\eta^{2,0}}
\end{equation}
and as we shall see below $d\eta^{2,0}$ (equivalently $d\eta^{0,2}$) is an obstruction for the transverse K\"ahler flow to be Sasakian. We have 

\begin{proposition}\label{Abprop}
Let $(\xi,\eta,\Phi,g)$ be an isometric transverse K\"ahler flow on a compact manifold $M$ of dimension $2n+1$. Then there exists a $k$-dimensional  Abelian Lie algebra $\ga(M,\calf)$ of Killing vector fields with $1\leq k\leq n+1$ that is independent of the transverse K\"ahler metric and commutes with all transverse vector fields. Moreover, $\xi\in \ga(M,\calf)$.
\end{proposition}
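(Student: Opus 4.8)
I would obtain the Abelian Lie algebra $\ga(M,\calf)$ directly from Molino's structure theory of Riemannian foliations, as adapted to the isometric (homologically oriented) setting. Since $(\xi,\eta,\Phi,g)$ is a transverse Kähler flow, Lemma~\ref{transKahcohor} tells us $\calf$ is homologically oriented, so by Molino--Sergiescu the flow is isometric: there is a bundle-like metric $g$ and a Killing field generating $\calf$. The key input is that for a Riemannian flow the closure of the leaves in the lifted foliation on the transverse orthonormal frame bundle is a torus action, and pushing this structure down produces the so-called \emph{structural Killing algebra}: a finite-dimensional Abelian Lie algebra of transverse Killing fields whose orbits are the leaf closures. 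Concretely, I would let $\bar{\ga}$ denote the Lie algebra of transverse fields that are parallel for the transverse Levi-Civita connection along the leaf closures (equivalently, the closure of the ``transverse flow'' $\bbr\xi$ in the isometry group of the transverse metric completion), and then set $\ga(M,\calf)$ to be its preimage among genuine Killing fields on $M$ commuting with all foliate vector fields. That $\xi$ itself lies in $\ga(M,\calf)$ is immediate since $\xi$ is Killing and central among foliate fields; abelianness and the independence from the choice of transverse Kähler metric follow because the leaf closures — hence the algebra — depend only on the flow and the transverse \emph{holomorphic} (not metric) structure, together with the rigidity of tori.

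**The key steps, in order.** (1) Use Lemma~\ref{transKahcohor} to get homological orientation, hence (Molino--Sergiescu) an isometric model with Killing pair $(g,\xi)$, and recall that the orbits of $\xi$ are geodesics. (2) Apply Molino's theorem to the Riemannian flow: the closures of the leaves of $\calf$ are the fibers of a locally trivial fibration (in the regular case) or more generally are the orbits of the structural Lie algebra $\gg_0$, which for a \emph{flow} (1-dimensional, oriented, isometric) is Abelian — this is where the torus $\bbt^k$ of isometries enters, $k = \dim(\text{leaf closure})$. (3) Identify $\ga(M,\calf)$ with the Lie algebra of this torus, realized as transverse Killing fields on $M$; verify these commute with all transverse (foliate) vector fields, which is built into the construction since they arise as limits of the flow of $\xi$. (4) Bound $k$: clearly $k\geq 1$ because $\xi\neq 0$; for the upper bound $k\leq n+1$, argue that the torus acts on $M$ with $\calf$-saturated orbits of real dimension $k$, and these orbits, being the leaf closures of a transverse Kähler flow, carry a transverse Kähler structure of complex dimension $n$ on a $(2n+1)$-dimensional total space — so a generic orbit closure has real dimension at most $n+1$ (it projects to a complex torus of real dimension at most... the subtlety: the orbit is $\calf \oplus (\text{something in } \cald)$, and the $\cald$-part must be $\bar J$-invariant hence even-dimensional... no, actually the transverse part of the torus acts holomorphically and isometrically, so its orbit in the transverse Kähler manifold is a totally geodesic complex torus? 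Not quite — it's the orbit of a real torus of transverse isometries, of real dimension $k-1$, inside an $n$-complex-dimensional transverse Kähler manifold; combined with invariance this gives $k-1 \le n$, i.e. $k \le n+1$). I would phrase this last bound via the fact that the transverse torus acts by transverse isometries fixing the transverse Kähler form, so the orbit's transverse dimension $k-1$ satisfies $k-1\le n$.

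**Main obstacle.** The delicate point is making the upper bound $k \le n+1$ fully rigorous, and more generally translating Molino's structure theory — which is usually phrased for the foliation itself on $M$ — into a statement about a canonical Abelian algebra of \emph{transverse} Killing fields that is manifestly independent of the transverse Kähler metric and commutes with \emph{all} transverse vector fields (not merely the foliate ones preserving the given metric). One must be careful that the structural algebra $\gg_0$ is a priori attached to the metric closure, so I would show that different transverse Kähler metrics compatible with the same flow and transverse holomorphic structure yield leaf closures that agree — this uses that the leaf closure is determined by the topological/holomorphic data of the isometric flow (the monodromy of the transverse parallel transport lands in a compact group whose identity component is fixed), invoking Lemma~\ref{b10lem} and the structure of $\Hol^0(\calf)$ if needed. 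I expect this ``independence and centrality'' verification, rather than the existence of \emph{some} such algebra, to be the crux; the dimension bounds are then comparatively routine given the transverse Hodge/Lefschetz package quoted from El Kacimi-Alaoui.
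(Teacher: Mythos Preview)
Your approach is essentially the paper's: both rest on Molino's structure theory for Riemannian foliations combined with the Molino--Sergiescu trivialization available once $\calf$ is homologically oriented (your step (1) and Lemma~\ref{transKahcohor}). The difference is in packaging. You build $\ga(M,\calf)$ as the Lie algebra of the closure torus $\bbt^k$ acting by isometries, whereas the paper invokes Molino's \emph{commuting sheaf} $C(M,\calf)$ (the \emph{faisceau transverse central}) directly. The advantage of the sheaf formulation is that the two properties you flag as the ``main obstacle'' --- independence from the transverse metric and commutation with \emph{all} transverse vector fields --- are built into Molino's theorem itself (Theorem~5.2 of \cite{Mol88}): $C(M,\calf)$ is an invariant of $\calf$ alone, and by construction every global transverse field commutes with it. Homological orientation then (via \cite{MoSe85}) globally trivializes $C(M,\calf)$, yielding the Abelian algebra $\ga^T(M,\calf)$ of global transverse Killing fields; one then sets $\ga(M,\calf_\xi)=\ga^T(M,\calf)\oplus\bbr\xi$. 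So what you anticipated as the crux is in fact free in the paper's formulation, and your isometry-closure route, while correct, takes a detour to recover it.

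On the bound $k\le n+1$: the paper does not argue this within the proof --- it is simply asserted, the point being the standard one you sketch (the transverse $(k-1)$-torus acts effectively by holomorphic isometries on the transverse K\"ahler slice of complex dimension $n$, so $k-1\le n$). Your hesitation there is unwarranted; the bound is routine once the algebra is in hand.
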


\begin{proof}
This follows from a result of Molino (see Theorem 5.2 in \cite{Mol88}) that says that on any compact manifold with a Riemannian foliation there exists a locally constant sheaf of germs $C(M,\calf)$ of locally transverse commuting Killing fields such that 
\begin{enumerate}
\item all global transverse vector fields commute with $C(M,\calf)$,
\item  $C(M,\calf)$ is independent of the transverse metric $g^T$.
\end{enumerate}
$C(M,\calf)$ is called the {\it commuting sheaf} in \cite{Mol88} and the {\it faisceau transverse central} in \cite{Mol79,Mol82}, and it is an invariant of the foliation $\calf$. We apply this to the case that $M$ has a transverse K\"ahler flow $(\calf,\bar{J},\gro^T)$. In this case $\calf$ is homologically oriented, so by \cite{MoSe85} the sheaf $C(M,\calf)$ has a global trivialization. This gives an Abelian Lie algebra $\ga^T(M,\calf)$ of global transverse vector fields associated to $\calf$ that is independent of the transverse metric and commutes with all transverse vector fields. Thus, there is a nowhere vanishing smooth vector field $\xi$ tangent to $\calf$ that commutes with all transverse vector fields. This implies that $\ga^T(M,\calf)$ extends to a $k$-dimensional Abelian Lie algebra
\begin{equation}\label{ablie}
\ga(M,\calf_\xi) = \ga^T(M,\calf) \oplus \bbr\xi
\end{equation}
with $1\leq k\leq n+1$ which is independent of the transverse K\"ahler metric and commutes with all transverse vector fields.
Moreover, since the elements of $\ga^T(M,\calf)$ are Killing fields with respect to any transverse K\"ahler metric and transverse K\"ahler forms are harmonic with respect to the basic Laplacian $\grD_B$, the elements of $\ga(M,\calf)$ also leave $\bar{J}$ invariant. Clearly, by construction $\xi\in \ga(M,\calf_\xi)$.
\end{proof}

We now define the transverse K\"ahler analogue of the Sasaki cone, namely the {\bf invariant cone}
\begin{equation}\label{transkahsascone}
\ga^+(M,\calf_\xi)= \{ \xi'\in \ga(M,\calf_\xi)~|~ \eta(\xi')>0\}
\end{equation}
where $\eta$ is the dual 1-form to $\xi$. Such elements take the form 
\begin{equation}\label{Reebsascone}
\xi'=\bar{X} +\eta(\xi')\xi 
\end{equation}
where $\bar{X}\in \ga^T(M,\calf_\xi)$. The function $\eta(\xi')$ is basic with respect to $\calf_\xi$ and is called a {\bf Killing potential}. 

\begin{lemma}\label{isocone}
The set $\ga^+(M,\calf_\xi)$ is a convex cone. Moreover, for any $\xi'\in \ga^+(M,\calf_\xi)$ we have $\ga^+(M,\calf_{\xi'})=\ga^+(M,\calf_\xi)$.
\end{lemma}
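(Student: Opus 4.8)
The first assertion needs no real work: the defining condition of $\ga^+(M,\calf_\xi)$ is that the basic Killing potential $\eta(\xi')$ be pointwise positive on $M$, and $\xi'\mapsto\eta(\xi')$ is linear on the vector space $\ga(M,\calf_\xi)$; so whenever $\eta(\xi_1'),\eta(\xi_2')>0$ everywhere and $a,b>0$ we get $\eta(a\xi_1'+b\xi_2')=a\,\eta(\xi_1')+b\,\eta(\xi_2')>0$ everywhere, and $\ga^+(M,\calf_\xi)$ is an open convex cone.

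For the second assertion I would first record that any $\xi'\in\ga^+(M,\calf_\xi)$ is nowhere zero, since $g(\xi,\xi')=\eta(\xi')>0$, and is a Killing field lying in $\ga(M,\calf_\xi)$; thus $(g,\xi')$ is a Killing pair, $\calf_{\xi'}$ is an isometric flow, and by Proposition \ref{Abprop} the $\ga$-invariant transverse complex structure $\bar J$ descends to $TM/\calf_{\xi'}$. Next I would equip $\calf_{\xi'}$ with a transverse K\"ahler flow structure by the transverse analogue of the Boyer--Galicki--Simanca Reeb-type deformation \cite{BGS06} (the Sasakian case being $\gro^T=d\eta$): put $\eta':=\eta/\eta(\xi')$, so $\eta'(\xi')=1$ and $\ker\eta'=\ker\eta$; then, using that $\eta(\cdot)$ furnishes the Killing potential of the transverse part of each element of $\ga$, correct $\eta(\xi')^{-1}\gro^T$ by a basic term to a closed, $\Phi'$-invariant, positive basic $(1,1)$-form ${\gro'}^T$, so that $(\xi',\eta',\Phi',{\gro'}^T)$ is a transverse K\"ahler flow with the same underlying transverse holomorphic foliation.

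Granting the invariance $\ga(M,\calf_{\xi'})=\ga(M,\calf_\xi)=:\ga$, the rest is immediate: for $\zeta\in\ga$ we have $\eta'(\zeta)=\eta(\zeta)/\eta(\xi')$ with $\eta(\xi')>0$ pointwise, so $\eta'(\zeta)>0$ everywhere if and only if $\eta(\zeta)>0$ everywhere, giving $\ga^+(M,\calf_{\xi'})=\{\zeta\in\ga\mid\eta(\zeta)>0\}=\ga^+(M,\calf_\xi)$. The hard part is exactly this invariance: passing from $\xi$ to $\xi'$ changes the characteristic foliation, and a priori its Molino commuting sheaf, so Proposition \ref{Abprop} cannot simply be reapplied. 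I would argue that $\xi$ and $\xi'$ generate flows inside the orbits of one and the same transverse torus $T$ with ${\rm Lie}(T)=\ga$ --- the torus attached by Molino's structure theory to the common leaf-closure foliation --- and that the commuting-sheaf construction applied to $\calf_{\xi'}$ recovers exactly ${\rm Lie}(T)$. A more robust alternative that sidesteps the choice of $\eta'$ is to identify $\ga^+(M,\calf_\xi)$ with the interior of the cone in $\ga^*$ dual to $\overline{{\rm cone}(\mu(M))}$, where $\langle\mu(p),\zeta\rangle=\eta_p(\zeta)$, and to show that this ``moment cone'' is determined by the isotropy data of the $T$-action on $M$ --- data on which the moment maps for $\xi$ and for $\xi'$ agree up to pointwise positive rescaling --- hence is common to the two Reeb fields, so that their dual cones $\ga^+(M,\calf_\xi)$ and $\ga^+(M,\calf_{\xi'})$ coincide.
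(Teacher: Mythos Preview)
Your proposal is correct and follows essentially the same route as the paper. Your convexity argument via linearity of $\xi'\mapsto\eta(\xi')$ is a cleaner packaging of the paper's segment computation $\eta_0(\xi_t)=(1-t)+t\eta_0(\xi_1)>0$, and for the second assertion both arguments rest on $\eta'=\eta/\eta(\xi')$ --- the paper writes only the special case $\eta'(\xi)=(\eta(\xi'))^{-1}>0$ and says ``by construction,'' whereas you spell out the general formula $\eta'(\zeta)=\eta(\zeta)/\eta(\xi')$ and read off the cone equality directly.

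Where you go beyond the paper is in isolating the equality $\ga(M,\calf_{\xi'})=\ga(M,\calf_\xi)$ as the substantive point. The paper simply asserts that these are ``isomorphic Abelian Lie algebras'' and moves on (the torus version is later recorded as Proposition~\ref{invtorprop}, also without proof). Your instinct that this needs Molino's structure theory --- both flows sit inside the same invariant torus, whose Lie algebra is the commuting sheaf for either foliation --- is correct and is all that is required; the moment-cone alternative you sketch is sound but more elaborate than necessary for this lemma.
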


\begin{proof}
Consider the transverse homothety defined by $\xi\mapsto a^{-1}\xi,~\eta\mapsto a\eta$ and $\gro^T\mapsto a\gro^T$. Then from \eqref{riemmet} we see that if $\xi$ is in $\ga^+(M,\calf_\xi)$, then so is  $(g_a,a^{-1}\xi)$ where 
$$g_a= a\gro^T\circ (\BOne\oplus \bar{J})+ a^2\eta\otimes \eta.$$
So $\ga^+(M,\calf_\xi)$ is a cone. Now suppose $\xi_0,\xi_1\in \ga^+(M,\calf_\xi)$, and consider the line segment $\xi_t=\xi_0 +t(\xi_1-\xi_0)$ then 
$$\eta_0(\xi_t)= (1-t)+t\eta_0(\xi_1)>0$$
for all $t\in [0,1]$ implying that $\ga^+(M,\calf_\xi)$ is convex. To prove the last statement we note that if $\xi'\in\ga(M,\calf_\xi)$, than $\ga(M,\calf_\xi)$ and $\ga(M,\calf_\xi')$ are isomorphic Abelian Lie algebras. So we only need to prove that $\xi\in\ga^+(M,\calf_{\xi'})$. But this follows by construction since $\eta'(\xi)=(\eta(\xi'))^{-1}>0$.
\end{proof}

Next as in Lemma 2 of \cite{ApCa18} we have:

\begin{lemma}\label{AClem}
Suppose $\xi_0,\xi_1\in \ga^+(M,\calf_\xi)$ then $\xi_1=\eta_0(\xi_1)\xi_0\mod\ker\eta_0$.
\end{lemma}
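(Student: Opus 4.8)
The plan is to unwind the definitions and reduce the claim to a pointwise linear-algebra identity on the distribution $\cald$. Both $\xi_0$ and $\xi_1$ lie in the Abelian Lie algebra $\ga(M,\calf_\xi)$, so by Equation \eqref{Reebsascone} we may write $\xi_1 = \bar X + \eta_0(\xi_1)\,\xi_0$ where $\bar X \in \ga^T(M,\calf_{\xi_0})$, i.e.\ $\bar X$ is a transverse vector field (a section of $\cald_0 = \ker\eta_0$ under the splitting \eqref{orthogsplitM} associated to $(\xi_0,\eta_0)$). Thus by definition $\xi_1 \equiv \eta_0(\xi_1)\,\xi_0 \pmod{\ker\eta_0}$. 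The only genuine point to verify is that the ``transverse part'' $\bar X$ really does lie in $\ker\eta_0$ as a vector field on $M$, i.e.\ that $\eta_0(\bar X)=0$; but this is immediate since $\eta_0(\xi_1) = \eta_0(\bar X) + \eta_0(\xi_1)\eta_0(\xi_0) = \eta_0(\bar X) + \eta_0(\xi_1)$, forcing $\eta_0(\bar X)=0$.

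Concretely, I would proceed as follows. First, fix the Killing pair $(g_0,\xi_0)$ associated to $\xi_0\in\ga^+(M,\calf_\xi)$ with characteristic $1$-form $\eta_0$, giving the splitting $TM = \calf_{\xi_0}\oplus\cald_0$ from \eqref{orthogsplitM}. Second, invoke Lemma~\ref{isocone}: since $\xi_1\in\ga^+(M,\calf_\xi) = \ga^+(M,\calf_{\xi_0})$, the vector field $\xi_1$ decomposes according to \eqref{Reebsascone} relative to $\xi_0$ as $\xi_1 = \bar X + \eta_0(\xi_1)\xi_0$ with $\bar X\in\ga^T(M,\calf_{\xi_0})\subset \Gamma(\cald_0) = \Gamma(\ker\eta_0)$. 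Third, conclude that $\xi_1 - \eta_0(\xi_1)\xi_0 = \bar X \in \ker\eta_0$, which is exactly the assertion $\xi_1 = \eta_0(\xi_1)\xi_0 \bmod \ker\eta_0$.

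I do not anticipate a serious obstacle here: the statement is essentially a restatement of the structure of elements of the invariant cone in Equation \eqref{Reebsascone}, once one has Lemma~\ref{isocone} identifying $\ga^+(M,\calf_\xi)$ with $\ga^+(M,\calf_{\xi_0})$. If anything, the one place to be careful is making sure the decomposition \eqref{Reebsascone} is applied with $\xi_0$ (not the original $\xi$) as the reference Reeb field, which is legitimate precisely because of the last statement of Lemma~\ref{isocone}. The Killing-potential remark (that $\eta_0(\xi_1)$ is a basic function) is not needed for the congruence itself but is worth recording, since it is what makes $\bar X$ a nontrivial transverse field in general rather than forcing proportionality on the nose.
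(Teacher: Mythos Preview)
Your argument is correct; indeed the direct computation you give in the first paragraph, namely $\eta_0(\xi_1-\eta_0(\xi_1)\xi_0)=\eta_0(\xi_1)-\eta_0(\xi_1)\eta_0(\xi_0)=0$, already proves the claim without any appeal to Lemma~\ref{isocone} or the structure of $\ga^T(M,\calf_{\xi_0})$. The paper does not supply its own proof but simply cites Lemma~2 of \cite{ApCa18}; your write-up (trimmed to the pointwise identity) is exactly the expected justification.
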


\begin{remark}\label{choosesection}
In the Sasaki category fixing a Sasakian structure $\cals_0=(\xi_0,\eta_0,\Phi_0,g_0)$ and a nowhere vanishing smooth function $f$ the vector field $f\xi_0$ defines a {\it weighted Sasakian structure} in the sense of \cite{ApCa18} when $f$ is chosen to be a nowhere vanishing Killing potential $\eta_0(\xi_1)$ with respect to $\cals_0$. 
\end{remark}


\begin{proposition}\label{invtorprop}
The invariant torus $\bbt^k$ is independent of the transverse K\"ahler metric $\gro^T$ and the choice of Reeb field in $\ga^+(M,\calf)$. Hence, it is independent of the pair $(\xi,[\gro^T]_B)\in \ga^+(M,\calf)\times K^T(M,\calf)$. 
\end{proposition}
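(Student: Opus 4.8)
The plan is to treat the two independence claims separately and, in each, reduce $\bbt^k$ to the metric‑independent data of Proposition \ref{Abprop} together with Molino's commuting sheaf. Throughout, note that every element of $\bbt^k$ is an isometry of $g$ that fixes $\xi$ (hence preserves $\calf_\xi$ and $\eta$) and preserves the transverse complex structure $\bar J$; so $\bbt^k$ is a torus inside the group $\calg$ of diffeomorphisms of $M$ preserving $\calf_\xi$, $\bar J$ and $\eta$, and this group does not involve the choice of transverse K\"ahler form.

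First I would fix $\xi$ and vary $\gro^T$. The key tool is averaging: for any torus $T\subseteq\calg$ and any transverse K\"ahler form $\gro'$, the form $\int_T f^*\gro'\,df$ is again basic, closed, positive and of type $(1,1)$ — all four properties being preserved under averaging over $T$, since the elements of $\calg$ are foliated and transversely holomorphic — hence a $T$‑invariant transverse K\"ahler form. Thus every torus of $\calg$ preserves some transverse K\"ahler form, so the invariant tori arising from the various $\gro^T$ are precisely the maximal tori of $\calg$ and are mutually conjugate. To upgrade conjugacy to equality I would use the canonical subtorus $\bbt_\ga:=\overline{\exp\ga(M,\calf_\xi)}$: by Proposition \ref{Abprop} its generators are Killing for every compatible transverse K\"ahler metric, preserve $\bar J$, and include $\xi$, so $\bbt_\ga$ lies in every invariant torus and depends only on $\calf_\xi$; and by Carri\`ere's theorem \cite{Car84b} it acts with orbits the leaf closures of $\calf_\xi$, which, together with the identification of $\dim\bbt^k$ with the toral rank $k$, shows $\bbt_\ga$ is itself a maximal torus of $\calg$. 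Hence every invariant torus equals $\bbt_\ga$, which is manifestly independent of $\gro^T$.

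Next I would fix $\gro^T$ and vary $\xi$ within $\ga^+(M,\calf)$. Since $\bbt^k$ preserves $\calf_\xi$, it preserves the algebra $\ga(M,\calf_\xi)$ by naturality of the commuting sheaf; as $\ga(M,\calf_\xi)$ is central in the centralizer of $\xi$ inside $\mathrm{Isom}(M,g)$ and $\bbt^k$ is connected, $\bbt^k$ fixes $\ga(M,\calf_\xi)$ pointwise. In particular each $t\in\bbt^k$ fixes any $\xi'\in\ga^+(M,\calf_\xi)$, hence fixes $\eta'$ (as $t$ is an isometry) and $\Phi'$ (built from $\bar J$, $\xi'$, $\eta'$); averaging a transverse K\"ahler form over $\bbt^k$ then exhibits $\bbt^k$ as a torus preserving a transverse K\"ahler flow with Reeb field $\xi'$. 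By the maximality established above and Lemma \ref{isocone} (which gives $\ga^+(M,\calf_{\xi'})=\ga^+(M,\calf_\xi)$), the invariant torus of this $\xi'$‑flow is again $\bbt_\ga=\bbt^k$. Combining the two steps, and recalling that $\ga^+(M,\calf)$ and $K^T(M,\calf)$ depend only on the foliation, yields independence of the pair $(\xi,[\gro^T]_B)$.

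I expect the delicate step to be the passage from ``the maximal tori of $\calg$ are conjugate'' to ``the invariant torus is literally the same torus'', i.e. the claim that the canonical subtorus $\bbt_\ga$ is already maximal (equivalently, $\dim\bbt^k=k$). Once that identification is in hand, Proposition \ref{Abprop} gives metric‑independence for free and Lemma \ref{isocone} gives Reeb‑independence; in the absence of it one must instead verify that $C_\calg(\bbt_\ga)^0$ is the common value of the invariant tori, which is automatic when the transverse structure has only a finite isometry group — in particular for transverse Calabi-Yau and transverse hyperk\"ahler structures.
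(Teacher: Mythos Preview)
The paper does not actually supply a proof of this proposition; it is intended to follow immediately from Proposition~\ref{Abprop} and Lemma~\ref{isocone}. In the paper's setup the torus $\bbt^k$ is the torus whose Lie algebra is the $k$-dimensional algebra $\ga(M,\calf_\xi)$ built in Proposition~\ref{Abprop} from Molino's commuting sheaf --- the superscript $k$ is the toral rank of the flow, i.e.\ the dimension of the leaf closures, not the dimension of a maximal torus of the full automorphism group. Proposition~\ref{Abprop} already asserts that $\ga(M,\calf_\xi)$ is independent of the transverse K\"ahler metric (this is Molino's statement that the commuting sheaf depends only on $\calf$), and Lemma~\ref{isocone} gives $\ga^+(M,\calf_{\xi'})=\ga^+(M,\calf_\xi)$, hence $\ga(M,\calf_{\xi'})=\ga(M,\calf_\xi)$, for every $\xi'\in\ga^+(M,\calf_\xi)$. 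That is the whole argument.

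Your proof is more elaborate because you read the phrase ``a maximal torus $\bbt^k$ that leaves $(\xi,\eta,\Phi,\gro^T)$ invariant'' as meaning a maximal torus of $\gA\gu\gt(\xi,\eta,\Phi,\gro^T)$. That reading is understandable from the wording, but it is not what is meant and it makes your ``delicate step'' genuinely false rather than merely delicate: on a toric Sasaki manifold with quasiregular Reeb field the toral rank is $k=1$, while the maximal torus of automorphisms has dimension $n+1$, so your $\bbt_\ga$ is far from maximal in your $\calg$. Under the intended reading, your $\bbt_\ga=\overline{\exp\ga(M,\calf_\xi)}$ \emph{is} $\bbt^k$ by definition; the delicate step evaporates, the averaging argument and the discussion of conjugacy of maximal tori in $\calg$ become unnecessary, and what remains of your argument --- the appeals to Proposition~\ref{Abprop} for metric-independence and to Lemma~\ref{isocone} for Reeb-independence --- is exactly the paper's intended proof.
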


If we fix an isometric transverse K\"ahler flow $(\xi_o,\eta_o,\Phi_o,g_o)$ we obtain a family of isometric transverse K\"ahler flows associated to $(\xi_o,\eta_o,\Phi_o,g_o)$, namely the disjoint union
$$\bigsqcup_{\xi\in \ga^+(M,\calf_o)}K^T(M,\calf_\xi)=\{(\xi,\eta,\Phi,\gro^T) ~|~\xi\in \ga^+(M,\calf_{\xi_o}),~ [\gro^T]_B\in K^T(M,\calf_{\xi_o})\}$$
which is isomorphic to the diagonal in the product $\ga^+(M,\calf_{\xi_1})\times K^T(M,\calf_{\xi_2})$.


As in the Sasaki case, Section 7.5.1 of \cite{BG05}, we give this family the $C^\infty$ compact-open topology as sections of vector bundles. This gives a smooth family of transverse K\"ahler flows within a fixed basic cohomology class $[\gro^T]_B$, and as in Section 6 of \cite{BGS06} we obtain a smooth family when fixing the underlying CR structure and varying $\xi'\in \ga^+(M,\calf)$ which implies that the family $\calS(\calf,\bar{J})$ is smooth.

\begin{lemma}\label{EulerKah}
Let $(g,\xi)$ and $(g',\xi)$ be two Killing pairs associated to the transverse K\"ahler flow $(\calf_\xi,\bar{J},\gro^T)$ with the same basic Euler class. Then there exists a basic 1-form $\grz$ such that 
$$g'=g +\grz\otimes\eta \oplus \eta\otimes \grz \oplus \grz\otimes\grz.$$
\end{lemma}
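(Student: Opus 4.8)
The plan is to compare the two Riemannian metrics $g$ and $g'$ directly in terms of the orthogonal splittings they induce on $TM$. Both metrics are associated to Killing pairs for the same transverse K\"ahler flow $(\calf_\xi,\bar J,\gro^T)$, so both have the form $g = g^T + \eta\otimes\eta$ and $g' = (g')^T + \eta'\otimes\eta'$, where $\eta,\eta'$ are the respective dual $1$-forms to the common Reeb field $\xi$. The key point is that the transverse data is essentially forced: since the basic cohomology class $[\gro^T]_B$ is fixed and, more importantly, the basic Euler class $[d\eta]_B = [d\eta']_B$ agrees, the two transverse metrics restrict to the same transverse K\"ahler form on $\calf_\xi$; indeed by the standard transverse K\"ahler setup the transverse metric is recovered from $\gro^T$ and $\bar J$ via $g^T = \gro^T\circ(\BOne\otimes\bar J)$ on $TM/\calf_\xi$, so as basic tensors $g^T = (g')^T$ on $TM/\calf$. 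Hence the two metrics differ only in how they extend across the leaf direction.

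Concretely, I would set $\grz := \eta' - \eta$ and first show $\grz$ is basic, i.e. $\xi\hook\grz = 0$ and $\pounds_\xi\grz = 0$. The first follows since $\eta(\xi) = \eta'(\xi) = 1$, so $\grz(\xi) = 0$; the second (equivalently $d\grz$ basic) needs the equality of basic Euler classes — writing $d\eta' = d\eta + d\grz$, one knows $[d\eta]_B = [d\eta']_B$, so $d\grz = d\beta$ for some basic $1$-form $\beta$... actually one must be careful here: $[d\eta]_B$ being an invariant of the foliation up to scale and both equal forces $d\grz$ to be basic-exact, hence in particular $d\grz$ is basic, giving $\pounds_\xi\grz = \xi\hook d\grz + d(\xi\hook\grz) = 0$. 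Once $\grz$ is basic, I compute $g' - g$. Decompose any tangent vector as $X = X_\calf + X_\cald$ relative to the $g$-splitting $TM = \calf\oplus\cald$. Since both metrics have the same transverse metric on $TM/\calf$, the discrepancy $g'(X,Y) - g(X,Y)$ is a sum of terms each involving at least one factor of $\eta$ or $\eta'$ contracted against the arguments, and substituting $\eta' = \eta + \grz$ and expanding bilinearly yields exactly $\grz\otimes\eta + \eta\otimes\grz + \grz\otimes\grz$. This is the claimed formula.

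The main obstacle I expect is pinning down precisely in what sense ``the same transverse metric'' follows from the hypotheses: the lemma fixes the transverse K\"ahler flow $(\calf_\xi,\bar J,\gro^T)$, so $\gro^T$ and $\bar J$ are literally the same, and the transverse Levi-Civita metric on the normal bundle is then determined by $g^T = \gro^T(\cdot,\bar J\cdot)$, so there is genuinely no freedom transversally — the only residual freedom in a Killing pair $(g,\xi)$ with $\xi$ and $\gro^T$ prescribed is the choice of horizontal complement $\cald$, equivalently the choice of connection $1$-form $\eta$ with $\eta(\xi)=1$ and $[d\eta]_B$ prescribed. So the content of the lemma is really the elementary linear-algebra statement about how two such splittings compare, once one observes the transverse part is rigid. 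I would therefore structure the proof as: (1) reduce to equality of transverse metrics using Lemma~\ref{transhollem}/the transverse K\"ahler formalism; (2) define $\grz = \eta' - \eta$ and verify it is basic using $[d\eta]_B = [d\eta']_B$; (3) expand $g' - g$ and read off the formula. Steps (1) and (3) are routine; step (2), the basicness of $\grz$ via the Euler-class hypothesis, is where the real hypothesis is used and deserves the most care.
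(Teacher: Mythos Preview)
Your approach is correct and matches the paper's: both recognize that the transverse part of each metric is the fixed $\gro^T\circ(\BOne\otimes J)$, so $g'-g=\eta'\otimes\eta'-\eta\otimes\eta$, and then take $\grz=\eta'-\eta$ and expand. One simplification to your step~(2): since $\eta=g(\xi,\cdot)$ and $\eta'=g'(\xi,\cdot)$ with $\xi$ Killing for both metrics, one has $\pounds_\xi\eta=\pounds_\xi\eta'=0$ directly, hence $\pounds_\xi\grz=0$ without any appeal to the Euler-class hypothesis; likewise $\grz(\xi)=0$ since $\xi$ is unit for both. The paper's proof invokes $[d\eta]_B=[d\eta']_B$ at the same point you do, but in fact that equality is a \emph{consequence} of $\grz$ being basic rather than an input needed to establish it, so your emphasis on step~(2) as ``where the real hypothesis is used'' is misplaced --- the argument goes through from the Killing-pair data alone.
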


\begin{proof}
The dual 1-forms $\eta,\eta'$ satisfy $g(\xi,X)=\eta(X)$ and $g'(\xi,X)=\eta'(X)$, and since $[d\eta']_B=[d\eta]_B$ there exists a basic 1-form $\grz$ such that $d\eta'=d\eta +d\grz$. But $g$ is given by Equation \eqref{riemmet} and 
$$g'=\gro^T\circ (\BOne \otimes J) \oplus \eta'\otimes\eta'$$
which gives the result.
\end{proof}

\begin{remark}\label{fixCR}
Note that fixing the CR structure fixes the Killing pair $(g,\xi)$.
\end{remark}

\begin{remark}\label{gaurem}
The contact 1-form $\eta$ in a quasiregular Sasakian structure can be viewed as a connection in a principal $S^1$ orbibundle over a projective algebraic orbifold. Two such connections forms $\eta,\eta'$ are said to be {\bf gauge equivalent} if there exists a smooth basic function $f$ such that $\eta'=\eta +df$. One easily sees that such gauge transformed contact metric structures of a Sasakian structure are all Sasakian. This gives rise to gauge equivalences classes of Sasakian structure. Moreover, gauge equivalent Sasakian structures have the same underlying transverse K\"ahler flow, and a choice of Killing pair uniquely determines the gauge together with the class $[\grz]_B$ in $H^1_B(\calf)_\bbr\approx H^1(M,\bbr)$.
\end{remark}


\section{Deformation Theory of Transverse K\"ahler Flows}
We begin by discussing the deformation theory of transverse holomorphic foliations.
The well known Kodaira-Spencer deformation theory of complex manifolds has been completed by Kuranishi \cite{Kur71} and applied to other pseudogroup structures \cite{Kod60,KoSp61}. In particular the deformation theory of transverse holomorphic foliations has been studied extensively \cite{DuKa79,DuKa80,G-M,GHS}. See also Section 8.2.1 of \cite{BG05}.  Since the characteristic foliation $\calf_\xi$ of a Sasakian structure is a transverse holomorphic foliation of dimension one, we can apply this theory to $\calf_\xi$ when the manifold is compact. We can parameterize the transverse complex structures on a Sasaki manifold $M$ by a complex analytic scheme\footnote{Schemes are needed here since the map $\Psi$ in Theorem \ref{Kurspace} vanishes to first order.} $(S,0)^T$ that is the zero set of a finite number of holomorphic functions and a (not necessarily reduced) germ at $0$. The main result is the following theorem of Girbau, Haefliger, and Sundararaman 

\begin{theorem}[\cite{GHS}]\label{Kurspace}
Let $\calf$ be a transverse holomorphic foliation on a compact
manifold $M,$ and let $\Theta_\calf$ denote the sheaf of germs of
transversely holomorphic vector fields. Then
\begin{enumerate}
\item There is a germ $(S,0)^T$ of an analytic space (called the Kuranishi space) parameterizing
a germ of a deformation $\calf_s$ of $\calf$ such that if
$\calf_{s'}$ is any germ of a deformation parameterizing $\calf$
by the germ $(S',0)^T$, there is a holomorphic map
$\phi:(S',0)^T\ra{1.3} (S,0)^T$ so that the deformation
$\calf_{\phi(s)}$ is isomorphic to $\calf_{s'}.$ 
\item The Kodaira-Spencer map $\grr:T_0S\ra{1.3} H^1(M,\Theta_\calf)$ is an
isomorphism. 
\item There is an open neighborhood $U\subset
H^1(M,\Theta_\calf)$ and a holomorphic map $\Psi:U\ra{1.3}
H^2(M,\Theta_\calf)$ such that $(S,0)^T$ is the germ at $0$ of
$\Psi^{-1}(0).$ The $2$-jet of $\Psi$ satisfies
$j^2\Psi(u)=\frac{1}{2}[u,u].$
\end{enumerate}
\end{theorem}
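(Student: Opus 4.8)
The plan is to carry out, transversely, the construction Kuranishi \cite{Kur71} gave for the local moduli of complex structures on a compact manifold. A small deformation of the transverse holomorphic structure of $\calf$ is recorded by a basic $(0,1)$-form $\phi$ valued in $\Theta_\calf$, and the deformed transverse almost complex structure is integrable precisely when $\phi$ satisfies the transverse Maurer--Cartan equation
\begin{equation}\label{transMC}
\db_B\phi + \tfrac12[\phi,\phi] = 0,
\end{equation}
where $\db_B$ is the basic Dolbeault operator on the complex $A^{0,\bullet}_B(\calf,\Theta_\calf)$ of $\Theta_\calf$-valued basic forms and $[\,\cdot\,,\cdot\,]$ is the bracket built from the Lie bracket of transverse vector fields and the wedge of forms. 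The analytic input I would invoke first is transverse Hodge theory for this complex: since $M$ is compact, $A^{0,\bullet}_B(\calf,\Theta_\calf)$ is transversely elliptic, so (by El Kacimi-Alaoui's theory \cite{ElK,ElkH}, which applies in particular whenever $\calf$ is Riemannian, hence for all the flows occurring in this paper) the cohomology groups $H^q(M,\Theta_\calf)=H^q_B(\calf,\Theta_\calf)$ are finite-dimensional, the basic $\db_B$-Laplacian $\grD_B=\db_B\db_B^*+\db_B^*\db_B$ admits a Green's operator $G_B$ and harmonic projector $\calh_B$, and one has the elliptic estimate $\|\db_B^*G_B u\|_{k+1,\a}\le C\|u\|_{k,\a}$ on suitable H\"older completions of basic sections.

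With this package available, the argument is a transcription of Kuranishi's three steps. \emph{Step 1 (the slice).} For $t$ in a small ball $U\subset H^1(M,\Theta_\calf)$, solve $\phi=t+\tfrac12\db_B^*G_B[\phi,\phi]$ by the iteration $\phi_1=t$, $\phi_{m+1}=t+\tfrac12\db_B^*G_B[\phi_m,\phi_m]$; each $\phi_m$ is a polynomial in $t$, and Kuranishi's majorant-series estimate (which only uses the elliptic estimate above) gives convergence in a H\"older norm to a solution $\phi(t)=t+O(|t|^2)$ depending holomorphically on $t$. \emph{Step 2 (the obstruction map).} One checks, by the standard induction showing that $[\phi(t),\phi(t)]$ is $\db_B$-closed, that $\phi(t)$ solves \eqref{transMC} if and only if $\calh_B[\phi(t),\phi(t)]=0$; hence the integrable deformations in the slice are cut out inside $U$ by the holomorphic map
$$\Psi:U\ra{1.3}H^2(M,\Theta_\calf),\qquad \Psi(t)=\tfrac12\calh_B[\phi(t),\phi(t)],$$
and $(S,0)^T:=(\Psi^{-1}(0),0)$ together with the transverse holomorphic foliation $\calf_t$ determined by $\phi(t)$ is the candidate Kuranishi family. \emph{Step 3 (completeness).} Given any germ of a deformation $\calf_{s'}$ over $(S',0)^T$, with Kodaira--Spencer map $\grr'$, one produces the classifying map $\phi:(S',0)^T\ra{1.3}(S,0)^T$ by expressing a connecting cocycle of $\calf_{s'}$ in terms of $G_B$ and $\calh_B$ and invoking the holomorphic implicit function theorem; this is the usual versality argument.

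Parts (2) and (3) then fall out of this construction. Since $\phi(t)=t+O(|t|^2)$, the differential at $0$ of $t\mapsto\phi(t)$ is the identity on $H^1$, so the Kodaira--Spencer map $\grr:T_0S\ra{1.3}H^1(M,\Theta_\calf)$ is an isomorphism, proving (2). For (3), substituting $\phi(t)=t+\tfrac12\db_B^*G_B[t,t]+O(|t|^3)$ into $\Psi$ and using $[\phi(t),\phi(t)]=[t,t]+O(|t|^3)$ gives $\Psi(t)=\tfrac12\calh_B[t,t]+O(|t|^3)$, i.e. $j^2\Psi(u)=\tfrac12[u,u]$ under the Hodge identification of $H^2(M,\Theta_\calf)$ with harmonic forms. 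In particular $\Psi$ has vanishing linear term, so $(S,0)^T=(\Psi^{-1}(0),0)$ can be non-reduced, which is exactly why it must be treated as an analytic scheme, as noted in the footnote above.

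The main obstacle is the analytic foundation rather than the formal bookkeeping: one must know that the basic Dolbeault complex with coefficients in $\Theta_\calf$ behaves like the Dolbeault complex of a compact complex manifold — finite-dimensional cohomology, Hodge decomposition, elliptic estimates on the appropriate Banach spaces of basic sections — and then push Kuranishi's convergence estimate through in that transverse setting. A related subtlety is that for a general transverse holomorphic foliation there is no ambient complex manifold, so every operator above ($\db_B$, $\grD_B$, $G_B$, $\calh_B$, the bracket) must be shown to preserve basic forms and to be compatible with holonomy; once that transverse Hodge package is in place, the remaining steps are a faithful transcription of Kuranishi's argument.
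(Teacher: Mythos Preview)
The paper does not give its own proof of this theorem: it is stated with the attribution \cite{GHS} and then used as a black box, so there is nothing to compare your argument against in this text. Your sketch is a faithful outline of the Kuranishi method as adapted by Girbau--Haefliger--Sundararaman, and the formal steps (Maurer--Cartan equation, slice via $\db^*G$, obstruction map $\Psi=\tfrac12\calh[\phi,\phi]$, versality) are correct.

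One caveat worth flagging: you base the analytic foundation on El Kacimi-Alaoui's transverse Hodge theory, which requires the foliation to be Riemannian (or at least homologically orientable). Theorem~\ref{Kurspace} as stated in \cite{GHS} is for an arbitrary transversely holomorphic foliation on a compact manifold, with no Riemannian hypothesis; their proof instead uses a resolution of $\Theta_\calf$ by a fine sheaf complex built from the transverse Dolbeault complex coupled to the full de Rham complex along the leaves, and they establish the needed finiteness and Green's operator for that (genuinely elliptic, not merely transversely elliptic) complex directly. Your parenthetical remark that the Riemannian case suffices ``for all the flows occurring in this paper'' is accurate and makes your sketch adequate for the applications here, but it does not recover the theorem in the generality stated.
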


Here $u$ is a 1-form with coefficients in the sheaf $\Theta_\calf$, and the element $[u,u]\in H^2(M,\Theta_\calf)$ is the primary obstruction to performing the deformation. Item (i) of Theorem
\ref{Kurspace} says that the analytic space $(S,0)^T$ is versal. Moreover, if $H^2(M,\Theta_\calf)=0$ then a versal deformation exists and the Kuranishi space $(S,0)^T$ is isomorphic to an neighborhood of $0$ in $H^1(M,\Theta_\calf)$. Note that as described in \cite{GHS} Kodaira-Spencer-Kuranishi deformation theory works equally well on compact complex orbifolds. 

Given this first order isomorphism, it is natural to ponder whether actual deformations exist and what their set of equivalences are, that is, describe the moduli space. However, here we restrict ourselves to paint a picture of the local moduli space, namely, the Kuranishi space of deformations of transverse holomorphic flows. We apply Theorem \ref{Kurspace} to the case that the foliation $\calf$ is also transversely K\"ahler with respect to the holomorphic structure. In this case El Kacimi Alaoui and Gmira have proven the following Stability Theorem (see also \cite{ElK88} for the equivalent  orbifold case):

\begin{theorem}[\cite{ElKGm97}]\label{transkahstab}
Let $\calf_0$ be a homologically oriented transversely holomorphic foliation on a compact manifold $M$ with a compatible transverse K\"ahler metric. Then there exists a neighborhood $U$ of the germ $\calf_0$ in the Kuranishi space $S$ such that for all $t\in U$ the holomorphic foliation $\calf_t$ is homologically oriented and has a compatible transverse K\"ahler metric $\gro_t$ depending smoothly on $t$.
\end{theorem}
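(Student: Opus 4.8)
The plan is to transplant the Kodaira--Spencer stability argument into the transverse setting, the analytic backbone being El Kacimi Alaoui's transverse Hodge theory for the basic de~Rham and basic Dolbeault complexes of a transversely holomorphic foliation carrying a compatible (basic) transverse Hermitian metric. Concretely, starting from the given transverse K\"ahler form $\gro^T_0$ on $\calf_0$, I want to produce, for each $\calf_t$ with $t$ in a small neighborhood $U$ of $0$ in the Kuranishi space of Theorem~\ref{Kurspace}, a $d_B$-closed positive basic $(1,1)$-form $\gro^T_t$ depending smoothly on $t$ with $\gro^T_t\to\gro^T_0$ in $C^\infty$; positivity of $\gro^T_0$ then forces positivity of $\gro^T_t$ for $t$ small, so $(\calf_t,\bar J_t,\gro^T_t)$ is a transverse K\"ahler foliation.

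First I would fix the differential-geometric data: by Theorem~\ref{Kurspace} the nearby transversely holomorphic foliations assemble into a smooth family $\{(\calf_t,\bar J_t)\}_{t\in U}$ with $(\calf_0,\bar J_0)$ the given one. One must then exhibit, for $t$ near $0$, a compatible \emph{basic} transverse Hermitian metric $g^T_t$ varying smoothly in $t$ with $g^T_0$ the given transverse K\"ahler metric --- equivalently, one must know that $\calf_t$ is still a Riemannian foliation, so that the basic complexes are transversely elliptic. Granting this, El Kacimi's theory provides for each $t$ basic Laplacians (de~Rham, $\bar\partial_B$, and Bott--Chern) with finite-dimensional harmonic spaces computing $H^*_B(\calf_t)$, the basic Dolbeault groups, and the basic Bott--Chern groups, the harmonic projections depending smoothly on $t$ wherever the relevant dimension is locally constant.

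The numerical heart is then the classical sandwich, with basic cohomology replacing de~Rham cohomology. The semicontinuity theorem for the smooth family of transversely elliptic operators gives $h^{p,q}_B(\calf_t)\le h^{p,q}_B(\calf_0)$ (and the analogous bounds for basic Bott--Chern numbers) for $t$ near $0$; at $t=0$ the transverse K\"ahler metric yields basic Hodge symmetry and the basic $\partial_B\bar\partial_B$-lemma, so the basic Fr\"olicher spectral sequence degenerates and $b^k_B(\calf_0)=\sum_{p+q=k}h^{p,q}_B(\calf_0)$. To propagate this I need a lower bound $b^2_B(\calf_t)\ge b^2_B(\calf_0)$; classically this is topological invariance of $b_2$, which has no foliated analogue since basic de~Rham cohomology genuinely jumps (cf. Nozawa's instability examples). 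I would obtain it instead from the deformation invariance of a suitable transversely elliptic index (a basic Euler characteristic), leaning on the rigidity of homologically oriented Riemannian flows --- the Molino/Carri\`ere structure and the invariant torus of Proposition~\ref{invtorprop} --- together with the universal spectral-sequence inequality $b^2_B(\calf_t)\le h^{2,0}_B(\calf_t)+h^{1,1}_B(\calf_t)+h^{0,2}_B(\calf_t)$. Sandwiching then forces, for $t$ near $0$: all basic Hodge and Bott--Chern numbers locally constant, the basic Fr\"olicher sequence degenerating in degree $2$, and persistence of the transverse $\partial_B\bar\partial_B$-lemma in degree $2$; in particular the natural map $H^{1,1}_{BC,B}(\calf_t)\to H^2_B(\calf_t)\otimes\bbc$ is injective with image a conjugation-stable subspace of real dimension $h^{1,1}_B(\calf_0)$.

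To finish, the $g^T_t$-Bott--Chern-harmonic real basic $(1,1)$-forms form a smooth real vector bundle over $U$ of constant rank $h^{1,1}_B(\calf_0)$ whose fiber over $0$ contains $\gro^T_0$; such forms are automatically $d_B$-closed, so a smooth section $\gro^T_t$ through $\gro^T_0$ consists of $d_B$-closed basic $(1,1)$-forms with $\gro^T_t\to\gro^T_0$, hence positive for $t$ small, and $(\calf_t,\bar J_t,\gro^T_t)$ is the desired transverse K\"ahler foliation. In the quasiregular (toral rank one) case one could alternatively pass to the quotient K\"ahler orbifold and quote El Kacimi's orbifold Kodaira--Spencer theorem, but this does not cover the general foliation. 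I expect the two genuine obstacles --- the two places where the argument leaves the classical template --- to be: (i) showing the deformed foliations remain (basic) transversely Hermitian / Riemannian so that transverse Hodge theory applies along the whole family, which requires a stability result for Riemannian foliations rather than a triviality; and (ii) the cohomology-jumping control of the third paragraph, where no topological invariant is available and one must substitute a transverse index theorem together with the Molino structure of the flow.
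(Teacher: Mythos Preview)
This theorem is quoted from \cite{ElKGm97} and is not proved in the paper; however, the paper outlines the El~Kacimi--Gmira machinery in the proof of Lemma~\ref{pq2lem}, and that outline is the appropriate point of comparison.

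Your overall strategy---transverse Hodge theory, a Bott--Chern-type transversely elliptic operator, semicontinuity, and extension of the K\"ahler form through a constant-rank kernel bundle---is essentially the El~Kacimi--Gmira approach. The fourth-order operator $A_t$ displayed in the paper's proof of Lemma~\ref{pq2lem} is precisely a basic Bott--Chern Laplacian; its kernel $\bfF^{1,1}_t$ consists of $d_B$-closed basic $(1,1)$-forms and represents basic Bott--Chern cohomology via the decomposition $Z^{1,1}_B=\mathrm{im}(\partial_t\bar\partial_t)\oplus\bfF^{1,1}_t$. The K\"ahler form $\gro^T_0$ sits in $\bfF^{1,1}_0$, and once $\dim\bfF^{1,1}_t$ is locally constant one extends it as you describe and concludes by positivity.

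Where your write-up diverges from the actual argument is in your concern (ii). You try to force constancy of $h^{1,1}_B$ through a sandwich with $b^2_B$, correctly observe that in the foliated setting $b^2_B$ is not a priori a deformation invariant, and then propose to substitute a transverse index theorem plus Molino structure. El~Kacimi--Gmira do not proceed this way. They prove \emph{directly} (Proposition~6.3 of \cite{ElKGm97}, invoked in the paper's Lemma~\ref{pq2lem}) that $\dim\bfF^{1,1}_t$ is locally constant near $t=0$, using the specific algebraic structure of $A_t$ together with the K\"ahler identities at $t=0$; the constancy of $b^2_B$ is then an \emph{output}, used in Lemma~\ref{pq2lem} only to deduce constancy of $h^{2,0}_B$ and $h^{0,2}_B$, which is not needed for the stability theorem itself. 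So the index-theoretic detour you sketch is unnecessary: the gap you flag is resolved not by a sandwich but by working directly with the kernel of $A_t$.

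Your concern (i)---that one must first show the deformed foliations carry compatible basic Hermitian metrics so that the transversely elliptic machinery applies along the family---is legitimate and is indeed part of the argument in \cite{ElKGm97}; it is not a triviality, but it is handled there.
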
 

We apply this theorem to the case where the foliation $\calf$ has dimension one, that is to isometric transverse K\"ahler flows:

\begin{theorem}\label{transkahflowstab}
Let $(\calf_0,J,\gro^T)$ be an isometric transverse K\"ahler flow on a compact oriented manifold $M$. Then there exists a neighborhood $U$ of the germ $\calf_0$ in the Kuranishi space $S$ such that for all $t\in U$ the holomorphic flow $\calf_t$ has a compatible transverse K\"ahler metric $\gro^T_t$ making $(\calf_t,J_t,\gro^T_t)$ an isometric transverse K\"ahler flow. 
\end{theorem}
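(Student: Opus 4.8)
The plan is to deduce Theorem \ref{transkahflowstab} from Theorem \ref{transkahstab} by verifying that a transverse K\"ahler flow meets all the hypotheses of El Kacimi Alaoui and Gmira's theorem and then extracting the extra conclusion about isometry. First I would observe that the characteristic foliation $\calf_0$ of a transverse K\"ahler flow $(\calf_0,J,\gro^T)$ is, by Definition \ref{transkahflow} and the discussion preceding it, a transversely holomorphic foliation of (complex) codimension $n$ on the compact oriented manifold $M$, equipped with the compatible transverse K\"ahler metric determined by $\gro^T$ via \eqref{riemmet}. The one nontrivial structural hypothesis in Theorem \ref{transkahstab} is that $\calf_0$ be \emph{homologically oriented}, and this is exactly the content of Lemma \ref{transKahcohor}: since a flow is oriented and $[\gro^T]^p$ is nondegenerate in $H^{2p}_B(\calf_0)\otimes\bbr$ for all $p$, in particular $H^{n}_B(\calf_0)\neq 0$. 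Thus Theorem \ref{transkahstab} applies verbatim and produces a neighborhood $U$ of the germ $\calf_0$ in the Kuranishi space $S$ (which, by Theorem \ref{Kurspace}, parametrizes deformations of $\calf_0$ as a transversely holomorphic foliation) such that every $\calf_t$ with $t\in U$ carries a compatible transverse K\"ahler metric $\gro^T_t$.

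Next I would note that the dimension-one condition on the foliation is preserved under deformation: the deformations parametrized by the Kuranishi space $S$ of Theorem \ref{Kurspace} deform only the transverse holomorphic structure, not the underlying leaf dimension, so each $\calf_t$ is again a one-dimensional holomorphic (hence oriented, as it comes with a nowhere-zero tangent section) foliation. Combining this with the transverse K\"ahler metric $\gro^T_t$ supplied by Theorem \ref{transkahstab}, the triple $(\calf_t,J_t,\gro^T_t)$ is by Definition \ref{transkahflow} a transverse K\"ahler flow. Finally, the last sentence — that all the $(\calf_t,J_t,\gro^T_t)$ are isometric — follows from the general structure theory recalled in Section 2: a transverse K\"ahler flow is in particular a homologically oriented (by Lemma \ref{transKahcohor}) Riemannian flow on a compact manifold, and by Molino and Sergiescu \cite{MoSe85} such a flow is automatically isometric, i.e.\ admits a Killing pair $(g_t,\xi_t)$ with $g_t=\gro^T_t\circ(\BOne\otimes J_t)+\eta_t\otimes\eta_t$ as in \eqref{riemmet}. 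Hence each $\calf_t$ is represented by a quadruple $(\xi_t,\eta_t,\Phi_t,\gro^T_t)$ of the isometric type.

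The only genuine point requiring care — the main (mild) obstacle — is keeping straight which deformation space is in play: Theorem \ref{transkahstab} is stated for the Kuranishi space of deformations of a \emph{transversely holomorphic foliation}, and one must check that this is the same space $S$ appearing in Theorem \ref{Kurspace} applied to the one-dimensional foliation $\calf_0$, so that ``deformations of the transverse K\"ahler flow'' is an unambiguous notion. This is immediate from the fact that, in codimension-one-leaf situations, transverse holomorphic deformations and deformations of the flow coincide (the leaf-space data is exactly the transverse holomorphic structure), together with the remark after Theorem \ref{Kurspace} that the whole Kodaira–Spencer–Kuranishi machinery, and hence Theorem \ref{transkahstab}, carries over to the orbifold/groupoid setting via Proposition \ref{folLiegrpd}. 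Everything else is a direct quotation of results already established, so the proof is short.
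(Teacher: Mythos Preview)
Your proposal is correct and follows essentially the same approach as the paper: the paper simply notes that, by Lemma \ref{transKahcohor}, a transverse K\"ahler flow is homologically oriented, so Theorem \ref{transkahstab} applies directly to the one-dimensional case. Your write-up is more detailed than the paper's one-line reduction (in particular you spell out why the deformed foliations remain one-dimensional and why the isometric conclusion follows from Molino--Sergiescu), but the logical route is identical.
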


We are ready for

\begin{definition}\label{Sstabledef}
Let $(M,\cals_0)$ be a Sasaki manifold. We say that $\cals_0$ is {\bf $\cals$-stable} if there exists a neighborhood $N$ of $(\calf_0,\bar{J_0})$ in the Kuranishi space such that $(\calf_t,\bar{J_t})$ is Sasakian for all $t\in N$.
\end{definition}

Goertsches, Nozawa, and T\"oben proved that the basic Hodge numbers of a compact Sasaki manifold depend only on the underlying CR structure, Theorem 4.5 of \cite{GoNoTo12}, and more recently Ra\'zny \cite{Raz21} proved that the basic Hodge numbers of a compact Sasaki manifold are invariant under arbitrary deformations. The question arises as to whether the analogue of this holds for a general isometric transverse K\"ahler manifold. Generally, we do not know; however, we do have what we need, namely

\begin{lemma}\label{pq2lem}
There exists a neighborhood $N\subset S$ of the transverse K\"ahler flow $(\calf_0,J_0,\gro^T_0)$ in the Kuranishi space $S$ such that
$h^{p,q}(\calf_t,\bar{J_t})=h^{p,q}(\calf_0,\bar{J_0})$ for $p+q=2$ and for all $t\in N$. Furthermore, the holomorphic foliation $(\calf_t,\bar{J_t})$ has a compatible transverse K\"ahler form $\gro^T_t$.
\end{lemma}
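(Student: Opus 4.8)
\medskip
\noindent\emph{Proof proposal.} The final sentence is exactly Theorem~\ref{transkahflowstab}: after shrinking we may assume that on all of $N$ each $\calf_t$ admits a compatible transverse K\"ahler form $\gro^T_t$, and the construction of El Kacimi Alaoui and Gmira produces these (hence the associated metrics and basic Laplacians) depending continuously on $t$, since it proceeds by an implicit function theorem. So we work on such an $N$, where El Kacimi Alaoui's basic Hodge theory holds fiberwise; in particular each $\calf_t$ has a basic Hodge decomposition $H^2_B(\calf_t,\bbc)=H^{2,0}_B(\calf_t)\oplus H^{1,1}_B(\calf_t)\oplus H^{0,2}_B(\calf_t)$ with $\overline{H^{2,0}_B(\calf_t)}=H^{0,2}_B(\calf_t)$, so $b^2_B(\calf_t)=2h^{2,0}_B(\calf_t)+h^{1,1}_B(\calf_t)$.

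The plan is to combine two facts. First, \emph{upper semicontinuity}: $h^{p,q}_B(\calf_t)\le h^{p,q}_B(\calf_0)$ for $t$ near $0$. This is the Kodaira--Spencer semicontinuity argument carried over to the transverse setting --- the basic Dolbeault complexes $(\Omega^{0,\bullet}_B(\calf_t),\db_t)$ form a continuous family of transversally elliptic complexes on the compact foliated manifold $M$, El Kacimi Alaoui's Hodge theory identifies $H^{p,q}_B(\calf_t)$ with the finite-dimensional kernel of the $\db_t$-Laplacian $\Box_t$ on basic $(p,q)$-forms, and $\dim\ker\Box_t$ cannot jump down under a small perturbation. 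Second, \emph{constancy of the basic second Betti number}: $b^2_B(\calf_t)=b^2_B(\calf_0)$. For this one uses the Gysin-type long exact sequence of the (nontrivial) isometric flow,
$$\cdots\lra H^{k}_B(\calf_t)\lra H^k(M,\bbc)\xrightarrow{\ \iota_{\xi_t}\ } H^{k-1}_B(\calf_t)\xrightarrow{\ \cup[d\eta_t]_B\ } H^{k+1}_B(\calf_t)\lra\cdots,$$
together with the injection $H^1_B(\calf_t)\hookrightarrow H^1(M,\bbc)$ of Lemma~\ref{b10lem} --- which upgrades to an isomorphism because $[d\eta_0]_B\ne0$ forces every nearby $\calf_t$ to be nontrivial --- so that $b^1_B(\calf_t)=b_1(M)$ and $b^2_B(\calf_t)=1+b_2(M)-b_1(M)+\operatorname{rank}\bigl(\cup[d\eta_t]_B\colon H^1_B(\calf_t)\to H^3_B(\calf_t)\bigr)$. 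When $b_1(M)=0$ the rightmost term vanishes and $b^2_B(\calf_t)=1+b_2(M)$ is manifestly constant; in the general case one must still show that rank is locally constant.

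Granting this, the lemma follows by the classical dimension count: upper semicontinuity gives $h^{2,0}_B(\calf_t)\le h^{2,0}_B(\calf_0)$ and $h^{1,1}_B(\calf_t)\le h^{1,1}_B(\calf_0)$, while $2h^{2,0}_B(\calf_t)+h^{1,1}_B(\calf_t)=b^2_B(\calf_t)=b^2_B(\calf_0)=2h^{2,0}_B(\calf_0)+h^{1,1}_B(\calf_0)$, and the two inequalities then force termwise equality; finally $h^{0,2}_B(\calf_t)=h^{2,0}_B(\calf_t)$ by conjugation. The main obstacle is precisely the constancy of $b^2_B$ --- equivalently, its lower semicontinuity, since the upper half is Hodge theory --- because basic cohomology is not a homotopy invariant and can genuinely jump under deformations of the foliation (this is why the Goertsches--Nozawa--T\"oben invariance of basic Hodge numbers under change of CR structure has no known analogue in higher degree). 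I would attack it either by controlling the cup-product rank above directly from the Gysin sequence against the unchanged cohomology ring $H^\bullet(M)$, or, perhaps more transparently, by using that the invariant torus $\bbt^k$ of Proposition~\ref{invtorprop} is unchanged, taking the deformation $\bbt^k$-equivariant and --- on the dense quasiregular stratum $\calg_1 TK$, where $H^\bullet_B(\calf_t)$ is the de Rham cohomology of the quotient K\"ahler orbifold $M/S^1_{\xi_t}$, whose underlying diffeomorphism type is stable under a small deformation of its complex structure --- deducing local constancy of $b^2_B$ there, and then propagating it off the stratum using upper semicontinuity of $b^2_B$.
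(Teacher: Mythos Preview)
Your argument is sound in outline but follows a different route from the paper's. You run upper semicontinuity of each $h^{p,q}_B$ (via the second-order basic $\bar\partial$-Laplacian) against constancy of $b^2_B$ and squeeze. The paper instead invokes the \emph{fourth-order} transversely elliptic self-adjoint operator of Kodaira--Spencer type,
\[
A_t=\partial_t\bar\partial_t\bar\partial_t^*\partial_t^*+\bar\partial_t^*\partial_t^*\partial_t\bar\partial_t+\bar\partial_t^*\partial_t\partial_t^*\bar\partial_t+\bar\partial_t^*\bar\partial_t+\partial_t^*\partial_t,
\]
acting on basic $(p,q)$-forms; its kernel $\bfF^{p,q}_t$ represents $H^{p,q}_B(\calf_t)$ via the decomposition $Z^{p,q}_B={\rm im}(\partial_t\bar\partial_t)\oplus\bfF^{p,q}_t$, and Proposition~6.3 of \cite{ElKGm97} gives directly that $\dim\bfF^{1,1}_t=h^{1,1}_B(\calf_t)$ is \emph{locally constant} in $t$, not merely upper semicontinuous. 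From that single constancy plus $b^2_B$ constant the paper reads off $h^{2,0}_B$ and $h^{0,2}_B$. The payoff of the paper's route is that $A_t$ is precisely the operator used in \cite{ElKGm97} to prove the K\"ahler stability theorem itself --- the continuous family of harmonic representatives in $\bfF^{1,1}_t$ is what produces $\gro^T_t$ --- so both clauses of the lemma issue from one piece of machinery, and constancy of $h^{1,1}_B$ is a byproduct rather than the endpoint of a dimension count. Your approach trades the fourth-order operator for a more elementary semicontinuity argument, at the cost of needing semicontinuity of all three summands; both approaches, however, rely on $b^2_B$ being locally constant, which the paper simply asserts (implicitly folding it into the \cite{ElKGm97} package), so neither proof dispatches your ``main obstacle'' explicitly.
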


\begin{proof}
We outline the proof following \cite{ElKGm97} which in turn followed \cite{KoSp60}.
We endow the spaces of smooth sections of vector bundles with the Fr\'echet topology. Then the space $\grO^{p,q}$ of smooth basic $(p,q)$-forms has a smooth family of transversely strongly elliptic essentially self adjoint 4th order differential operators
\begin{equation}\label{At}
A_t =\partial_t\bar{\partial_t}\bar{\partial_t}^*\partial_t^*+\bar{\partial_t}^*\partial_t^*\partial_t\bar{\partial_t} +                            \bar{\partial_t}^*\partial_t\partial_t^*\bar{\partial_t} +\bar{\partial_t}^*\bar{\partial_t} +\partial_t^*\partial_t.
\end{equation}
The kernel of $A_t$ denoted by $\bfF^{p,q}_t$ is given by  
\begin{equation}\label{kerAt}
\bfF^{p,q}_t=\{\gra\in\grO^{p,q}_B(\calf)~|~ \partial_t\gra=0, \quad \bar{\partial_t}\gra=0, \quad \bar{\partial_t}^*\partial_t^*\gra=0\}, 
\end{equation}
and we have the following orthogonal decomposition of smooth closed basic $(p,q)$ forms
\begin{equation}\label{Zpqdecomp}
Z^{p,q}_B(\calf)={\rm im}(\partial_t\bar{\partial_t})\oplus \bfF^{p,q}_t.
\end{equation}
So the cohomology groups $H^{p,q}_B(\calf)$ are represented by elements of $\bfF^{p,q}_t$. Thus, by Proposition 6.3 of \cite{ElKGm97} there is a neighborhood $N$ of the central fiber $(\calf_0,J_0,\gro^T_0)$ such that for all $t\in N$ the dimension of $\bfF^{1,1}_t$ equals $h^{1,1}_B(\calf_{\xi_t},\bar{J_t})$ and is independent of $t$, so  $h^{1,1}_B(\calf_{\xi_t},\bar{J_t})=h^{1,1}_B(\calf_0,\bar{J}_0)$ in $N$. But since the basic 2nd Betti number $b_2^B$ is independent of $t$ and we have
$$b^2_B=h^{2,0}(\calf_{\xi_t},\bar{J_t})+h^{1,1}_B(\calf_{\xi_t},\bar{J_t})+h^{0,2}_B(\calf_{\xi_t},\bar{J_t})=h^{1,1}(\calf_0,\bar{J_0})+2h^{2,0}_B(\calf_{\xi_t},\bar{J_t})$$
which implies that $h^{2,0}_B(\calf_{\xi_t},\bar{J_t})$ and $h^{0,2}_B(\calf_{\xi_t},\bar{J_t})=\overline{h^{2,0}_B(\calf_{\xi_t},\bar{J_t})}$ are also independent of $t$ for all $t\in N$ which proves the first result. The second result also follows by Theorem 6.4 of \cite{ElKGm97}.
\end{proof}

Since $h^{p,q}_B(\calf_\xi,J)$ are integer valued and $\ga^+(M,\calf_\xi)$ is path connected, Lemma \ref{pq2lem} implies

\begin{proposition}\label{h20CR}
Let $(M,\calf_\xi)$ be a compact transverse K\"ahler flow. Then $h^{p,q}_B(\calf_{\xi'})$ is independent of $\xi'\in \ga^+(M,\calf_\xi)$ for $p,q\leq 2$.
\end{proposition}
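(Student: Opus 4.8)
The plan is to show that, for each admissible bidegree $(p,q)$, the function $\xi'\mapsto h^{p,q}_B(\calf_{\xi'})$ is locally constant on the cone $\ga^+(M,\calf_\xi)$, and then to invoke connectedness. By Lemma \ref{isocone} this cone is convex, hence path connected, and $\ga^+(M,\calf_{\xi'})=\ga^+(M,\calf_\xi)$ for every $\xi'$ in it, so the assertion is well posed. The cases $p+q\le 1$ carry no content: $h^{0,0}_B=1$ since $M$ is connected, and $h^{1,0}_B=h^{0,1}_B=\tfrac{1}{2}b_B^1$ by Lemma \ref{b10lem}, while the Gysin sequence of the (nontrivial) flow identifies $b_B^1$ with $b_1(M)$; so these numbers do not depend on $\xi'$. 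Thus the real point is bidegree $p+q=2$.

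For $p+q=2$ I would proceed as follows. As recorded in the discussion preceding Lemma \ref{EulerKah}, the assignment $\xi'\mapsto(\calf_{\xi'},\bar{J},\gro^T_{\xi'})$ is a \emph{smooth} family of transverse K\"ahler flows over $\ga^+(M,\calf_\xi)$, with the underlying CR structure fixed. Fix $\xi'_0\in\ga^+(M,\calf_\xi)$. This family is in particular a deformation of $\calf_{\xi'_0}$, so by the versality statement of Theorem \ref{Kurspace} there is a continuous (Kodaira--Spencer) classifying map from a neighborhood of $\xi'_0$ in $\ga^+(M,\calf_\xi)$ into the Kuranishi space of $\calf_{\xi'_0}$ that induces this family. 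Apply Lemma \ref{pq2lem} with central flow $(\calf_{\xi'_0},\bar{J},\gro^T_{\xi'_0})$: it produces a neighborhood $N$ of $\calf_{\xi'_0}$ in that Kuranishi space on which $h^{p,q}_B$ is constant for $p+q=2$. Pulling $N$ back along the classifying map gives a neighborhood of $\xi'_0$ in $\ga^+(M,\calf_\xi)$ on which $\xi'\mapsto h^{p,q}_B(\calf_{\xi'})$ is constant. Hence this function is locally constant on $\ga^+(M,\calf_\xi)$; being $\bbz$-valued on a connected set, it is constant. As the remarks preceding the proposition suggest, integrality is essential here: the elliptic theory behind Lemma \ref{pq2lem} only delivers upper semicontinuity of $\dim\bfF^{p,q}_t$ a priori, and it is the constancy of $b_2^B$ together with integrality that promotes this to genuine local constancy.

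For the remaining bidegrees with $p,q\le 2$ and $p+q\ge 3$, namely $(2,1)$, $(1,2)$ and $(2,2)$, the same scheme works once one has deformation invariance of $b_3^B$ and $b_4^B$ along the family (so that the semicontinuity-plus-constant-sum argument of Lemma \ref{pq2lem} applies in those degrees); when $\dim M$ is small one may instead reduce them by transverse Serre duality, $h^{2,1}_B=h^{n-2,n-1}_B$ and $h^{2,2}_B=h^{n-2,n-2}_B$, to cases already settled.

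I expect the step needing most care to be the transfer of Lemma \ref{pq2lem} from its native setting---a neighborhood in the Kuranishi space of one flow---to the family over $\ga^+(M,\calf_\xi)$: one must check that the CR-fixed family $\xi'\mapsto(\calf_{\xi'},\bar{J})$ is, near each $\xi'_0$, induced from the Kuranishi space of $\calf_{\xi'_0}$ by a continuous classifying map, and that the elliptic apparatus of Lemma \ref{pq2lem}---the operators $A_t$ of \eqref{At}, their kernels $\bfF^{p,q}_t$, and the identity $\dim\bfF^{p,q}_t=h^{p,q}_B$ that holds because the nearby deformed flows stay transverse K\"ahler---behaves well in families. Both ingredients are already present in the proofs of Lemma \ref{pq2lem} and Theorem \ref{Kurspace}, so no new analytic input is needed; the content is in the packaging.
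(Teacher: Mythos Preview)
Your proof is correct and follows the same skeleton as the paper's: local constancy of $h^{p,q}_B$ for $p+q=2$ via Lemma~\ref{pq2lem}, combined with integrality and path connectedness of $\ga^+(M,\calf_\xi)$ (Lemma~\ref{isocone}). The paper's proof is a single sentence invoking exactly these three ingredients, without your detour through Kuranishi versality: the elliptic apparatus behind Lemma~\ref{pq2lem} (the operators $A_t$ of \eqref{At} and the constancy of $\dim\bfF^{1,1}_t$) applies directly to any smooth family of transverse K\"ahler flows, in particular to the CR-fixed family over $\ga^+$, so no classifying map into the Kuranishi space is needed. Your route through Theorem~\ref{Kurspace} is not wrong but introduces a step requiring care---the Kuranishi space and its versality statement are complex-analytic, while the family over $\ga^+$ is real-parametrized---and you yourself note in the final paragraph that the direct elliptic argument is available. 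Your separate handling of the bidegrees $(2,1),(1,2),(2,2)$ goes beyond what the paper actually argues; its one-line proof only cites Lemma~\ref{pq2lem}, which addresses $p+q=2$.
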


This proposition together with the fact that quasiregular Sasakian structures are dense in the Sasaki cone allows us to reduce our arguments to the quasiregular case. Given this we shall often use the well known correspondence between the transverse geometry of a quasiregular Sasakian structure and the projective algebraic geometry of its quotient orbifold \cite{BG05}.

\subsection{An Obstruction to $\cals$-stability}
We now consider obstructions to the stability of deformations of the transverse holomorphic foliations $(\calf_\xi,J)$. 

\begin{lemma}\label{varHodge2form}
Let $(\calf_0,\bar{J_0},d\eta_0)$ be the transverse K\"ahler flow of a Sasakian structure $\cals_0=(\xi_0,\eta_0,\Phi_0,g_0)$.
Under the deformation $(\calf_0,\bar{J_0},d\eta_0)\mapsto (\calf_t,\bar{J_t},d\eta)$, there is a neighborhood $N$ of $(\calf_0,\bar{J_0},d\eta_0)$ in the Kuranishi space such that for all $t\in N$ 
\begin{enumerate}
\item the $(2,0)$ component of $d\eta$ is $\partial$-closed with respect to $\bar{J_t}$, 
\item the $(0,2)$ component of $d\eta$ is $\bar{\partial}$-closed with respect to $\bar{J_t}$, 
\item the $(1,1)$ component of $d\eta$ is K\"ahler with respect to $\bar{J_t}$ if and only if $d\eta^{2,0}$ is holomorphic and  $d\eta^{0,2}$ is antiholomorphic,
\item  $d\eta^{2,0}\wedge d\eta^{0,2}+(d\eta^{1,1})^2>0$,
\item $h^{p,q}(\calf_t,\bar{J_t})=h^{p,q}(\calf_0,\bar{J_0})$ for $p+q=2$.
\end{enumerate}
\end{lemma}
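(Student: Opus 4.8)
The plan is to derive (1)--(4) from only two ingredients --- the identity $d(d\eta)=0$ read off in bidegrees with respect to the deformed transverse complex structure $\bar{J_t}$, together with a continuity argument based at $t=0$ --- and to obtain (5) directly from Lemma \ref{pq2lem}. Throughout, the deformation fixes the Reeb field $\xi_0$, hence the characteristic foliation $\calf_0=\calf_t$ and the contact form $\eta_0$, so $d\eta:=d\eta_0$ is a fixed basic closed $2$-form; only the transverse complex structure varies, and $d\eta=d\eta^{2,0}+d\eta^{1,1}+d\eta^{0,2}$ denotes the bidegree decomposition of this fixed form with respect to $\bar{J_t}$, with $d\eta^{0,2}=\overline{d\eta^{2,0}}$ because $d\eta$ is real. (The triple $(\calf_t,\bar{J_t},d\eta)$ is not asserted to be a transverse K\"ahler flow; only that the form $d\eta$ is carried along and decomposed anew.) Since $\bar{J_t}$ is an integrable transverse complex structure, on basic forms the exterior derivative splits as $d=\partial_t+\bar\partial_t$ with $\partial_t^2=\bar\partial_t^2=0$ and $\partial_t\bar\partial_t+\bar\partial_t\partial_t=0$. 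I would first take $N$ small enough that, in addition, the real $(1,1)$-form $d\eta^{1,1}$ is positive definite for every $t\in N$; this is possible because at $t=0$ it is $d\eta_0$, the transverse K\"ahler form of $\cals_0$, and positive definiteness persists under a small perturbation of $\bar{J_t}$.

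Collecting $0=d(d\eta)$ by bidegree, the $(3,0)$ and $(0,3)$ parts give $\partial_t\,d\eta^{2,0}=0$ and $\bar\partial_t\,d\eta^{0,2}=0$, which are exactly (1) and (2). The $(2,1)$ and $(1,2)$ parts give
$$\bar\partial_t\,d\eta^{2,0}+\partial_t\,d\eta^{1,1}=0,\qquad \partial_t\,d\eta^{0,2}+\bar\partial_t\,d\eta^{1,1}=0 .$$
Because $d\eta^{1,1}$ is a real $(1,1)$-form it is closed iff $\partial_t\,d\eta^{1,1}=0=\bar\partial_t\,d\eta^{1,1}$ (the two terms being of distinct bidegree and complex conjugate to one another), and by the displayed identities this holds iff $\bar\partial_t\,d\eta^{2,0}=0$ and $\partial_t\,d\eta^{0,2}=0$; but $\bar\partial_t\,d\eta^{2,0}=0$ is precisely the assertion that the $(2,0)$-form $d\eta^{2,0}$ is holomorphic, and $\partial_t\,d\eta^{0,2}=0$ that $d\eta^{0,2}$ is antiholomorphic. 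Since on $N$ the form $d\eta^{1,1}$ is positive, ``closed'' coincides with ``K\"ahler'' there, so (3) follows.

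For (4) I would simply note that the $4$-form $d\eta^{2,0}\wedge d\eta^{0,2}+(d\eta^{1,1})^2$ is, for each $t$, of bidegree $(2,2)$ with respect to $\bar{J_t}$, depends continuously on $t$ through $\bar{J_t}$, and at $t=0$ equals $(d\eta_0)^2$, which is strictly positive; hence it stays strictly positive near $0$, and we shrink $N$ to this neighborhood. (Alternatively, algebraically: $d\eta^{2,0}\wedge d\eta^{0,2}=d\eta^{2,0}\wedge\overline{d\eta^{2,0}}$ is a weakly positive $(2,2)$-form by the standard positivity of $(\sqrt{-1})^{\,p^2}\alpha\wedge\bar\alpha$ for a $(p,0)$-form $\alpha$, while $(d\eta^{1,1})^2$ is strictly positive since $d\eta^{1,1}$ is positive on $N$; and when $\dim M=5$ the expansion of $(d\eta)^2$ has no $(4,0),(0,4),(3,1)$ or $(1,3)$ part, so $(d\eta)^2=(d\eta^{1,1})^2+2\,d\eta^{2,0}\wedge d\eta^{0,2}$ is positive directly because $\eta$ is a contact form.) Finally, (5) is Lemma \ref{pq2lem} applied to the transverse K\"ahler flow $(\calf_0,\bar{J_0},d\eta_0)$ of $\cals_0$, and the final $N$ is taken to be the intersection of the finitely many neighborhoods produced above.

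I do not expect a genuine obstacle: (1)--(3) are pure bidegree bookkeeping for $d(d\eta)=0$ plus the elementary fact that a $(p,0)$-form is holomorphic iff it is $\bar\partial$-closed, and (5) is a citation. The only point that wants a little care is (4) --- pinning down which notion of positivity for real $(2,2)$-forms is in force and checking the relevant continuity (or, in the algebraic variant, that weakly positive plus strictly positive is strictly positive and that the square of a positive $(1,1)$-form is strictly positive); these are standard linear-algebra facts that follow from the local normal form of a positive $(1,1)$-form.
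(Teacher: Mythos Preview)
Your proposal is correct and follows essentially the same approach as the paper: both read off the bidegree components of $d(d\eta)=0$ to obtain (1)--(3), invoke the contact condition (you more carefully, via continuity and the algebraic positivity of $\alpha\wedge\bar\alpha$) for (4), and cite Lemma~\ref{pq2lem} for (5). Your treatment is in fact more explicit than the paper's, in particular your care to secure positivity of $d\eta^{1,1}$ on $N$ before invoking ``closed $\Leftrightarrow$ K\"ahler'' in (3).
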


\begin{proof}
The Hodge decomposition of the $d_B$-closed basic 2-form $d\eta$ with respect to the transverse holomorphic structure $(\calf_\xi,J_t)$ is given by Equation \eqref{basEuldecomp}. This shows that $d\eta^{2,0}$ is $\partial$-closed, $d\eta^{0,2}$ is $\bar{\partial}$-closed, and that 
$$\bar{\partial}d\eta^{2,0}+\partial d\eta^{1,1}=0, \qquad \bar{\partial}d\eta^{1,1}+\partial d\eta^{0,2}=0.$$ 
So $d\eta^{1,1}$ will be closed if and only if $d\eta^{2,0}$ is holomorphic and  $d\eta^{0,2}$ is antiholomorphic. Thus, in this case $d\eta^{1,1}\circ(\bar{J_t}\otimes \BOne)$ will be a transverse K\"ahler metric in a neighborhood of the central fiber $(\calf_\xi,\bar{J})$ which proves (1),(2), and (3). Item (4) follows from the Hodge decomposition and the fact that $\eta$ is a contact 1-form. Item (5) holds by Lemma \ref{pq2lem}.
\end{proof}



Applying Lemma \ref{varHodge2form} to Sasaki manifolds shows that if  $d\eta^{2,0}$  is a nonzero holomorphic section of $H^{2,0}(\calf_\xi,\bar{J})$, we can deform to a transverse K\"ahler structure which is not necessarily associated to a Sasakian structure since $\gro^T=d\eta^{1,1}\neq d\eta$. Indeed, in Theorem \ref{mainhyper} below we prove that this is the case for transverse hyperk\"ahler structures. For any transverse holomorphic deformation of a Sasakian structure, we view the holomorphic section $d\eta^{2,0}$ as an obstruction to $\cals$-stability\footnote{Nozawa identifies the $(0,2)$ component $(d\eta)^{0,2}$ as an obstruction to stability. Of course these are completely equivalent obstructions.}. 



\subsection{Proof of Theorem \ref{no02thm} and Corollary \ref{CYcor}}
Given a Sasakian structure $\cals=(\xi,\eta,\Phi,g)$ there is an underlying transverse K\"ahler structure $(\calf,\bar{J},\gro^T)$ that satisfies $\gro^T=d\eta$ and is isometric with respect to the Reeb vector field $\xi$. However, given such an isometric transverse K\"ahler structure, the corresponding Sasakian structure is not unique. Clearly, the Sasakian structure $(\xi,\eta',\Phi',g')$ where $\eta'=\eta +\grz$ for $\grz$ a closed basic 1-form has the same underlying transverse K\"ahler structure $(\calf,\bar{J},d\eta)$. Here $\Phi'=(\BOne +\xi\otimes\grz)\circ \Phi$ and $g'=g+\eta\otimes\grz +\grz\otimes\eta +\grz\otimes\grz$. More generally, Corollary 1.7 in \cite{Noz14} shows that when $H^1(M,\bbr)=0$, the forgetful functor from the set of Sasakian structures on a closed manifold $M$ to the set of transversally K\"ahler flows is full. Explicitly, this corollary says that on a manifold with vanishing first Betti number, if the underlying transversally K\"ahler flows of two Sasakian structures are isomorphic, then the Sasakian structures are isomorphic. It remains to show that the vanishing of $h^{2,0}_B(\cals)$ and $h^{0,2}_B(\cals)$ implies that if the central fiber is a transversally K\"ahler flow $(\calf,\bar{J},d\eta)$ of a  Sasakian structure, there is a neighborhood of $(\calf,\bar{J},d\eta)$ in the Kuranishi space consisting of  transversally K\"ahler flows $(\calf_t,\bar{J_t},d\eta_t)$ of Sasakian structures $\cals_t$. By Lemma \ref{pq2lem} these Hodge numbers are independent of the Sasakian structure in $\ga^+(M,\calf_\xi)$. So Lemma \ref{varHodge2form} implies that $h^{2,0}_B(\cals_t)=0$ for $t\in N_0$, a small enough neighborhood of the central fiber. So when we deform the transverse holomorphic foliation $\calf,\bar{J}$, the basic Euler class $[d\eta]_B$ of $(\calf_t,\bar{J}_t)$ must remain type $(1,1)$. It then follows from the following Theorem 1.1 of \cite{Noz14} that the smooth family of flows $(\calf_t,\bar{J_t})$ are Sasakian in a possibly smaller neighborhood of $(\calf,\bar{J})$. 

\begin{theorem}[Nozawa, Theorem 1.1]
Let $(\calf_0,\bar{J_0},d\eta_0)$ be the underlying transversally K\"ahler flow of the Reeb vector field of a Sasakian structure $\cals_0$, and let $(\calf_t,\bar{J_t})$ be a smooth family of transversally holomorphic flows in a neighborhood $V$ of $(\calf_0,\bar{J_0},d\eta_0)$ in the Kuransishi space. If the basic Euler class is of degree $(1,1)$ for all $t\in V$, then there exists an open neighborhood $V_1$ of the central fiber in $V$ and a smooth family of Sasakian structures $\cals_t$ such that the underlying transversally holomorphic flow of the Reeb vector field of $\cals_t$ is $(\calf_t,\bar{J_t})$ for all $t\in V_1$.
\end{theorem}

This proves Theorem \ref{no02thm}.
The proof of Corollary \ref{CYcor} now follows as in Proposition 7.1.7 of \cite{Joy07}.

\section{Transverse K\"ahler Holonomy}
The irreducible transverse K\"ahler holonomy groups are 
$$U(n),\qquad SU(n),\qquad Sp(n)$$
which correspond to irreducible transverse K\"ahler geometry, transverse Calabi-Yau geometry, and transverse hyperk\"ahler geometry, respectively. Such general transverse structures were studied recently by Habib and Vezzoni \cite{HaVe15}. They can be defined as holomorphic foliations whose transverse holonomy group is contained in $SU(n)$. Here we are interested in their relation with Sasakian geometry, so we specialize to the case of a holomorphic foliation of dimension one, namely the characteristic Reeb foliation $\calf_\xi$.  Sasaki manifolds with transverse holonomy contained in $SU(n)$ are null-Sasaki having vanishing transverse Ricci curvature by the transverse Yau Theorem \cite{ElK,BGM06}. They are called contact Calabi-Yau manifolds in \cite{TomVe08}. 

Following Joyce \cite{Joy07,GHJ03} we deal with irreducible transverse Calabi-Yau and irreducible transverse hyperk\"ahler structures although we give the more general definitions below. Note that when $n=2$ we have the equality $SU(2)=Sp(1)$, so Calabi-Yau and hyperk\"ahler geometry coincide when $n=2$. We note that the condition of irreducibility is crucial for the following stability results.

\begin{remark}\label{sprem}
There is one other even dimensional irreducible Berger holonomy group that is related to Sasakian geometry, the group $Sp(n)\cdot Sp(1)$ whose transverse flows are twistor spaces of 3-Sasakian structures, cf. \cite{BG99}; however, generally they are not K\"ahler and therefore, are not treated in this paper. 
\end{remark}

\subsection{Transverse Irreducible Calabi-Yau Structures}
Since $c_1(\calf)=0$ the transverse geometry is the geometry of compact Calabi-Yau orbifolds which has been studied in \cite{Cam04} following the manifold case \cite{Bog78,Bea83}. 

\begin{definition}\label{transCYdef}
We say that a transverse K\"ahler flow $(\calf,\bar{J},\gro^T)$ on a compact manifold $M$ of dimension $2n+1$ is a {\bf transverse Calabi-Yau flow} if its transverse holonomy group is contained in $SU(n)$. This transverse Calabi-Yau structure is {\bf irreducible} if the transverse holonomy group equals $SU(n)$. We abbreviate irreducible transverse Calabi-Yau structures (flows) by {\bf ITCY}. The ITCY flow is said to be of {\bf Sasaki type} if $\gro^T=d\eta$ for some Sasakian structure $(\xi,\eta,\Phi,g)$.
\end{definition}

Calabi-Yau structures have holomorphic volume forms, so as expected transverse Calabi-Yau structures have transverse holomorphic volume forms, i.e holomorphic sections $\grO^T$ of $H^{n,0}_B$. Since as mentioned above Calabi-Yau structures coincide with hyperk\"ahler structures when $n=2$, we assume in this section that $n>2$.

We have following 

\begin{theorem}\label{thm1.2cor}
Let $M^{2n+1}$ be a compact manifold of dimension $2n+1$ with $b_1(M)=0$. If $M$ has an ITCY flow $(\calf,\bar{J},d\eta)$ of Sasaki type and $n>2$ then $(\calf,\bar{J},d\eta)$ is $\cals$-stable. Moreover, the Kuranishi space $S$ is a open set in $H^1(M,\Theta)$.
\end{theorem}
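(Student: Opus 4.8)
The plan is to deduce Theorem~\ref{thm1.2cor} from Theorem~\ref{no02thm} (and its proof), exactly as the structure of the paper suggests, by verifying the two hypotheses of Theorem~\ref{no02thm} for an ITCY flow of Sasaki type when $n > 2$: namely $b_1(M) = 0$ and $h^{0,2}_B = h^{2,0}_B = 0$. For the first hypothesis, I would use Lemma~\ref{b10lem}: since the transverse holonomy is exactly $SU(n)$, the transverse holonomy representation on $\cald$ is irreducible (the standard representation of $SU(n)$ on $\bbc^n$ has no invariant subspaces), so by transverse Bochner-type arguments (or the transverse de Rham / Hodge decomposition for $SU(n)$-holonomy foliations, as in \cite{ElK,Joy07}) there are no nonzero parallel basic $1$-forms and in fact $h^{1,0}_B = h^{0,1}_B = 0$; hence $b^1_B = 0$, and since $b_1(M) = 0$ follows from $b^1_B=0$ via the injection $H^1_B(\calf) \hookrightarrow H^1(M,\bbc)$ being an isomorphism when $H^1(M,\bbr)$ has no transverse part — more carefully, one argues directly that for a transverse K\"ahler flow with $h^{1,0}_B = 0$ and trivial commuting sheaf contribution one gets $b_1(M) = 0$. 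The key point for the second hypothesis is the transverse analogue of the fact that a compact K\"ahler manifold with holonomy exactly $SU(n)$ and $n > 2$ has $H^{2,0} = 0$: the space $H^{2,0}_B$ is identified with parallel basic $(2,0)$-forms (transverse Hodge theory plus the Weitzenböck formula, since $c_1(\calf) = 0$ makes the relevant curvature terms vanish), and $\Lambda^{2,0}(\bbc^n)$ contains no $SU(n)$-invariant vector for $n \geq 3$ — the invariants of $SU(n)$ in the exterior algebra are generated by the volume form in degree $(n,0)$ and $(0,n)$, which sit in degree $\geq 3$. So $h^{2,0}_B = 0$, and $h^{0,2}_B = \overline{h^{2,0}_B} = 0$ by \eqref{Hodgedecomp}.

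Concretely, the key steps in order: (i) use Proposition~\ref{folLiegrpd} to pass to the quasiregular case, so that $(M,\cals)$ is an $S^1$-orbibundle over a Calabi-Yau orbifold $\calz$ with $\Hol(\calz) = SU(n)$, and the basic cohomology is the orbifold cohomology of $\calz$; (ii) invoke the orbifold Bogomolov-type / Calabi-Yau structure theory \cite{Cam04,Bog78,Bea83} — or equivalently the transverse de Rham decomposition for $SU(n)$-holonomy — to conclude $h^{1,0}_B = h^{0,1}_B = h^{2,0}_B = h^{0,2}_B = 0$; (iii) deduce $b_1(M) = 0$ from $b^1_B = 2h^{1,0}_B = 0$ together with the Gysin-type sequence \eqref{basicexactseq} (the $S^1$-orbibundle has nonzero Euler class $[d\eta]_B$, which already forces $b_1$ down), and the triviality of the commuting sheaf beyond $\bbr\xi$ in the irreducible case; (iv) apply Theorem~\ref{no02thm} to conclude $\cals$-stability; (v) for the ``moreover'' clause, observe that by item (iii) of Theorem~\ref{Kurspace} the Kuranishi space $S$ is the germ of $\Psi^{-1}(0) \subset U \subset H^1(M,\Theta_\calf)$, and then show $H^2(M,\Theta_\calf) = 0$ — here $\Theta_\calf \cong \Lambda^{0,n-1}_B \otimes \grO^T$ via the transverse holomorphic volume form $\grO^T$, so $H^2(M,\Theta_\calf) \cong H^{0,n-1}_B \otimes (\text{line})$; one must check this vanishes, which for ITCY with $n > 2$ follows from the $SU(n)$-invariant-free statement in bidegree $(0,n-1)$ applied to harmonic representatives (equivalently $h^{0,n-1}_B = 0$ since $\Lambda^{0,n-1}(\bbc^n)^{SU(n)} = 0$ for $n \geq 3$). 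With $H^2(M,\Theta_\calf) = 0$, Theorem~\ref{Kurspace} gives that $S$ is an open neighborhood of $0$ in $H^1(M,\Theta_\calf)$, completing the proof; this is the route indicated by the paper's remark that ``the proof of Corollary~\ref{CYcor} follows as in Proposition 7.1.7 of \cite{Joy07}.''

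I expect the main obstacle to be step (v), and specifically the vanishing $H^2(M,\Theta_\calf) = 0$: in the manifold K\"ahler Calabi-Yau case this is Bogomolov-Tian-Todorov unobstructedness, which uses the contraction isomorphism $\Theta \cong \Omega^{n-1}$ and the $\ddb$-lemma; transversally one needs the basic $\ddb$-lemma (available for transverse K\"ahler flows, cf.\ \cite{ElK}) and care that the identification $\Theta_\calf \cong \Lambda^{0,n-1}_B \otimes \grO^T$ is compatible with the foliated/basic structure — i.e.\ that $\grO^T$ is genuinely a basic holomorphic section, which holds because transverse holonomy $= SU(n)$ gives a parallel (hence basic, hence $\pounds_\xi$-invariant) transverse holomorphic volume form. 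A secondary subtlety is making the $b_1(M) = 0$ argument airtight in the orbifold setting, since a priori $b_1(M)$ could receive a contribution either from $H^1_B(\calf_\xi)$ or from the Euler class kernel in \eqref{basicexactseq}; but $h^{0,1}_B = 0$ kills the former and nontriviality of $[d\eta]_B$ controls the latter, so this should go through cleanly. The irreducibility hypothesis is exactly what is needed throughout — without it the holonomy could be a proper product and the invariant-theory vanishing statements fail (e.g.\ $U(1)\times SU(n-1)$ has invariant $(1,1)$- and even $(1,0)$-type data only in the split sense, but reducibility reintroduces parallel forms), which is why the statement is restricted to the irreducible case and to $n > 2$.
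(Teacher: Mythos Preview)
Your argument for $\cals$-stability (steps (i)--(iv)) is essentially the paper's: reduce to the orbifold quotient, use the $SU(n)$-representation-theoretic fact that $\Lambda^{p,0}(\bbc^n)$ has no nonzero invariants for $0<p<n$ to kill $h^{1,0}_B$ and $h^{2,0}_B$, and then invoke Theorem~\ref{no02thm}. Your treatment of $b_1(M)=0$ is more explicit than the paper's, which simply notes that $h^{p,0}=0$ for $0<p<n$ and declares the conclusion immediate; your use of the Gysin-type sequence to pass from $b^1_B=0$ to $b_1(M)=0$ is the right way to fill that in.

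However, step (v) contains a genuine error. You claim $H^2(M,\Theta_\calf)=0$ via the contraction isomorphism $\Theta_\calf\cong\Omega^{n-1}_\calf$ and the vanishing of $SU(n)$-invariants in $\Lambda^{0,n-1}$. But the contraction isomorphism gives
\[
H^2(M,\Theta_\calf)\;\cong\;H^2(M,\Omega^{n-1}_\calf)\;\cong\;H^{n-1,2}_B(\calf),
\]
not $H^{0,n-1}_B$. The group $H^{n-1,2}_B$ is \emph{not} controlled by $SU(n)$-invariants in the exterior algebra (harmonic $(n-1,2)$-forms are not parallel in general), and it is typically nonzero: already for a Calabi--Yau $3$-fold one has $H^2(\Theta)\cong H^{2,2}\cong H^{1,1}$, which has positive dimension. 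So the obstruction \emph{space} does not vanish, and your route to smoothness of $S$ fails.

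What the paper does instead is the orbifold version of Tian's unobstructedness argument: using the isomorphism $\Theta\cong\Omega^{n-1}$ and the basic $\partial\bar\partial$-lemma, one solves the Maurer--Cartan equation $\bar\partial\omega(t)+\tfrac12[\omega(t),\omega(t)]=0$ order by order, showing that although $H^2(M,\Theta_\calf)\neq 0$, the obstruction \emph{map} $\Psi$ is identically zero after a suitable choice of $\omega_N$'s. This is the Bogomolov--Tian--Todorov mechanism you allude to in passing, and it is not a consequence of $H^2$ vanishing --- it is a replacement for it.
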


\begin{proof}
First we note that since transverse CY structures are null Sasakian, they are always quasiregular (\cite{BG05}, pg 246). So transverse CY structures are described by CY orbifolds. Moreover, since we consider irreducible Calabi-Yau orbifolds $X$, the transverse holonomy group is precisely $SU(n)$. Now since $n>2$, as noted on page 125 of \cite{Joy07}, the induced action of the holonomy group $SU(n)$ on $\grL^{p,0}(X)$ fixes no complex $(p,0)$ form for $0<p<n$, and this implies that the Hodge numbers $h^{p,0}$ vanish in this range. The first statement is then an immediate corollary of Theorem \ref{no02thm}. 

The second statement is an orbifold version of a result of Tian \cite{Tia86} which we now describe.  So we let $X$ be a compact K\"ahler orbifold and $\Gamma(X, \Omega^{p, q}(\Theta_X))$ be the set of global $(p, q)$-forms with coefficients in the sheaf of germs of holomorphic vector fields $\Theta_X$ or more generally for the sheaf of germs of any holomorphic tensor field. For the description of tensor fields on orbifolds we refer to \cite{BGK05} as well as Section 4.4.2 of \cite{BG05}. Generally, the canonical sheaf and the canonical orbisheaf are not equivalent; however, since transverse Calabi-Yau structures are null Sasakian structures, there are no branch divisors and Tian's proof is straightforward to generalize. From the GHS Theorem \ref{Kurspace} we need to prove the existence of a one parameter family of  solutions $\omega(t) \in \Gamma(X, \Omega^{0, 1}(TX))$ with
\begin{equation}
\label{deformation_equation_1}
\bar{\partial} \omega(t) + \frac{1}{2} [\omega(t), \omega(t)] = 0, ~ \omega(0) = 0
\end{equation}
give a deformation of complex structures over $X$. 

By the Taylor expansion (at the singular points, we consider the corresponding local covering spaces), we have $\omega(t)  = w_1 t + w_2 t^2 + \cdots$ which we plug into \eqref{deformation_equation_1}. Given $\omega_1 \in \Gamma(X, \Omega^{0, 1}(TX))$, we then need to solve the following system of equations inductively
\begin{equation}
\label{deformation_equation_2}
\bar{\partial} \omega_N + \frac{1}{2} \sum_{i=1}^{N-1} [\omega_i, \omega_{N-i}] = 0,  ~ (N \geq 2).	
\end{equation}

Now we want to change \eqref{deformation_equation_2} a bit. Since the canonical orbisheaf $K_X$ is  trivial, we have a natural isomorphism
\begin{align*}
i_q : \Gamma(X, \Omega^{0, q}(T_X)) &\to \Gamma(X, \Omega^{n-1, q}). \\
\end{align*}
For every $\Omega^{0, q}(T_X)$, locally we have
$$
\phi = \sum_{\substack{i, J \\ |J|=q} } f^i_{\bar{J}} \frac{\partial}{\partial z^i}\otimes d \bar{z}^J,
$$
and
$$
i_q(\phi) = dz^1 \wedge \cdots \wedge dz^n(\phi).
$$
It is easy to check that $i_q$ is well-defined and isomorphic. Our goal is to replace $\omega_i$ in \eqref{deformation_equation_2}. To do that, we define
\begin{align*}
[i_1(\omega_1), i_1(\omega_2)] := i_2 [\omega_1, \omega_2 ].
\end{align*}

Thus given $\omega_1 \in \Gamma(X, \Omega^{n-1,1})$, we need to solve the following system of equations inductively
\begin{equation}\label{defeqn3}
\bar{\partial} \omega_N + \frac{1}{2} \sum_{i=1}^{N-1} [\omega_i, \omega_{N-i}] = 0,  \qquad  \text{for $N \geq 2$},	
\end{equation}
where $\omega_i \in \Gamma(X, \Omega^{n-1, 1}), ~ i = 2, 3, \ldots, N-1$.
Since the proof of Lemma 3.1 of \cite{Tia86} is local, it also holds in the orbifold case on the local uniformizing neighborhoods.

\begin{lemma}
Let $\omega_1, \omega_2 \in \Gamma(X, \Omega^{n-1, 1})$, then
$$
[\omega_1, \omega_2] = \partial (i^{-1}(\omega_1) \lrcorner \omega_2) - \#(\partial \omega_1) \wedge \omega_2 + \omega_1 \wedge \#(\partial \omega_2).
$$
\end{lemma}

Then since the $\partial \bar{\partial}$-lemma for orbifold Hodge theory follows from the transverse version in \cite{ElK}, the remainder of Tian's argument applies to our case. Indeed the proof of Theorem 1 of \cite{Tia86} goes through verbatim.
\end{proof}


\subsection{Transverse Irreducible Hyperk\"ahler Structures}
The seminal work on hyperk\"ahler manifolds is \cite{HKLR}.
Hyperk\"ahler structures are a particular type of quaternionic structure to which we refer to Chapter 12 of \cite{BG05}, Chapter 10 of \cite{Joy07}, and Chapter 3 of \cite{GHJ03} as well as the standard references \cite{Huy99,Ver05}.

Although we give the more general definition, henceforth by hyperk\"ahler we shall mean the irreducible case, ${\rm Hol}=Sp(n)$. We abbreviate irreducible transverse hyperk\"ahler structures by {\bf ITHK}.

\begin{definition}\label{transhyperkahdef}
We say that a transverse K\"ahler flow $(\calf,\bar{J},\gro^T)$ on a compact manifold $M$ of dimension $4n+1$ is a {\bf transverse hyperk\"ahler flow} if its transverse holonomy group is contained in $Sp(n)$. The transverse hyperk\"ahler structure is {\bf irreducible} if the transverse holonomy group equals $Sp(n)$.
\end{definition}

\begin{remark}\label{altdef}
An equivalent definition of transverse hyperk\"ahler is that the contact bundle $\cald$ admits three almost complex structures $\{I_i\}_{i=1}^3$ that satisfy the algebra of the quaternions 
\begin{equation}\label{quatalg}
I_iI_j=-\grd_{ij}\BOne +\gre_{ijk}I_k,
\end{equation}
and the induced transverse antisymmetric forms $\gro_i^T=g\circ (I_i\otimes \BOne)$ are covariantly constant ($\nabla^T\gro^T_i=0$) with respect to the transverse Levi-Civita connection $\nabla^T$. 
\end{remark}

It immediately follows from the definition that $M$ has real dimension $4n+1$. Since we have an inclusion of holonomy groups $Sp(n)\subset SU(2n)$ a transverse hyperk\"ahler structure is automatically a transverse null K\"ahler structure. We want to know when this transverse K\"ahler structure is Sasakian. There is a 1-1 correspondence between transverse hyperk\"ahler flows and hyperk\"ahler orbifolds, and these orbifolds are projective algebraic if and only if  the canonical bundle $c_1(K_X)\in H^2_{orb}(X,\bbz)$.



\begin{lemma}\label{hyperkahsaslem}
Let $(\xi,\eta,\Phi,g)$ be a contact metric structure with a transverse hyperk\"ahler structure $\{I_i\}_{i=1}^3$. Then fixing a transverse complex structure, say $I_1$, gives a null transverse K\"ahler structure $\gro_1^T$. It is a Sasakian structure $\cals_1=(\xi,\eta,\Phi_1,g)$ with $\Phi_1=I_1\oplus \xi\otimes \eta$ if and only if $\gro^T_1=d\eta$. 
In this case we say that the hyperk\"ahler structure is {\it associated} to the (necessarily null) Sasakian structure $\cals_1$, or conversely the null Sasakian structure  $\cals_1$ is associated to the hyperk\"ahler structure $\{I_i\}_{i=1}^3$.
\end{lemma}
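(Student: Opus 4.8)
The statement to prove is Lemma \ref{hyperkahsaslem}, which characterizes when a transverse hyperk\"ahler structure $\{I_i\}$ compatible with a contact metric structure $(\xi,\eta,\Phi,g)$ gives a Sasakian structure after fixing one of the complex structures, say $I_1$.

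The plan is to unwind the definitions on both sides and reduce everything to the identity $\gro_1^T = d\eta$. First I would recall that by Definition \ref{transhyperkahdef} and Remark \ref{altdef}, fixing $I_1$ on the contact bundle $\cald$ gives a transverse complex structure with $\nabla^T\gro_1^T = 0$, so $\gro_1^T$ is a basic closed $(1,1)$-form; together with the fact that $Sp(n)\subset SU(2n)$, this means $(\calf_\xi, I_1, \gro_1^T)$ is a transverse null K\"ahler flow. Setting $\Phi_1 = I_1\oplus \xi\otimes\eta$ as in Equation \eqref{Phieqn}, we obtain a candidate quadruple $(\xi,\eta,\Phi_1,g)$; one must check that the Riemannian metric $g$ is indeed recovered by Equation \eqref{riemmet} with this $\Phi_1$, which holds because $g$ restricted to $\cald$ is $g|_\cald = \gro_1^T\circ(\BOne\otimes I_1)$ (the hyperk\"ahler compatibility $\gro_i^T = g\circ(I_i\otimes\BOne)$) and $g$ splits orthogonally as $g = g|_\cald + \eta\otimes\eta$ by Equation \eqref{orthogsplitM}.

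The heart of the argument is then Lemma \ref{trKahsaslem}: a transverse K\"ahler flow $(\xi,\eta,\Phi_1,\gro_1^T)$ is Sasakian if and only if $\gro_1^T = d\eta$ with $\eta$ a contact 1-form. For the forward direction, if $(\xi,\eta,\Phi_1,g)$ is Sasakian then by definition $d\eta$ is the transverse K\"ahler form, i.e. $g|_\cald = d\eta\circ(\BOne\otimes I_1)$, forcing $\gro_1^T = d\eta$. Conversely, if $\gro_1^T = d\eta$, then since $(\gro_1^T)^{2n}$ is a basic volume form (nondegeneracy of the transverse K\"ahler form), $\eta\wedge(d\eta)^{2n}$ is nowhere vanishing, so $\eta$ is automatically a contact 1-form; Lemma \ref{trKahsaslem} then applies directly to conclude that $(\xi,\eta,\Phi_1,g)$ is Sasakian. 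Finally, since $Sp(n)\subset SU(2n)$ the basic first Chern class vanishes, so this Sasakian structure is necessarily null, which is the parenthetical claim.

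I do not expect a serious obstacle here: the lemma is essentially a bookkeeping exercise matching the hyperk\"ahler compatibility conditions against the definition of a Sasakian structure, with Lemma \ref{trKahsaslem} doing the real work. The one point requiring a little care is verifying that $\Phi_1 = I_1\oplus\xi\otimes\eta$ satisfies $\Phi_1^2 = -\BOne + \xi\otimes\eta$ and $\Phi_1\xi = 0$, and that the compatibility $g(\Phi_1 X,\Phi_1 Y) = g(X,Y) - \eta(X)\eta(Y)$ holds — but these follow immediately from $I_1$ being an isometry of $g|_\cald$ and from the orthogonal splitting, so no genuine difficulty arises.
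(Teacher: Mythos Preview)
Your proposal is correct and follows essentially the same line as the paper: both arguments first verify that the covariant constancy $\nabla^T\gro_i^T=0$ yields a closed transverse K\"ahler form with respect to $I_1$, then invoke Lemma~\ref{trKahsaslem} (explicitly in your case, implicitly in the paper) for the biconditional $\gro_1^T=d\eta$, and finally deduce nullity from $Sp(n)\subset SU(2n)$. The only minor differences are that the paper cites Hitchin's Lemma~2.2 for the integrability of the $I_i$ (whereas you appeal to the holonomy characterization of Lemma~\ref{Kahhol}), and the paper explicitly records $\pounds_\xi\Phi_i=0$ to confirm the flow is isometric --- a point you absorb into the orthogonal splitting discussion.
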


\begin{proof}
The condition $\nabla^T\gro_i=0$ implies that $\gro_i$ are closed. But as in Lemma 2.2 of \cite{Hit87} this implies that the transverse almost complex structures $I_i$ are integrable and that the forms $\gro_i^T$ are K\"ahler with respect to the complex structure $I_i$. But clearly $\pounds_\xi\Phi_i=0$ for $i=1,2,3$ since $I_i$ are endomorphisms of $\cald$ and $\pounds_\xi\eta=0$. This implies $\pounds_\xi\gro_i^T=0$. Moreover, it is null, that is, $c_1(\calf_\xi)=0$ which implies $c_1(X)_\bbr=0$ which in turn implies that $c_1(\cald)$ is a torsion class. 
\end{proof}

\begin{remark}\label{transcomsymp}
As in the manifold case a transverse hyperk\"ahler structure defines a transverse K\"ahler structure with a {\bf transverse complex symplectic structure}. Explicitly, if $(I_1,\gro^T_1)$ defines the underlying transverse K\"ahler structure of the transverse hyperk\"ahler structure, the complex 2-form $\gro^T_2+i\gro^T_3$ satifies $\gro_+^n\neq 0$ everwhere, and thus defines a transverse complex symplectic structure. Conversely, if we have a transverse K\"ahler structure $(J,\gro^T)$ together with a transverse holomorphic symplectic 2-form $\gro_\bbc^T$ that is covariantly constant with respect to the transverse Levi-Civita connnection $\nabla^T$ the conditions $\nabla^T\gro^T=\nabla^TJ=\nabla^T\gro^T_\bbc= 0$ forces the transverse holonomy to lie in $Sp(n)$ where the real codimension of the foliation is $4n$ as in \cite{Joy07} Section 10.4. This gives an equivalence between transverse hyperk\"ahler structures and tranverse K\"ahler structures with a transverse complex symplectic structure.
\end{remark}

Given a transverse hyperk\"ahler structure we fix a transverse K\"ahler structure $(I_1,\gro^T_1)$ and its transverse complex symplectic structure $\gro_+=\gro_2+i\gro_3$. We now consider the proof of Theorem \ref{transhyperkahthm}. First, we note that the 2nd statement in the theorem follows from \cite{Cam04}. So it suffices to prove

\begin{theorem}\label{mainhyper}
Let $\cals_1$ be an (ITHK) Sasaki structure on a compact manifold $M$ with $b_1(M)=0$ with the transverse K\"ahler structure defined by  $I_1\in \{I_i\}_{i=1}^3.$ Then there are transverse K\"ahler deformations in the Kuranishi space $(S,0)^T$ that are not Sasakian. So $\cals_1$ is $\cals$-unstable.
\end{theorem}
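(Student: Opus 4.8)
\textbf{Proof proposal for Theorem \ref{mainhyper}.}

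The plan is to exhibit an explicit transverse holomorphic deformation of $\calf_1=\calf_\xi$ along which the basic Euler class $[d\eta]_B$ acquires a nonzero $(2,0)$ component with respect to the deformed transverse complex structure, and then invoke Lemma \ref{varHodge2form}(3) together with Lemma \ref{pq2lem} to conclude that the deformed transverse K\"ahler structure cannot be of Sasaki type. The crucial input is the transverse complex symplectic form $\gro_+=\gro_2^T+i\gro_3^T$ of the ITHK structure, which by Remark \ref{transcomsymp} is a nowhere-degenerate $\nabla^T$-parallel basic holomorphic $(2,0)$-form for $I_1$, and in particular a nonzero holomorphic section of $H^{2,0}_B(\calf_1,\bar{J_1})$ (so that $h^{2,0}_B(\cals_1)\neq 0$). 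Passing to the quotient hyperk\"ahler orbifold $X=M/S^1$ via Proposition \ref{folLiegrpd}, this is precisely the classical picture of Beauville--Bogomolov deformations of (irreducible) holomorphic symplectic orbifolds: the tangent space $H^1(X,\Theta_X)\cong H^1(X,\Omega^1_X)$ (using the isomorphism $\Theta_X\cong\Omega^1_X$ provided by $\gro_+$) contains the class of $\gro_+$ itself, and the Kuranishi space is unobstructed by Bogomolov's theorem in the orbifold form of \cite{Cam04} (this is the second statement of Theorem \ref{transhyperkahthm}, which we are allowed to assume). So pick a path $t\mapsto J_t$ in $(S,0)^T$ realizing the Kodaira--Spencer direction corresponding to (a suitable multiple of) the class of $\gro_+$ in $H^1(X,\Theta_X)\cong H^1(M,\Theta)$.

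Next I would track the Hodge type of the fixed de Rham class $[d\eta]_B$ along this path. By Lemma \ref{varHodge2form} we may write $d\eta = d\eta^{2,0}_t + d\eta^{1,1}_t + d\eta^{0,2}_t$ with respect to $\bar{J_t}$; at $t=0$ we have $d\eta^{2,0}_0=0$ since $\cals_1$ is Sasakian (so $d\eta$ is type $(1,1)$ for $I_1$). The point is that the first-order variation of the $(2,0)$-part under an infinitesimal complex-structure change $\dot{J}\in\Omega^{0,1}_B(\Theta)$ is, up to a standard contraction, the $(2,0)$-component of $\dot J\hook d\eta^{1,1}_0$; choosing $\dot J$ to be the deformation dual to $\gro_+$ (i.e. $\dot J = \gro_+\hook (\gro^T_1)^{-1}$ in the familiar hyperk\"ahler twistor parametrization), this first-order term is a nonzero multiple of $\gro_+=\gro_2^T+i\gro_3^T\neq 0$. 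In other words, $d\eta$ does \emph{not} stay of type $(1,1)$ along the deformation: its $(0,2)$-component (equivalently its $(2,0)$-component), which is Nozawa's obstruction, becomes nonzero for small $t\neq 0$. If one wants to avoid the infinitesimal computation altogether, one can argue by contradiction: if $d\eta$ remained type $(1,1)$ for all $t$ in a neighborhood, then Theorem 1.1 of \cite{Noz14} would make every $(\calf_t,\bar{J_t})$ Sasakian, hence every nearby deformed transverse complex structure would have to be compatible with the \emph{Sasaki} polarization $[d\eta]_B$; but the hyperk\"ahler twistor deformations move $[d\eta]_B$ off the $(1,1)$-locus, exactly as in the compact K\"ahler case where small complex deformations of a projective holomorphic symplectic manifold are no longer projective. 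Either way, by Lemma \ref{varHodge2form}(3) the $(1,1)$-part $d\eta^{1,1}_t$ fails to be $d_B$-closed (equivalently, $d\eta^{2,0}_t$ fails to be holomorphic) unless it vanishes, while Lemma \ref{varHodge2form}(4) forbids $d\eta^{1,1}_t\equiv 0$; so the only transverse K\"ahler form compatible with $(\calf_t,\bar{J_t})$ is $\gro^T_t=d\eta^{1,1}_t\neq d\eta$, which is therefore not of Sasaki type by Lemma \ref{trKahsaslem}. Hence for all small $t\neq 0$ along this path $(\calf_t,\bar{J_t})$ is transverse K\"ahler but not Sasakian, i.e. $\cals_1$ is $\cals$-unstable.

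I expect the main obstacle to be the rigorous bookkeeping of the first-order variation of the Hodge decomposition of the \emph{fixed} closed basic $2$-form $d\eta$ under the Kodaira--Spencer deformation — i.e. verifying cleanly that the chosen deformation direction genuinely produces a nonzero $d\eta^{2,0}_t$ rather than one that can be gauged away by a basic $1$-form (recall from Lemma \ref{transKahlem} that a closed basic $1$-form does not change the transverse K\"ahler flow). This is where irreducibility of the hyperk\"ahler structure is essential: it guarantees $h^{2,0}_B(\cals_1)=1$ with the unique (up to scale) generator being the nondegenerate $\gro_+$, so there is no room for the $(2,0)$-part to be killed, and the obstruction class in $H^{0,2}_B(\calf_1,\bar{J_1})$ — whose dimension is constant along the deformation by Lemma \ref{pq2lem} — is nonzero. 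A secondary technical point is confirming that the twistor deformation direction lies in $(S,0)^T\cong H^1(M,\Theta)$ rather than merely being a formal power series; this is supplied by the unobstructedness statement (Bogomolov--Tian--Todorov in the orbifold setting, \cite{Cam04}) already granted as the second half of Theorem \ref{transhyperkahthm}, combined with the GHS Theorem \ref{Kurspace} which identifies $(S,0)^T$ with an open set in $H^1(M,\Theta)$ once $H^2(M,\Theta)$ plays no obstructive role.
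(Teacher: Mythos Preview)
Your overall strategy coincides with the paper's: both deform along the hyperk\"ahler twistor family $I_z$, $z\in\bbc\bbp^1$, and both ultimately hinge on the fact that the fixed integral class $[d\eta]_B=[\gro^T_1]$ fails to remain of type $(1,1)$ for the nearby complex structures. Where your argument breaks down is in the passage from ``$d\eta$ acquires a nonzero $(2,0)$-part'' to ``$(\calf_t,\bar{J_t})$ is not Sasakian''. The sentence ``the only transverse K\"ahler form compatible with $(\calf_t,\bar{J_t})$ is $\gro^T_t=d\eta^{1,1}_t$'' is simply false (there is a whole K\"ahler cone), and Lemma~\ref{trKahsaslem} only tests the \emph{original} $\eta$: to rule out Sasakian you must exclude \emph{every} contact form $\eta'$ with Reeb field tangent to $\calf_t$, which amounts to showing that the basic Euler \emph{class} $[d\eta]_B$ is not of type $(1,1)$ in $H^2_B(\calf_\xi,\bar{J_t})$. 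Your infinitesimal computation produces a nonzero $(2,0)$-\emph{form} $d\eta^{2,0}_t$, but you never check that its class in $H^{2,0}_B$ is nonzero; the phrase ``no room for the $(2,0)$-part to be killed'' is not a proof, and your alternative ``by contradiction'' paragraph is circular (it assumes the twistor deformation moves $[d\eta]_B$ off the $(1,1)$-locus, which is exactly what has to be shown). Likewise, Lemma~\ref{varHodge2form}(3) does not say that $d\eta^{2,0}_t$ is holomorphic only if it vanishes.

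The paper closes this gap by a cleaner and more global route. It passes to the quotient hyperk\"ahler orbifold $X$ and its twistor space, writes the twisted holomorphic symplectic form $\gro_z=\gro_+ +2z\gro_1 - z^2\gro_-$ on each fibre $X_z$, and observes that a fixed nonzero integral class $\gra\in H^2_{orb}(X,\bbz)$ lies in $H^{1,1}(X_z)$ only for the finitely many roots of the quadratic $\langle\gra,\gro_z\rangle=0$; summing over the countable lattice, one gets ${\rm Pic}(X_z)=0$ for all but countably many $z\in\bbc\bbp^1$. Such $X_z$ are non-algebraic, whereas the quotient of any (necessarily quasiregular, null) Sasakian structure is projective; hence $(\calf_\xi,\bar{I_z})$ is not Sasakian for those $z$. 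Note that this argument in particular shows $\langle[d\eta],\gro_z\rangle=2z\langle\gro_1,\gro_1\rangle\neq 0$ for $z\neq 0$, which is precisely the missing verification that the $(2,0)$-\emph{class} of $[d\eta]_B$ is nonzero --- so your approach can be completed, but it needs this period-map input rather than the form-level reasoning via Lemma~\ref{varHodge2form}(3).
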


\begin{proof}
Any null Sasakian structure $\cals$ is quasiregular, so the quotient  by the $S^1$ action generated by the Reeb vector field $\xi$ is a K\"ahler polarized hyperk\"ahler orbifold $(X_1,\gro_1)$ which represents the transverse hyperk\"ahler structure. Letting the Sasakian structure $\cals_1$ be the central fiber in the Kuranishi space $S$ of transverse holomorphic deformations, there is a neighborhood $N\subset S$ of $X_1$ such that all $X_t\in N$ are transversely K\"ahler by Theorem \ref{transkahflowstab}. Now the transverse hyperk\"ahler structure gives a 2-sphere's worth of transverse complex structures $I_t$ defined by 
\begin{equation}\label{2spheqn}
I_t=t_1I_1+t_2I_2+t_3I_3, \quad \text{with $t=(t_1,t_2,t_3)$ and $t_1^2+t_2^2+t_3^2=1$}.
\end{equation}
We also have a 2-sphere's worth of transverse K\"ahler forms 
\begin{equation}\label{2sphcomplex}
\gro^T_t=\sum_{i=1}^3t_i\gro^T_i=\sum_{i=1}^3t_ig\circ (I_i\otimes \BOne)=g\circ (I_t \otimes \BOne), \qquad \sum_{i=1}^3t^2_i=1.
\end{equation}
Since the transverse geometry is that of a K\"ahler orbifold, this gives rise to the twistor space $\calt(X)$, a complex orbifold which is diffeomorphic as orbifolds to the product $X\times \bbc\bbp^1$, but whose complex structure is not the product structure.  It is more convenient to use stereographic coordinates $z\in\bbc$ defined by
\begin{equation}\label{2spheqn2}
t=(t_1,t_2,t_3)= \Bigl(\frac{1-|z|^2}{1+|z|^2},-\frac{z+\bar{z}}{1+|z|^2},i\frac{z-\bar{z}}{1+|z|^2}\Bigr)
\end{equation}
with corresponding complex structure $I_z$. For each $z\in\bbc\bbp^1$, there is an associated transverse K\"ahler structure. If we begin with a Sasakian structure with respect to $\Phi_0=I_1+\xi\otimes\eta$ and consider deformations of the transverse holomorphic structure leaving the transverse hyperk\"ahler structure invariant, we obtain the complex structures $I_z$ for $z\in\bbc\bbp^1$. From this we get an induced complex structure on $\calt(X)$ as follows. Using the natural projection $p:\calt(X)\lra \bbc\bbp^1$ we can lift the standard complex structure $I_0$ on $\bbc\bbp^1$ to $\calt(X)$ and denote it by $p^*I_0$, and define the complex structure on $\calt(X)$ by $J=I_z+p^*I_0$. Of course, this makes the map $p:\calt(X)\lra \bbc\bbp^1$ holomorphic.
Furthermore, we have a double fibration, a la Penrose\footnote{This arises from Penrose's nonlinear graviton \cite{Pen76} and is amply treated in books \cite{Wel82,WaWe90,MaWo96}.}, (cf. Diagram 12.6.4 of \cite{BG05})
\begin{equation}\label{twistdoubfibr}
\begin{matrix} &&\calt(X)&& \\
                  &\fract{p}{\swarrow} && \searrow & \\
                   \bbc\bbp^1 &&\leadsto && X
\end{matrix}
\end{equation}
which gives a correspondence: points $z\in \bbc\bbp^1\simeq S^2$ correspond to complex structures
$I_z$ on $X$ in the given hyperk\"ahler structure
$\boldsymbol{\cali};$ points $x\in X$ correspond to rational
curves in $\calt(X)$ with normal bundle $2n\calo(1),$ called {\it twistor lines}. The general point is that the holomorphic data on the twistor space $\calt(X)$ encodes the hyperk\"ahler data on $X$. We note that generally the twistor space $(\calt(X),J)$ is not K\"ahler.

Let $(\xi,\eta,\Phi_0,g)$ be a Sasakian structure which is associated to the transverse hyperk\"ahler structure $\{I_i\}_{i=1}^3$. Then by Lemma \ref{hyperkahsaslem} $\gro_1^T$ is a transverse K\"ahler form satisfying $\gro_1^T=d\eta$. Moreover, $\gro_+=\gro_2+i\gro_3$ is a transverse holomorphic section of $H^{2,0}(\calf_\xi)$ and $\gro_-=\gro_2-i\gro_3$ is transverse anti-holomorphic section of $H^{0,2}(\calf_\xi)$. So $h^{2,0}\neq 0\neq h^{0,2}$. Since these transverse hyperk\"ahler structures are null Sasakian, the transverse geometry is that of hyperk\"ahler orbifolds $X$ with cyclic isotropy groups. Now let us deform the complex structure of the orbifold $X$ in a disc in $S^2$ centered around $I_1$. This gives the twisted $(2,0)$ form 
\begin{equation}\label{tw2form}
\gro_z=\gro_++2z\gro_1-z^2\gro_-
\end{equation}
as a section of $p^*\calo(2)\otimes \grO^{2}(\calt(X))$ and representing the variation of Hodge structures on $X$. Thus, $\gro_z$ has two interpretations: (1) as a holomorphic $(2,0)$-form on the twistor space, and (2) as a holomorphic $(2,0)$-form on each member $(X,I_z)$ of the family of complex orbifolds parameterized by a holomorphic section of $\calo(2)$ on $\bbc\bbp^1$. Now $\gro_z$ defines a class in $H^2(X,\bbq)$ for at most a countable number of $z\in\bbc\bbp^1$. Thus, for only a countable number of points $z\in \bbc\bbp^1$ will $[\gro_z]$ lie in an integral Hodge lattice $\grL=H^2_{orb}(X,\bbz)$. That is, for at most a countable number of $z\in\bbc\bbp^1$ we have $[\gro_z]\in H^2_{orb}(X,\bbz)\cap H^{2,0}(X_z)$. Then since for compact irreducible  hyperk\"ahler orbifolds $h^{2,0}=h^{0,2}=1$ \cite{Fuj83}, it follows that in the case when $[\gro_z]\not\in H^2_{orb}(X,\bbz)$ the transverse complex structure admits no (non-zero) integer lattice in $H^{1,1}_B(\calf_\xi)$. So transversally, there is an integral lattice and a transverse Picard group ${\rm Pic}^T(M,\calf_\xi)$ of isomorphism classes of holomorphic orbi-line-bundles over $(X,I_z)$ for at most a countable number of $z\in\bbc\bbp^1$. Thus, in such cases for all but a countable number of points $z\in\bbc\bbp^1$, we have ${\rm Pic}^T(M,\calf_\xi)={\rm Pic}^{orb}(X,I_z)=0$, and so all but a countable number of points $z\in \bbc\bbp^1$ are non-algebraic hyperk\"ahler orbifolds. These cannot represent the transverse K\"ahler structure of a Sasakian structure, since null Sasakian structures are algebraic, that is they are the total space of an orbibundle over a projective algebraic orbifold \cite{BG05}.
\end{proof}


\subsection{Trivial Restricted Transverse Holonomy}\label{trivialhol}
Finally, we briefly consider the case when the restricted transverse holonomy group $\Hol^0(\calf)$ is the identity. For simplicity we only consider the regular case of $S^1$ bundles over a polarized Abelian variety. In this case the restricted transverse holonomy group is the identity. They are nilmanifolds $\gN_\bfl$ of dimension $2n+1$ formed as quotients of the $(2n+1)$-dimensional Heisenberg group $\gH(\bbr)$ by a lattice subgroup $\grG_\bfl$ where $\bfl=(l_1,\ldots,l_n)$ is a $\bbz$-vector whose components are positive and satisfy the divisibility conditions $l_j|l_{j+1}$ for $j=1,\dots,n-1$ \cite{Fol04}. Now $\gN_\bfl$ has a canonical strictly pseudoconvex CR structure $(\cald,J)$. In fact it has a compatible Sasakian structure $\cals_\bfl$, unique up to equivalence, and $\cals_\bfl$ has constant $\Phi$-sectional curvature $-3$  \cite{Boy09}.  These nilmanifolds are both homogeneous and Sasakian, but they are not Sasaki homogeneous. Moreover, Folland shows that there is a 1-1 correspondence between equivalence classes of such CR structures and polarized Abelian varieties $(\bbc^n/\grL_\bfl,L)$ equipped with a positive line bundle $L$ where the lattice $\grL_\bfl$ is the image of $\grG_\bfl$ under the natural projection $\pi:\gN_\bfl\ra{2.5} \bbc^n/\grL_\bfl$. Here $l_1$ is the largest positive integer such that $c_1(L)/l_1$ is primitive in $H^2(\bbc^n/\grL_\bfl,\bbz)$. The first homology group of such nilmanifolds is $H_1(\gN_\bfl,\bbz)=\bbz^{2n}+\bbz_{l_1}$.  Nozawa \cite{Noz14} proved that all such $(\gN_\bfl,\cals_\bfl)$ are $\cals$-unstable when $n \geq 2$.

\subsection{Reducible Transverse K\"ahler Holonomy}\label{redholosect}
Here we consider the case of reducible K\"ahler holonomy, namely,  the {\it join} of two quasiregular Sasaki manifolds $M_1,M_2$ defined in \cite{BG00a,BGO06} and developed further in \cite{BHLT16}. Recall that for any pair of relatively prime positive integers $(l_1,l_2)=\bfl$ we define the join $M\star_{\bfl} M_2$ of two quasiregular Sasaki manifolds $M_1,M_2$ with Reeb vector fields $\xi_1,\xi_2$ respectively, by the quotient of $M_1\times M_2$ by the $S^1$ generated by the vector 
\begin{equation}\label{joinvf}
L_\bfl=\frac{1}{2l_1}\xi_1-\frac{1}{2l_2}\xi_2
\end{equation}
where $\xi_i$ is the Reeb field of the Sasakian structure on $M_i$. This gives rise to the commutative diagram
\begin{equation}\label{joindia}
\begin{matrix}  M_1\times M_2 &&& \\
                          &\searrow\pi_L && \\
                          \decdnar{\pi_{2}} && M_1\star_\bfl M_2 &\\
                          &\swarrow\pi_1 && \\
                         N_1\times N_2&&& 
\end{matrix}
\end{equation}
where $N_i$ are the quotient orbifolds of $M_i$, and the Reeb vector field of the induced Sasakian structure on $M_1\star_\bfl M_2$ is given by
\begin{equation}\label{Reebjoin}
\xi_\bfl=\frac{1}{2l_1}\xi_1+\frac{1}{2l_2}\xi_2.
\end{equation}
We now have

\begin{corollary}\label{stablejoin}
Let $\calm_\bfl=M_1\star_\bfl M_2$ be the join of quasiregular Sasaki manifolds $M_i$, $i=1,2$. Suppose also that $b_1(M_i)=0$ for $i=1,2$, and the basic Hodge numbers $h^{0,2}_B(M_i)$ also vanish. Then every Sasakian structure in the Sasaki cone $\gt^+_\bfl$ of $\calm_\bfl$ is $\cals$-stable. 
\end{corollary}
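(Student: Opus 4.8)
The plan is to reduce the corollary directly to Theorem \ref{no02thm}: I will verify that the join $\calm_\bfl$, with the induced Sasakian structure $\cals_\bfl$ whose Reeb field $\xi_\bfl$ is given by \eqref{Reebjoin}, has $b_1(\calm_\bfl)=0$ and $h^{0,2}_B(\cals_\bfl)=h^{2,0}_B(\cals_\bfl)=0$, and then propagate this to every Reeb field in the Sasaki cone $\gt^+_\bfl$ using the fact that the basic Hodge numbers depend only on the underlying CR structure.

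For the first Betti number I would argue directly on the total space. Since $\calm_\bfl=(M_1\times M_2)/S^1$, where the circle is generated by $L_\bfl$ of \eqref{joinvf} and acts locally freely (it lies in the torus generated by $\xi_1,\xi_2$), the projection $M_1\times M_2\to\calm_\bfl$ is a Seifert $S^1$-fibration, and its Gysin sequence with $\bbq$-coefficients gives an injection $H^1(\calm_\bfl;\bbq)\hookrightarrow H^1(M_1\times M_2;\bbq)=H^1(M_1;\bbq)\oplus H^1(M_2;\bbq)=0$. Hence $b_1(\calm_\bfl)=0$.

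For the weight-$2$ basic Hodge numbers I would identify the transverse Kähler geometry of $\xi_\bfl$. The subspaces $\langle L_\bfl,\xi_\bfl\rangle$ and $\langle\xi_1,\xi_2\rangle$ of the torus acting on $M_1\times M_2$ coincide (from \eqref{joinvf}--\eqref{Reebjoin} one has $\tfrac1{l_1}\xi_1=\xi_\bfl+L_\bfl$ and $\tfrac1{l_2}\xi_2=\xi_\bfl-L_\bfl$), $\xi_\bfl$ is quasiregular, and the quotient of $\calm_\bfl$ by the characteristic foliation $\calf_{\xi_\bfl}$ is the product complex orbifold $N_1\times N_2$ of diagram \eqref{joindia}, carrying a positive linear combination of the pulled-back Kähler forms. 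By the transverse/orbifold equivalence of Proposition \ref{folLiegrpd}, $H^{*}_B(\calf_{\xi_\bfl})$ with its Hodge structure is $H^{*}_{orb}(N_1\times N_2)$, and $H^{*}_B(\calf_{\xi_i})\cong H^{*}_{orb}(N_i)$, so the Künneth theorem yields, for $p+q\le 2$,
$$H^{p,q}_B(\calf_{\xi_\bfl})\ \cong\ \bigoplus_{\substack{p_1+p_2=p\\ q_1+q_2=q}} H^{p_1,q_1}_B(\calf_{\xi_1})\otimes H^{p_2,q_2}_B(\calf_{\xi_2}).$$
Since $b_1(M_i)=0$, Lemma \ref{b10lem} gives $h^{0,1}_B(M_i)=0$, and by hypothesis $h^{0,2}_B(M_i)=h^{2,0}_B(M_i)=0$; feeding these into the displayed formula forces $h^{0,2}_B(\calf_{\xi_\bfl})=h^{2,0}_B(\calf_{\xi_\bfl})=0$. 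Thus $(\calm_\bfl,\cals_\bfl)$ meets the hypotheses of Theorem \ref{no02thm} and is $\cals$-stable. Finally, every Sasakian structure in $\gt^+_\bfl$ has the same underlying CR structure as $\cals_\bfl$, so its basic Hodge numbers coincide with those of $\cals_\bfl$ by Theorem~4.5 of \cite{GoNoTo12}, while $b_1(\calm_\bfl)$ is a topological invariant; hence Theorem \ref{no02thm} applies to each of them, proving $\cals$-stability throughout $\gt^+_\bfl$.

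\textbf{Main obstacle.} The only substantive step is the structural identification in the third paragraph: one must check carefully that the Reeb quotient of $\calm_\bfl$ is genuinely $N_1\times N_2$ as a \emph{complex} orbifold (so that Künneth respects the Hodge bigrading) and that orbifold Künneth and Hodge decomposition behave exactly as in the smooth compact Kähler setting. This is implicit in the join construction of \cite{BG00a,BGO06}, but it should be spelled out; once it is in hand, everything else is a direct application of Theorem \ref{no02thm} and the CR-invariance of basic Hodge numbers.
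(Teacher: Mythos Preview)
Your proposal is correct and follows essentially the same route as the paper's proof: reduce to the quasiregular Reeb field $\xi_\bfl$, identify the transverse geometry with the product orbifold $N_1\times N_2$ via Proposition~\ref{folLiegrpd}, apply the Hodge--K\"unneth decomposition to obtain $h^{0,2}_B(\calm_\bfl)=h^{0,2}_B(M_1)+h^{0,2}_B(M_2)+h^{0,1}_B(M_1)h^{0,1}_B(M_2)=0$, invoke Theorem~\ref{no02thm}, and propagate to all of $\gt^+_\bfl$ via the CR-invariance of basic Hodge numbers from \cite{GoNoTo12}. Your version is in fact slightly more complete than the paper's, which does not explicitly verify $b_1(\calm_\bfl)=0$; your Gysin-sequence argument fills that gap cleanly.
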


\begin{proof}
Since by \cite{GoNoTo12} the basic Hodge numbers depend only on the underlying CR structure, it suffices to prove the corollary for the Reeb field \eqref{Reebjoin} which is quasiregular. This amounts to computing the Hodge numbers of the product orbifold $N_1\times N_2$. By the Hodge-Kunneth formula (\cite{Voi02} page 286) we have
$$H^{0,2}(N_1\times N_2)=H^{0,2}(N_1)\otimes H^{0,0}(N_2) + H^{0,0}(N_1)\otimes H^{0,2}(N_2) +H^{0,1}(N_1)\otimes H^{0,1}(N_2).$$
This implies that 
$$h^{0,2}_B(\calm_\bfl)=h^{0,2}_B(M_1)+h^{0,2}_B(M_2) +h^{0,1}_B(M_1)h^{0,1}_B(M_2)$$
which vanishes by hypothesis and the injectivity of $H^1_B\lra H^1(M)$. The result then follows from Theorem \ref{no02thm}.
\end{proof}

We remark that the hypothesis of the corollary implies, using Theorem  \ref{no02thm} that the Sasakian structures on $M_i$ are both $\cals$-stable. However, we do not know whether generally the join of $\cals$-stable Sasakian structures is $\cals$-stable.

\subsection{Fiber Joins and $\cals$-Stability}
There is another type of join construction due to Yamazaki \cite{Yam99} which describes a construction of K-contact structures on sphere bundles over a symplectic manifold. Given a compact symplectic manifold $N$ with $d+1$ integral symplectic forms $\gro_j$, not necessarily distinct. Let $L_j$ be the complex line bundle on $N$ such that $c_1(L_j)=[\gro_j]$, then Yamazaki shows that the unit sphere bundle in the complex vector bundle $\oplus_{j=1}^{d+1}L_j^*$ has a natural K-contact structure associated to each Reeb vector field in the Sasaki cone $\gt^+_{sph}$ of the sphere $S^{2d+1}$. The manifold is denoted by $M=M_1\star_f\cdots\star_f M_{d+1}$ where $M_j$ is principal $S^1$ bundle associated to $L_j$. Moreover, it is easy to see that this K-contact structure is Sasakian if $N$ is a projective variety and $\gro_j$ are integral K\"ahler forms \cite{BoTo20b}. It was also shown there that such Sasakian structures come in two types, cone decomposable fiber joins and cone indecomposable fiber joins. The former is equivalent to a special case of the joins described in Section \ref{redholosect}; however, it follows from Proposition 3.8 (2) of \cite{BoTo20b} that the cone indecomposable fiber joins have irreducible $U(n)$ transverse holonomy. Nevertheless, in either case we have

\begin{corollary}\label{fiberjoincor}
Let $N$ be a smooth projective algebraic variety with $b_1(N)=0$ and integral K\"ahler forms $\gro_j$ and let $M=M_1\star_f\cdots\star_f M_{d+1}$ be a fiber join with its spherical Sasaki subcone $\gt^+_{sph}$ of Sasakian structures on $M$. Assume also that the Hodge number $h^{0,2}(N)$ vanishes. Then every Sasakian structure in $\gt^+_{sph}$ is $\cals$-stable.
\end{corollary}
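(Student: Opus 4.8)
The plan is to reduce Corollary \ref{fiberjoincor} to Theorem \ref{no02thm} by verifying, for a fiber join $M = M_1\star_f\cdots\star_f M_{d+1}$, that $b_1(M)=0$ and that the basic Hodge number $h^{0,2}_B=h^{2,0}_B$ vanishes for the relevant Sasakian structures. Just as in the proof of Corollary \ref{stablejoin}, the first step is to invoke Theorem 4.5 of \cite{GoNoTo12}, so that these basic Hodge numbers depend only on the underlying CR structure; hence it suffices to check them for one convenient Reeb vector field in $\gt^+_{sph}$, namely a quasiregular one. For a quasiregular choice the quotient orbifold is a fibration over $N$ with fiber a weighted projective space $\bbc\bbp(1,\ldots,1)$ (or a cyclic quotient thereof coming from the spherical Sasaki data), and we can use Proposition \ref{folLiegrpd} to pass to the orbifold and compute there.

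Next I would compute the relevant cohomology of the quotient orbifold $Z = M/S^1$. Since $Z$ is (the orbifold total space of) a projectivized vector bundle $\bbp(\oplus_{j=1}^{d+1}L_j^*)$ over $N$, its Hodge cohomology is governed by a projective bundle formula: $H^{p,q}(Z)\cong \bigoplus_{i} H^{p-i,q-i}(N)$ for $0\le i\le d$, exactly as in the smooth Leray--Hirsch case, and this extends to the orbifold setting because the fiber has no odd cohomology and no $(p,q)$-classes off the diagonal. In particular $H^{0,2}(Z)\cong H^{0,2}(N)$ and $H^{1,0}(Z)\cong H^{1,0}(N)$, and likewise for the conjugates. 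By hypothesis $h^{0,2}(N)=0$, and $b_1(N)=0$ forces $h^{1,0}(N)=h^{0,1}(N)=0$ (for a smooth projective variety this is Hodge theory). Therefore $h^{0,2}(Z)=h^{2,0}(Z)=0$ and $b_1(Z)=0$, which translates into $h^{0,2}_B(M)=h^{2,0}_B(M)=0$ via the identification of basic cohomology of the transverse K\"ahler flow with orbifold cohomology of $Z$ (Proposition \ref{folLiegrpd}). It remains to see $b_1(M)=0$: the Gysin/homotopy sequence of the orbibundle $S^1\to M\to Z$ gives $b_1(M)\le b_1(Z)=0$ since the Euler class $[d\eta]_B$ is nonzero (the transverse K\"ahler flow is nontrivial), so $b_1(M)=0$. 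Then Theorem \ref{no02thm} applies directly to the quasiregular structure, and by the CR-invariance of the basic Hodge numbers and the path-connectedness of $\gt^+_{sph}$ (cf. Proposition \ref{h20CR}), $\cals$-stability propagates to every Sasakian structure in $\gt^+_{sph}$.

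The main obstacle I expect is the projective bundle formula in the orbifold category: one must be careful that the fiber join's quotient is genuinely a (possibly weighted) projectivized bundle over $N$ and that Leray--Hirsch / the Hodge-theoretic bundle decomposition hold with orbifold coefficients, including the behavior of the cyclic isotropy introduced by the spherical Sasaki cone. In the cone decomposable case this is already covered by Corollary \ref{stablejoin} (it reduces to an iterated ordinary join), so the real content is the cone indecomposable case, where Proposition 3.8(2) of \cite{BoTo20b} tells us the transverse holonomy is irreducible $U(n)$; there one leans on the explicit Yamazaki description of $M$ as a sphere bundle to identify the quotient orbifold and run the bundle computation. Everything else is a routine bookkeeping of Hodge-K\"unneth-type arguments already used in the proof of Corollary \ref{stablejoin}.
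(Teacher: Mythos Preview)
Your proposal is correct and follows essentially the same route as the paper: reduce to Theorem \ref{no02thm} by computing $h^{0,2}$ of the quotient via a Leray--Hirsch/projective-bundle argument, after invoking \cite{GoNoTo12} so that only one Reeb field need be checked.

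The one place where you make life harder than necessary is in choosing a merely quasiregular Reeb field. The paper instead picks the \emph{regular} Reeb vector field in $\gt^+_{sph}$ (the one coming from the standard Hopf fibration on $S^{2d+1}$), so the quotient is literally the smooth projective bundle $\bbp(\oplus_{j=1}^{d+1}L_j^*)$ over $N$, and the ordinary Leray--Hirsch theorem applies with no orbifold caveats. This dissolves the ``main obstacle'' you flag: there is no weighted projective fiber and no cyclic isotropy to worry about. Your extra care in checking $b_1(M)=0$ via the Gysin sequence is a detail the paper leaves implicit.
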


\begin{proof}
Again since the basic Hodge numbers depend only on the underlying CR structure \cite{GoNoTo12}, we can choose the regular Reeb vector field $\xi$ in $\gt^+_{sph}$. 
In this case the transverse holomorphic structure is isomorphic to the complex structure of the quotient manifold, namely the projectivization $\bbp(\oplus_{j=1}^{d+1}L_j^*)$ which is the total space of a $\bbc\bbp^d$-bundle over $N$ (\cite{BoTo20b}, Section 3.3). The cohomology ring of such projective bundles is well known (\cite{BoTu82}, page 270) to be $H^*(\bbp(\oplus_{j=1}^{d+1}L_j^*))=$
$$H^*(N)[x]/\bigl(x^{d+1}+e_1(c_1(L_1),\ldots,c_1(L_{d+1}))x^d+\cdots +e_{d+1}(c_1(L_1),\ldots,c_1(L_{d+1}))\bigr)$$
where $e_i$ denotes the ith elementary symmetric function, and $x$ is a global generator of the cohomology of $\bbp(\oplus_{j=1}^{d+1}L_j^*)$ which when restricted to each fiber generates the cohomology of $\bbc\bbp^d$. Now the class $x$ is represented by a $(1,1)$ form on $\bbp(\oplus_{j=1}^{d+1}L_j^*)$. So the only nonvanishing element of $H^{0,2}(\bbp(\oplus_{j=1}^{d+1}L_j^*))$ can come from an element of $H^{0,2}(N)$. But this clearly implies $h^{0,2}(\bbp(\oplus_{j=1}^{d+1}L_j^*))=h^{0,2}(N)$, so the corollary follows from Theorem \ref{no02thm}.
\end{proof}


\def\cprime{$'$} \def\cprime{$'$} \def\cprime{$'$} \def\cprime{$'$}
  \def\cprime{$'$} \def\cprime{$'$} \def\cprime{$'$} \def\cprime{$'$}
  \def\cdprime{$''$} \def\cprime{$'$} \def\cprime{$'$} \def\cprime{$'$}
  \def\cprime{$'$}
\providecommand{\bysame}{\leavevmode\hbox to3em{\hrulefill}\thinspace}
\providecommand{\MR}{\relax\ifhmode\unskip\space\fi MR }
\providecommand{\MRhref}[2]{%
  \href{http://www.ams.org/mathscinet-getitem?mr=#1}{#2}
}
\providecommand{\href}[2]{#2}

\end{document}